\newtheorem{The}{Theorem}[section]
\newtheorem{Lem}[The]{Lemma}
\newtheorem{Pro}[The]{Proposition}
\newtheorem{Cor}[The]{Corollary}
\theoremstyle{definition}
\theoremstyle{remark}
\newtheorem{remark}[The]{Remark}
\numberwithin{equation}{section}
\def\a1s{a_1,\cdots, a_s}
\def\a{\alpha}
\def\b{\beta}
\def\bl4{B_{\ell\geq4}}
\def\d{\delta}
\def\e{\epsilon}
\def\fk{\mathfrak{k}}
\def\lam{\lambda}
\def\fm{(\cdot,\cdot)}
\def\1k{\frac{1}{k}}
\def\op{\oplus}
\def\sub{\subseteq}
\def\bbbz{{\mathbb Z}}
\def\1il{1\leq i\leq\ell}
\renewcommand{\hat}{\widehat}
\newcommand{\summ}[1]{\raisebox{0.1ex}{\scalebox{1}{$\displaystyle \sum_{#1}\;$}}}
\newcommand{\Bigop}[1]{\raisebox{0.2ex}{\scalebox{.8}{$\displaystyle \bigoplus_{#1}\;$}}}
\begin{document}


\title{On results of certain modules over untwisted affine Lie superalgebras}

\author{Asghar Daneshvar$^*$}
\author{Hajar Kiamehr$^\star$}
\author{Maryam Yazdanifar$^\star$}
\author{Malihe Yousofzadeh$^{*,\star}$}

\address {$^{*}$School of Mathematics, Institute for Research in Fundamental Sciences (IPM), P.O. Box: 19395-5746, Tehran, Iran.}
\address{$^\star$Department of Pure Mathematics, Faculty of Mathematics and Statistics, University of Isfahan, Isfahan, P.O. Box 81746-73441, Iran.}

\email{a.daneshvar@ipm.ir}
\email{hkiamehr@sci.ui.ac.ir}
\email{yazdanifar.maryam@yahoo.com}
\email{ma.yousofzadeh@sci.ui.ac.ir, ma.yousofzadeh@ipm.ir.}


\subjclass[2020]{17A70, 17B10
, 17B65}

\begin{abstract}
Since 2020, finite weight modules have been studied over twisted affine Lie superalgebras. To complete the characterization of modules over affine Lie superalgebras, we need some information regarding modules over untwisted affine Lie superalgebras. There are several known results on representations of twisted affine Lie superalgebras that hold for untwisted cases, and their proofs are just a minor modification of the known ones.  In this note, we gather these results for our further use.
\end{abstract}

\maketitle


\newcommand\sfrac[2]{{#1/#2}}

\newcommand\cont{\operatorname{cont}}
\newcommand\diff{\operatorname{diff}}


\section{\bf Introduction}
In \cite{tight, Yousofzadeh:Finite weight 2020}, the author started the characterization of simple finite weight modules over twisted affine Lie superalgebras. She introduced two types of modules, hybrid and tight, depending on the action of root vectors corresponding to real roots.  In light of the characterization of hybrid modules that was done in \cite{Yousofzadeh:Finite weight 2020}, many results have been produced for twisted affine Lie superalgbras and modules over them. The main purpose of this note is to show that some of the observations made in \cite{Yousofzadeh:Finite weight 2020} can be expressed with minor modifications for the untwisted affine Lie superalgebras. 

Assume that $\mathfrak {L}$ is an affine Lie superalgebra
with the standard Cartan subalgebra $\frak H$. With respect to 
the canonical bilinear form on $\frak H^*$ (the dual space of $\mathfrak H$), we can divide the root system $R$ of $\mathfrak {L}$ into three kinds of roots: 
\begin{itemize}
    \item real roots: roots which are not self-orthogonal, 
    \item imaginary roots: roots which are orthogonal to all roots,
    \item nonsingular roots: neither real nor imaginary.
\end{itemize}
The action of root vectors corresponding to real roots on a simple finite weight $\frak L$-module $M$ has a determining role in studying the representation theory of affine Lie superalgebras. These actions on $M$ are performed in two types: injectively or locally nilpotently. Hence, we denote by $R^{in}$ (resp. $R^{ln}$) the set of all (nonzero) real root $\a$ whose corresponding nonzero root vectors act on $M$ injectively (resp. locally nilpotently).

In Section 3, we show that (see Theorem \ref{3.9}):

\begin{The}
	Suppose that $M$ is a simple finite weight module over untwisted affine Lie superalgebra  $\mathfrak L$. Then 
	for each $\beta \in R_{re}$, one of the following will happen:
	\begin{itemize}
		\item [(i)]  $\beta$ is full-locally nilpotent, equivalently $\beta +\mathbb Z\delta \subseteq R^{ln}$.
		\item [(ii)] $\beta$ is full-injective, equivalently $\beta +\mathbb Z\delta \subseteq R^{in}$.
		\item [(iii)] $\beta$ is down-nilpotent hybrid, equivalently there exist $m\in \mathbb Z$ and $t\in \{-1,0,1\}$ such that for $\gamma:=\beta +m\delta$, 
		\begin{align*}
			&\gamma +\mathbb Z^{\geq 1}\delta\subseteq R^{in},~~\gamma +\mathbb Z^{\leq 0}\delta\subseteq R^{ln}\\
			&-\gamma +\mathbb Z^{\geq t}\delta\subseteq R^{in},~~-\gamma +\mathbb Z^{\leq t-1}\delta\subseteq R^{ln}
		\end{align*}
		\item [(iv)]$\beta$ is up-nilpotent hybrid, equivalently there exist $m\in \mathbb Z$ and $t\in \{-1,0,1\}$ such that for $\eta:=\beta +m\delta$, 
		\begin{align*}
			&\eta +\mathbb Z^{\leq -1}\delta\subseteq R^{in},~~	\eta +\mathbb Z^{\geq 0}\delta\subseteq R^{ln}\\
			&-\eta +\mathbb Z^{\leq -t}\delta\subseteq R^{in},~~-\eta +\mathbb Z^{\geq 1-t}\delta\subseteq R^{ln}.
		\end{align*}
	\end{itemize}
\end{The}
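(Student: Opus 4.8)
The plan is to reduce the assertion to a coherence statement about the single imaginary root string $\beta+\mathbb{Z}\delta$ together with its negative, and then to analyse that string by means of the $\mathfrak{sl}_2$-type (in the super case, $\mathfrak{osp}(1|2)$-type) subalgebras that $\mathfrak{L}$ carries along it. I would first record the dichotomy underlying the very definition of $R^{in}$ and $R^{ln}$: since $M$ is simple and finite weight, a nonzero root vector attached to a real root acts either injectively or locally nilpotently, so $R_{re}=R^{in}\sqcup R^{ln}$. Because $\mathfrak{L}$ is untwisted, the entire string $\beta+\mathbb{Z}\delta$ consists of real roots, and the theorem amounts to describing the two subsets
\[
A:=\{\,n\in\mathbb{Z}:\beta+n\delta\in R^{ln}\,\},\qquad A':=\{\,n\in\mathbb{Z}:-\beta+n\delta\in R^{ln}\,\}.
\]

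The engine of the argument is that the imaginary root vectors shift root vectors along the string. In the loop realization $x_{\beta+n\delta}$ is proportional to $\dot{x}\otimes t^{n+k}$ for a fixed root vector $\dot{x}$ of the underlying basic Lie superalgebra, and $[\,h\otimes t,\,\dot{x}\otimes t^{m}\,]=\dot\beta(h)\,\dot{x}\otimes t^{m+1}$ with $\dot\beta(h_{\dot\beta})\neq0$ because $\beta$ is anisotropic. Using this shift together with the standard $\mathfrak{sl}_2$-string estimates applied to weight vectors of $M$, I would prove the central \emph{coherence lemma}: $A$ is one of $\emptyset$, $\mathbb{Z}$, a lower ray $\mathbb{Z}^{\le k}$, or an upper ray $\mathbb{Z}^{\ge k}$; equivalently, local nilpotency cannot oscillate along the string and at most one switch between $R^{ln}$ and $R^{in}$ occurs. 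It is precisely the finite-weight hypothesis that forces this, since it bounds the lengths of the relevant weight strings and thereby makes the nilpotency cascade in one direction once it has begun. I expect this lemma to be the main obstacle, and the point at which the super features (anisotropic versus isotropic behaviour, and the separation of level-zero modules from those of nonzero level) must be treated exactly as in the twisted case of \cite{Yousofzadeh:Finite weight 2020}.

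Granting the coherence lemma, cases (i) and (ii) are the two degenerate possibilities $A=\mathbb{Z}$ and $A=\emptyset$. When $A$ is a proper ray the string has a single switch, whose direction separates the down-nilpotent pattern ($A=\mathbb{Z}^{\le k}$, locally nilpotent below and injective above) from the up-nilpotent pattern ($A=\mathbb{Z}^{\ge k}$); replacing $\beta$ by a suitable $\gamma=\beta+m\delta$ (resp.\ $\eta=\beta+m\delta$) normalizes the switch to sit between heights $0$ and $1$ (resp.\ $-1$ and $0$), which is the first displayed line of (iii) (resp.\ (iv)). It remains to locate $A'$ relative to $A$. For this I would exploit the matched pairs $(x_{\gamma+n\delta},x_{-\gamma-n\delta})$ with $[x_{\gamma+n\delta},x_{-\gamma-n\delta}]\in\mathfrak{H}$: the injective and locally nilpotent statuses of the two halves of such a pair cannot be prescribed independently, and a short analysis on weight vectors shows that the switch of the negative string is forced to be the reflection of the switch of the positive string, up to a shift by at most one step. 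This unavoidable one-step slack is exactly the parameter $t\in\{-1,0,1\}$, and matching it against the normalized positive-string inequalities produces the second displayed lines of (iii) and (iv), completing the classification.
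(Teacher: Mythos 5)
Your proposal has the right skeleton (the statement itself already tells you the answer is ``rays with one switch, matched up to a shift $t$''), but both of its load-bearing steps are asserted rather than proved, and the mechanisms you propose for them are not ones that can be made to work. The missing idea is the \emph{shadow} property (Proposition \ref{3.6} of the paper): for a simple finite weight module, $R^{ln}=\frak B_M\cap R_{re}$ and $R^{in}=\frak C_M\cap R_{re}$, i.e.\ local nilpotency and injectivity of root vectors are \emph{equivalent to support-combinatorial conditions}. This translation is what makes the problem additive: one then uses that $\frak C_M$ is closed under sums (Lemma \ref{3.2}(ii)), that reflections $r_\a$ preserve $R^{in}$ when $\pm\a\in R^{ln}$ or $\pm\a\in R^{in}$ (Lemma \ref{3.7}(ii)), and the nontrivial closure property $(R^{ln}+R^{ln})\cap R_{re}\subseteq R^{ln}$, $(R^{ln}+2R^{ln})\cap R_{re}\subseteq R^{ln}$ (Theorem \ref{3.8}). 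None of this appears in your plan. Your proposed substitute for the coherence lemma --- ``standard $\mathfrak{sl}_2$-string estimates'' plus the loop-algebra shift $[h\otimes t,\dot x\otimes t^m]=\dot\beta(h)\,\dot x\otimes t^{m+1}$ --- does not deliver it: the vectors $x_{\beta+n\delta}$ and $x_{\beta+(n+1)\delta}$ do not sit in a common $\mathfrak{sl}_2$ acting on $M$, and a bracket relation between two root vectors does not transport local nilpotency of their module actions from one to the other.

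There is also a structural problem with your two-stage decoupling (first prove coherence of $A=\{n:\beta+n\delta\in R^{ln}\}$ in isolation, then match $A'$ to $A$). In the actual proof, coherence of the positive string \emph{cannot} be established without simultaneously controlling the negative string: for instance, after normalizing to $\gamma\in R^{ln}$, $\gamma+\delta\in R^{in}$, the step $\pm\gamma+2n\delta\in R^{in}$ is obtained from decompositions such as
\begin{equation*}
\pm\gamma+2n\delta=(\pm\gamma+2\delta)+(n-1)(\gamma+\delta)+(n-1)(-\gamma+\delta),
\end{equation*}
which requires knowing that both $\gamma+\delta$ and $-\gamma+\delta$ lie in $R^{in}\subseteq\frak C_M$; and the whole argument is organized as a case analysis on the pair $(-\gamma,-\gamma-\delta)$, beginning with the exclusion of the pattern $-\gamma\in R^{ln}$, $-\gamma-\delta\in R^{in}$. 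So the ``coherence'' and the ``matching'' are forced jointly, not sequentially. Finally, your second stage --- that matched pairs $(x_{\gamma+n\delta},x_{-\gamma-n\delta})$ with bracket in $\frak H$ force the negative switch to mirror the positive one up to one step --- is precisely the content of the theorem (it is where $t\in\{-1,0,1\}$ comes from), and you give no argument for it; in the paper this is the bulk of the proof, a page-long case analysis (the three admissible patterns for $(-\gamma,-\gamma-\delta)$ yielding $t=1,-1,0$). As it stands, the proposal restates what must be proven while omitting the machinery that proves it.
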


\section{\bf Untwisted affine Lie superalgebras and their root systems}

In this short section, we present some of the basic definitions and notations regarding affine Lie superalgebras. In the whole of this note, we assume that the ground field is the complex numbers $\Bbb C$.

\subsection*{Lie superalgebras}

Consider a $\Bbb Z_2$-graded vector space $V=V_0\oplus V_1$ (super vector space) over $\Bbb C$. 
The nonzero elements of $V_0\cup V_1$ are called {\it homogeneous}. For a homogeneous element $v\in V_i$, we denote by $\bar v:=i$ (or ${\rm deg} (v):=i$) the degree of $v$. We allow below that if $\bar v$ is visible in any expression, then $v$ is supposed to be a homogeneous element.

A {\it Lie superalgebra} is a $\Bbb Z_2$-graded vector space $\frak g=\frak g_0\oplus \frak g_1$ over $\Bbb C$ whose Lie superbracket $[-, -]$ (a bilinear map from $\frak g \times \frak g$ to $\frak g$) satisfies the conditions:
\begin{itemize}
\item $[\frak g_i,\frak g_j]\subseteq \frak g_{i+j}$ for $i,j\in\Bbb Z_2$,
    \item Super skew-symmetry: 
    $[x,y]=-(-1)^{\bar x \bar y}[y,x]$,
    \item The super Jacobi identity:
 $  (-1)^{\bar x\bar z}[x,[y,z]]+(-1)^{\bar y \bar x}[y,[z,x]]+(-1)^{\bar z\bar y}[z,[x,y]]=0$.
\end{itemize}
A Lie superalgebra  $\frak g=\frak g_0\oplus \frak g_1$ is called {\it simple} if 
$\frak g_0\neq 0\neq \frak g_1$ and $[\frak g_0, \frak g_1]=\frak g_1$.
The simple Lie superalgebra $\frak g$ is said to be {\it classical simple} if $\frak g_0$ is reductive\footnote{Note that the even part ${\mathfrak {g}}_{0}$ is an ordinary Lie algebra.}. A classical simple Lie superalgebra $\frak g$ is called {\it basic} if $\frak g$ is equipped with an even nondegenerate invariant bilinear form\footnote{A bilinear form $(-,-) : \frak g \times \frak g\longrightarrow \Bbb C$ is called 
\begin{itemize}
    \item {\it invariant} if $([x,y],z)=(x,[y,z])$, for all $x,y,z\in \frak g$.
    \item {\it even} if $(\frak g_0, \frak g_1)=0$,
    \item {\it supersymetric} if $(x,y)=(-1)^{\bar x\bar y}(y,x)$.
\end{itemize}}. In \cite{Kac}, Victor Kac classifies all finite dimensional simple Lie superalgebras, and he shows that a finite dimensional basic classical simple Lie superalgebra (FDBCSL, for short) which is not a simple Lie algebra is isomorphic to one of the following:
\begin{align*}
    &\frak{sl}(m+1,n+1)~~ {\rm in~ which} ~~m>n\geq 0 ~~{\rm (Type~} A(m,n){\rm)},\\
    &\frak{psl}(n+1,n+1)~~ {\rm in~ which}~~n\geq 1 ~~{\rm (Type~} A(n,n){\rm)},\\
     &\frak{osp}(2m+1,2n)~~ {\rm in~ which}~~m\geq 0,~ n>0 ~~{\rm (Type~} B(m,n){\rm)},\\
      &\frak{osp}(2,2n-2)~~ {\rm in~ which}~~n\geq 2 ~~{\rm (Type~} C(n){\rm)},\\
       &\frak{osp}(2m,2n)~~ {\rm in~ which} ~~m\geq 2, n\geq 1 ~~{\rm (Type~} D(m,n){\rm)},\\
        &\Gamma(1,-1-\lam,\lam)~~ {\rm in~ which}~~ \lambda\neq 0,-1 ~~{\rm (Type~} D(2,1;\lam){\rm)},\\
        &G(3)~~ {\rm the~ exceptional~ Lie~ superalgebra~ of~ dimension} ~31,\\
        &F(4)~~{\rm the~ exceptional~ Lie~ superalgebra~ of~ dimension} ~40.
\end{align*}
We note that if $m=n$, then the elements $\Bbb C {\Bbb I}_{n+1,n+1}$, complex multiples of the identity matrix, are central in
 $\frak{sl}(n+1,n+1)$,  and so $\frak{psl}(n+1,n+1):= \frak{sl}(n+1,n+1)/ \Bbb C {\Bbb I}_{n+1,n+1}$ is simple.
The underlying even part of the finite dimensional basic classical simple Lie superalgebrs, Lie algebra, is presented in Table \ref{Table 1}.

\begin{table}[h!]
 \caption{The even part of FDBCSL}
\centering
 \begin{tabular}{||c|c ||c|c||} 
 \hline
 Type & $\frak g_{0}$ & Type & $\frak g_{0}$ \\ [0.5ex] 
 \hline\hline
$A(m,n)$,~
   ($m\neq n$) & $A_m\oplus A_n\oplus \Bbb C$& $D(m,n)$, ($m\neq 1$) & $D_m\oplus C_n$\\
    \hline
   $A(n,n)$ & $A_n\oplus A_n$&$F(4)$ & $A_1\oplus B_3$ \\
 \hline
$B(m,n)$ & $B_m\oplus C_n$&$G(3)$ & $A_1\oplus G_2$\\
 \hline
 $C(n)$ & $C_{n-1}\oplus \Bbb C$&$D(2,1;\lam)$ & $A_1\oplus A_1\oplus A_1$ \\
 \hline
 \end{tabular}
 \label{Table 1}
\end{table}

Let $\frak h_0$
be a subalgebra of $\frak g_0$ and denote its dual space by $ \mathfrak{h}^{\ast}$. Then, for $\a\in \frak h_0^*$, set
$$\frak {g}_\a:=\{x\in \frak g\mid [h,x]=\a(h)x~{\rm for~ all~}h\in \frak h_0^*\}.$$
We say $\frak g$ has a {\it root
space decomposition} with respect to $\frak h_0$ if
$$\frak g =\bigoplus _{\a \in \frak h_0^*}\frak g_\a,$$
and 
$$R:=\{\a\in \frak h_0^* \mid \a\neq 0,~ \frak g_\a \neq 0\}$$
is the set of {\it roots} of $\frak g$ ({\it root system of} $\frak g$)\footnote{This definition of root systems stems from semisimple Lie algebras.} An element of $\frak g_\a$ is called a {\it root vector} corresponding to $\a$.
For instance, if $\frak g$ is a finite dimensional basic classical simple Lie superalgebra
with the standard Cartan subalgebra $\frak h_0\subseteq \frak g_0$, then we can conclude that
$$\frak g =\frak g _0\oplus \bigoplus _{\a \in R}\frak g_\a,$$
 the root space decomposition, where $\frak g_0$ is the centralizer of $\frak h_0$ in $\frak g$.
 
 Suppose that $\frak g$ 
 is equipped with a nondegenerate even supersymmetric bilinear form $(-,-)$, $\frak h_0$ is finite
dimensional and the form $(-,-)$ is nondegenerate on $\frak h_0$.
Then, the form induces a supersymmetric nondegenerate bilinear form on $\frak h_0^*$ which is denoted again by $(-, -)$. Hence, if $\frak g$ has the root system $R$ and $S\subseteq R$, then we set
\begin{align*}
S_{im}&:=\{\a\in S\mid (\a,S)=\{0\}\}, \quad
 S^{\times} := S\setminus S_{im}, \\
 S_{re} &:= \lbrace \alpha \in S  \mid  (\alpha , \alpha) \neq 0 \rbrace, \qquad 
S_{ns} := \lbrace \alpha \in S^{\times} \mid   (\alpha , \alpha) = 0 \rbrace.
\end{align*}
This implies that $R$ is divided into three parts, $ R=R_{im} \cup R_{re} \cup R_{ns}$:
\begin{itemize}
\item {\it imaginary (isotropic) roots}, i.e., those roots which are orthogonal to all other roots,
\item nonzero {\it real roots}, i.e., those roots which are not self-orthogonal and
\item nonzero {\it nonsingular roots}, i.e., those roots which are neither real nor imaginary.
\end{itemize}
The root system satisfies some properties that lead us to the following:\\
Suppose that $V$ is a vector space with a nondegenerate supersymmetric bilinear form $(-,-)$ and $T\subseteq V$  such that 
\begin{itemize}
    \item [$\blacksquare$] ${\rm span}_\Bbb C(T)=V$ and $T$ is a finite set containing zero,
    \item [$\blacksquare$] $T=-T$,
    \item [$\blacksquare$] if $\a\in T_{re}$ and $\b\in T$, then the number 
$\langle \beta ,\alpha \rangle :=2{\frac {(\alpha ,\beta )}{(\alpha ,\alpha )}}$ is an integer.
    \item [$\blacksquare$] for any $\alpha \in T_{re}$ and $\beta \in T$, there are $p,q\in \Bbb Z^{\geq 0}$ such that $\langle \beta ,\alpha \rangle=p-q$ and 
    $$\{\b+k\a \mid k\in \Bbb Z\}\cap T=\{\b-p\a, \cdots , \b+q\a\},$$
    \item [$\blacksquare$] for $\a\in T_{ns}$ and $\b \in T$ with $(\a,\b)\neq 0$, 
    $\{\b-\a, \b+\a\}\cap T\neq \emptyset$.
   \end{itemize}
Any collection of vectors having these properties is called a {\it finite root supersystem}.
A finite root supersystem $R$ is said to be  {\it irreducible} if $R^\times$ cannot be written as a disjoint union of two nonempty orthogonal subsets.
More details on finite root super system, suppose that $ U $ is a complex vector space with a basis $ \lbrace \dot\gamma_{1}, \dot\gamma_{2}, \dot\gamma_{3} \rbrace. $ For $ \lambda \in \mathbb{C} \backslash  \lbrace 0, -1 \rbrace, $ define the symmetric nondegenerate bilinear form $ (- , -) $ on $ U $ by the linear extension 
\begin{equation}\label{2.0}
 (\dot\gamma_{1},\dot\gamma_{1}):= \lambda, ~ (\dot\gamma_{2},\dot\gamma_{2}):= -1 - \lambda, ~ (\dot\gamma_{3},\dot\gamma_{3}):= 1, ~ (\dot\gamma_{i},\dot\gamma_{j}):=  0 ~ ( 1 \leq i \neq j \leq 3 ). 
\end{equation}
Define  
\begin{equation}
\label{D(2,1;l)}
D(2, 1; \lambda ) := \lbrace 0, \pm 2 \dot\gamma_{i}, \pm \dot\gamma_{1} \pm \dot\gamma_{2} \pm \dot\gamma_{3} \mid 1 \leq i \leq 3 \rbrace. 
\end{equation}
Next, suppose that $ m $ and $ n $ are two nonnegative integers with $ m+n \geq 1 $ and $ F $ is a complex vector space with a basis  $ \lbrace \dot\epsilon_{i}, \dot\delta_{j} \mid 1 \leq i \leq m, 1 \leq j \leq n \rbrace. $ Define a symmetric bilinear form $ (-,-) : F \times F \longrightarrow \mathbb{C} $ with 
\begin{equation} \label{norm}
(\dot\epsilon_{i},\dot\epsilon_{r}):= \delta_{i,r}, ~~ (\dot\delta_{j},\dot\delta_{s}):=-\delta_{j,s} ~~{\rm and} ~~(\dot\epsilon_{i},\dot\delta_{j})=0,
\end{equation}
where $ 1 \leq i , r \leq m$, $1 \leq j, s \leq n$, and $\delta_{-,-}$ denotes the Kronecker delta. Set
\begin{align}
\label{s(m,n)}
 \frak{s} (m-1 , m-1) := \pm \lbrace \dot\epsilon_{i} - \dot\epsilon_{r}, \dot\delta_{i} - \dot\delta_{r}, \dot\epsilon_{i} - \dot\delta_{r} \mid 1 \leq i, r \leq m \rbrace ~ (n=m \in \mathbb{Z}^{\geq 2}) 
\end{align}
and
 \begin{align}\label{all}
 	\begin{split}
&A(m-1,m-1) := \pm \lbrace \dot\epsilon_{i} - \dot\epsilon_{r}, \dot\delta_{i} - \dot\delta_{r}, \dot\epsilon_{i} - \dot\delta_{r} - \dfrac{1}{m} \sum_{k=1} ^{m} (\dot\epsilon_{k} - \dot\delta_{k}) \mid 1 \leq i , r \leq m \rbrace ; (n=m \in \mathbb{Z}^{\geq 2}) \\ 
 &A(m-1,n-1) := \pm \lbrace \dot\epsilon_{i} - \dot\epsilon_{r}, \dot\delta_{j} - \dot\delta_{s}, \dot\epsilon_{i} - \dot\delta_{j} \mid 1 \leq i , r \leq m , 1 \leq j , s \leq n \rbrace ; ~ (n \neq m),  
\\ 
 &B(m,n) := \pm \lbrace \dot\epsilon_{i}, \dot\delta_{j}, \dot\epsilon_{i} \pm \dot\epsilon_{r}, \dot\delta_{j} \pm \delta_{s}, \dot\epsilon_{i} \pm \dot\delta_{j} \mid 1 \leq i,r \leq m, ~ 1 \leq j,s \leq n, i \neq r \rbrace, 
\\  
 &C(m,n) := \pm \lbrace \dot\epsilon_{i} \pm \dot\epsilon_{r}, \dot\delta_{j} \pm \dot\delta_{s}, \dot\epsilon_{i} \pm \dot\delta_{j} \mid 1 \leq i,r \leq m, 1 \leq j,s \leq n \rbrace,
 \\
 &D(m,n) := \pm \lbrace \dot\epsilon_{i} \pm \dot\epsilon_{r}, \dot\delta_{j} \pm \dot\delta_{s}, \dot\epsilon_{i} \pm \dot\delta_{j} \mid 1 \leq i,r \leq m, 1 \leq j,s \leq n,  i \neq r \rbrace,
 \\
 &BC(m,n) := \pm \lbrace \dot\epsilon_{i}, \dot\delta_{j}, \dot\epsilon_{i} \pm \dot\epsilon_{r}, \dot\delta_{j} \pm \dot\delta_{s}, \dot\epsilon_{i} \pm \dot\delta_{j} ~ \vert ~ 1 \leq i,r \leq m, ~ 1 \leq j,s \leq n \rbrace,
  \\
 &F(4) := \pm \lbrace 0, \dot\epsilon, \dot\delta_{i} \pm \dot\delta_{j}, \dot\delta_{i}, \frac{1}{2} (\dot\epsilon \pm \dot\delta_{1} \pm \dot\delta_{2}\pm \dot\delta_{3} ) ~ \vert ~ 1 \leq i \neq j \leq 3  \rbrace ;  (\dot\epsilon := \sqrt{3} \dot\epsilon_{1}, ~ m=1, ~ n=3)
 \\
 &G(3) := \pm \lbrace 0, \dot\nu , 2 \dot\nu , \dot\epsilon_{i} - \dot\epsilon_{j}, 2 \dot\epsilon_{i} - \dot\epsilon_{j} - \dot\epsilon_{t} , \dot\nu \pm (\dot\epsilon_{i} - \dot\epsilon_{j}) ~ \vert ~ \lbrace i,j,t \rbrace = \lbrace 1,2,3 \rbrace \rbrace ; (\nu := \sqrt{2} \delta_{1}, m=3, n=1),
\end{split}
 \end{align} 
in which if $ m $ or $ n $ is zero, the corresponding phrases disappear. The sets, introduced in (\ref{D(2,1;l)}) and (\ref{all}), are irreducible finite root supersystems in their linear spans. We draw the attention of readers to the point that $ \frak{s} (m-1 , m-1) $ is not an irreducible finite root supersystem, in fact, the form on the linear span of $ \frak{s} (m-1 , m-1) $ is not nondegenerate.
It is known that each irreducible finite root supersystem is either an irreducible finite root system\footnote{A subset $T$ of $\Bbb R^n$ is called a {\it finite root system} if
\begin{itemize}
    \item $T$ is finite, $0\in T$, and spans $\Bbb R^n$, 
    \item if $0\neq \a\in T$, then $-\a \in T$ and $\pm \a$ are the only multiples of $\a$ in $T$,
    \item for any two elements $\a, \b \in T$ with $\a\neq 0$, we have $r_\a(\b):=\b-{2\dfrac{(\b,\a)}{(\a,\a)}}\b\in T$ where $(-,-)$ is the form on $\Bbb R^n$,
    \item for any two elements $\a, \b \in T$ with $\a \neq 0$,  $<\b,\a>:={2\dfrac{(\b,\a)}{(\a,\a)}}$ is an integer.
\end{itemize}
A finite root system is said to be {\it irreducible} if it cannot be written as a disjoint union of two nonempty orthogonal subsets. The classification theorem of the finite root system can be found in \cite{Hum}.} or one of the irreducible finite root supersystems introduced in (\ref{D(2,1;l)}) and (\ref{all}).
We note that, in the literature, $ D(1,n) $ is denoted by $ C(n+1)$.
It should be noted that apart from types $BC(m,n)$ ($m+n\neq 0$) and $C(m,n)$ ($m,n>0$), the rest appear as
the root systems of basic classical simple Lie superalgebras.

\subsection*{Affine Lie superalgebras}
 Affine Lie superalgebras introduced by Johan van de Leur in 1986, see \cite{van-thes}. He constructed twisted and untwisted affine Lie superalgebras and showed that twisted and untwisted affine Lie superalgebras cover all affine Lie superalgebras.

 Affine Lie superalgebras are those contragradient Lie superalgebras corresponding to nonzero indecomposable symmetrizable generalized Cartan matrices of finite growth but not finite dimension. More precisely, suppose that $ n $ is a positive integer and $ I= \lbrace 1, \cdots , n \rbrace. $ Let $ \tau $ be a subset of $ I $ and $ A $ be a nonzero indecomposable symmetrizable $n \times n$ matrix with complex entries (in which by symmetrizable, we mean that $ A $ has a decomposition $ A =DB $ with an invertible diagonal matrix $ D $ and a symmetric matrix $ B $), satisfying the following:
\begin{itemize}
\item
if $ a_{i,j} = 0, \quad $ then  $ ~ a_{j,i} = 0; $
\item
if $ a_{i,i} = 0, \quad $ then $ ~ i \in \tau; $
\item
if $ a_{i,i} \neq 0, \quad $ then $ ~ a_{i,i} = 2; $
\item
if $ a_{i,i} \neq 0, \quad $ then $  a_{i,j} ~ ({\rm resp.}~ \dfrac{a_{i,j}}{2}) $ is a nonpositive integer for $ i \in I \setminus \tau ~ ({\rm resp.}~ i \in \tau ) $ with $ i \neq j. $
\end{itemize}
Fix a complex vector space $ \mathfrak{h} $ of dimension $ n +{\rm corank}(A) $ and denote its dual space with $ \mathfrak{h}^{\ast}. $ Then there exist linearly independent subsets
\begin{center}
$ \Pi := \lbrace \alpha_{i} ~ \vert ~ i \in I \rbrace \subseteq \mathfrak{h}^{\ast} \quad $ and $ \quad \check{\Pi} := \lbrace \check{\alpha}_{i} ~ \vert ~ i \in I \rbrace \subseteq \mathfrak{h} $
\end{center}
such that 
\begin{center}
$ \alpha_{j} ( \check{\alpha} _{i} ) = a_{i,j} \qquad (i,j \in I ). $
\end{center}
Let $ \tilde{\mathcal{G}} (A , \tau ) $ be the Lie superalgebra generated by $ \lbrace e_{i}, f_{i} ~ \vert ~ i \in I \rbrace \cup \mathfrak{H} $ subject to the following relations:
\begin{align*}
[e_{i},f_{j}] &=\delta_{i,j} \check{\alpha}_{i}, \qquad [h, h^{\prime} ] = 0,
\\
[h,e_{j}] &=\alpha_{j} (h) e_{j} , \quad [h,f_{j}] = - \alpha_{j} (h) f_{j},
\\
{\rm deg}(h) &=0, \qquad \qquad {\rm deg} (e_{i}) = {\rm deg} (f_{i}) =
\begin{cases}
0 \quad if ~ i \notin \tau
\\
1 \quad if ~ i \in \tau
\end{cases}
\end{align*}
for $ i,j \in I $ and $ h,h^{\prime} \in \mathfrak{h}$. According to \cite[Theorem 2.2.3(f)]{van-thes}, there is a unique maximal ideal $ \mathfrak{i} $ of $ \tilde{\mathcal{G}} (A , \tau ) $ intersecting $ \mathfrak{h} $ trivially. 
The (quotient) Lie superalgebra $ \mathcal{G} := \tilde{\mathcal{G}} (A , \tau )/ \mathfrak{i} $ is called an {\it affine Lie superalgebra} if it is not of finite dimension but of  finite growth\footnote{Note that the Lie superalgebra $\mathcal{G}$ has a $\mathbb Z$-grading $\mathcal{G}=\bigoplus_{i\in \mathbb Z} \mathcal{G}_i$, and the notion $\lim_{n\to\infty} \ln (\sum_{i=-n}^{n} \dim (\mathcal{G}_i))/\ln n$ is called the growth of $\mathcal{G}$.}. 

As for affine Lie algebras, affine Lie superalgebras are constructed using an affinizations process: \\
Suppose that $ \mathfrak{g} := \mathfrak{g}_{0} \oplus \mathfrak{g}_{1} $ ($\neq A(n,n)$) is a finite dimensional basic classical simple Lie superalgebra with a Cartan subalgebra $ \mathfrak{h}_0 \subseteq \mathfrak{g}_{0}. $ Suppose that $ \kappa $ is a nondegenerate supersymmetric invariant even bilinear form on $ \mathfrak{g} $ and $ \sigma $ is an automorphism of $ \mathfrak{g} $ of order $ k. $ Since $ \sigma $ preserves $ \mathfrak{g}_{0} $ as well as $ \mathfrak{g}_{1}. $ We have 
\begin{center}
$ \mathfrak{g}_{i} = \bigoplus \limits_{s=0} ^{k-1}{}^{[s]} \mathfrak{g}_{i} \quad $ where $ \quad ^{[s]} \mathfrak{g}_{i} = \lbrace x \in \mathfrak{g}_{i} ~ \vert ~ \sigma (x) = \varsigma^{s} x \rbrace  \qquad (i \in \mathbb{Z}_{2} , 0 \leq s \leq k-1) $
\end{center}
in which $ \varsigma $ is the $ k$-th primitive root of unity. Then 
\begin{center}
$ \hat{\mathfrak{g}} := \hat{\mathfrak{g}_{0}} \oplus \hat{\mathfrak{g}_{1}} \quad $ where $  \quad \hat{\mathfrak{g}_{i}} = \bigoplus \limits_{s=0} ^{k-1} ( ^{[s]} \mathfrak{g}_{i} \otimes t^{s} \mathbb{C} [t^{\pm k}])  \qquad (i \in \mathbb{Z}_{2}) $
\end{center}
is a subalgebra of the current superalgebra $ \mathfrak{g} \otimes \mathbb{C} [t ^{\pm 1}]. $ Set
\begin{center}
$ \mathscr{G} = \bigoplus \limits_{s=0} ^{k-1} ( ^{[s]} \mathfrak{g} \otimes t^{s} \mathbb{C} [t^{\pm k}]) \oplus \mathbb{C} c \oplus \mathbb{C} d ~ $ and $ ~ \frak{h} := ((^{[0]} \mathfrak{g} \cap \mathfrak{h}_0 ) \otimes 1 ) \oplus \mathbb{C} c \oplus \mathbb{C} d,  $
\end{center}
then $ \mathscr{G} $ together with
\begin{center}
$ [x \otimes t^{p} + rc +sd , y \otimes t^{q} +
r^{\prime} c + s^{\prime} d ] := [x,y] \otimes t^{p+q} + p \kappa (x,y) \delta_{p + q , 0} c+ sqy \otimes t^{q} - s^{\prime} px \otimes t^{p} $
\end{center}
is an {\it affine Lie superalgebra} and $ \frak{h} $ is a Cartan subalgebra of $ \mathscr{G}. $ The Lie superalgebra $ \mathscr{G} $ is denoted by $ X^{(k)} $ where $ X $ is the type of $ \mathfrak{g} $ and $ X \neq A(n,n). $
 The lie superalgebras $ X^{(k)} $ is called {\it twisted} if $ k > 1 $ and it is called {\it untwisted} (or {\it non-twisted}) if $ k=1$. 

 For $ X = A(n,n)$, the affinizations process is a little different:\\
 Let ${\frak g}= A(n,n)$. Then we can assume that   
$${\frak g}={\frak g}_0\oplus {\frak g}_1=\frak{g}_-\oplus \frak{g}_0\oplus \frak{g}_+$$
in which
\begin{align*}
&\frak{g}_0=\left\{\begin{pmatrix}
A& O\\
O&D
\end{pmatrix}~|~A, D\in {\rm Mat}_{n+1,n+1}(\Bbb C),~{\rm tr} A={\rm tr} D\right\},\\
&\frak{g}_1=\left\{\begin{pmatrix}
O& B\\
C&O
\end{pmatrix}~|~B,C\in {\rm Mat}_{n+1,n+1}(\Bbb C)\right\},\\
&\frak{g}_-=\left\{\begin{pmatrix}
O& O\\
C&O
\end{pmatrix}~|~C\in {\rm Mat}_{n+1,n+1}(\Bbb C)\right\},\\
&\frak{g}_+=\left\{\begin{pmatrix}
O& B\\
O&O
\end{pmatrix}~|~B\in {\rm Mat}_{n+1,n+1}(\Bbb C)\right\}.
\end{align*}
Define
$$
\delta^*: {\frak g}\longrightarrow {\frak g}, ~X\mapsto \left\{ \begin {array} {cc}  X&~~ X\in \frak{g}_+ \\ 0 &~~ X\in \frak{g}_0 \\ - X &~ X\in \frak{g}_-
\end {array} \right.$$

\noindent and 
$\hat{{\frak g}}:={\frak g}\otimes \Bbb C[t,t^{-1}]$. 
Set 
\begin{align*}
&\omega : \hat{{\frak g}}\times \hat{{\frak g}} \longrightarrow \Bbb C c_1\oplus \Bbb Cc_2\\
&(X\otimes t^m, Y\otimes t^u)\mapsto \frac{1}{n+1}\delta_{m+u,0}(\delta^*(X),Y)c_1+m\delta_{m+u,0}(X,Y)c_2.
\end{align*}
and
$\bar {\frak g}:=\hat{{\frak g}}\oplus \Bbb Cc_1 \oplus \Bbb Cc_2 $.
With the following  rule 
$$[X\otimes t^m,Y\otimes t^u]_\omega:=[X,Y]\otimes t^{m+u}+\omega(X\otimes t^m,Y\otimes t^u),$$
$\bar {\frak g}$ is a Lie superalgebra. Note that for 
$x,y,z\in \hat{{\frak g}}$, we have
$$\omega ([x,y],z) +\omega ([z,x],y)+ \omega ([y,z],x)=0.$$
Now, consider
$$\frak {L}:=\bar {\frak g}\oplus \Bbb Cd_1\oplus \Bbb Cd_2=
{\frak g}\otimes \Bbb C[t,t^{-1}]\oplus \Bbb Cc_1 \oplus \Bbb Cc_2\oplus\Bbb Cd_1\oplus \Bbb Cd_2.$$
With the following relations, $\frak {L}$ is a Lie superalgebra
\begin{align*}
&[\frak {L},\Bbb Cd_1\oplus \Bbb Cd_2]=0,~~~~~~~~~~~[d_1,d_2]=0\\
&[d_1,X\otimes t^m]=2\delta^*(X)\otimes t^m, ~~ X\in {\frak g}\\
&[d_2,X\otimes t^m]=mX\otimes t^m, ~~ X\in {\frak g}\\
&[x,y]=[x,y]_\omega,~~~~~~~~~~~~~(x,y\in \bar {\frak g})
\end{align*}
and it is called the {\it nuntwisted affine Lie superalgebra of type} $A(n,n)^{(1)}$.
As we mentioned, twisted and untwisted affine Lie superalgebras cover all affine Lie superalgebras.

Next, we introduce the root system of untwisted affine Lie superalgebras\footnote{Refer to \cite{Yousofzadeh:Finite weight 2020} for the details of the root system of twisted affine Lie superalgeras.}:
\subsection*{Types $X\neq A(n,n)^{(1)}$}
Define $ \delta $ to be the functional on $\frak{h}$ vanishing on $ ((^{[0]} \mathfrak{g} \cap \mathfrak{h} ) \otimes 1) \oplus \mathbb{C} c $ and mapping $ d $ to $ 1. $ If $ \dot{R} $ is the root system of the finite dimensional basic classical simple Lie superalgebra $ \mathfrak{g}, $ introduced in (\ref{D(2,1;l)}) and (\ref{all}), $R:=\dot{R} + \mathbb{Z} \delta $ is the root system of $ \mathscr{G} = X^{(1)}. $

Using the form on the finite dimensional basic classical simple Lie superalgebra  $ \mathfrak{g}$ (see (\ref{2.0}) and (\ref{norm})), we can get a nondegenerate supersymmetric invariant even bilinear form on $ \mathscr{G} $ which, in turn, induces a symmetric bilinear form $ (-,-) $ on  $ \frak{h}^* $ with the following expansion:
\begin{center}
$  (\delta,\mathscr{G}):= \lbrace 0 \rbrace, \quad (\dot\epsilon_{i},\dot\epsilon_{j}):= \delta_{i,j}, \quad (\dot\delta_{p},\dot\delta_{q}):=-\delta_{p,q}, \quad (\dot\epsilon_{i},\dot\delta_{p}):= 0 .$
\end{center}
Table \ref{Table 2} demonstrates all kinds of roots of $ \mathscr{G} $.

\begin{table}[h!]
 \caption{Kinds of roots for type $X\neq A(n,n)^{(1)}$}
\centering
 \begin{tabular}{||c| c| c| c||} 
 \hline
 $X^{(1)}$ & $R_{im}$ & $R_{re}$ & $R_{ns}$ \\ [0.5ex] 
 \hline\hline
\makecell{$A(m-1,n-1)^{(1)}$,\\
   ($m\neq n$)} &$\Bbb Z \d$ & \makecell{$\{\dot\e_i-\dot\e_r,\dot\d_j-\dot\d_s \mid 1\leq i\neq r\leq 
   m,$\\ $1\leq j\neq s\leq n \}+\Bbb Z \d$}& \makecell{$\pm\{\dot\e_i-\dot\d_j \mid 1\leq i\leq m,~1\leq j\leq n\}$\\$+\Bbb Z \d $}\\
 \hline
$B(m,n)^{(1)}$ & $\Bbb Z \d$ & \makecell{$\pm \{\dot\epsilon_{i}, \dot\delta_{j}, 
\dot\epsilon_{i} \pm \dot\epsilon_{r}, \dot\delta_{j} \pm \dot\delta_{s},2\dot\d_j \mid 1 \leq i\neq r \leq 
m,$\\ $~ 1 \leq j\neq s \leq n\}+\Bbb Z\d$}&  \makecell{$\pm \{  \dot\epsilon_{i} \pm\dot\delta_{j} \mid 1 \leq i \leq m, ~ 1 \leq j \leq n\}$\\
$+\Bbb Z\d$}\\
 \hline
 \makecell{$C(n)^{(1)}=$\\
 $D(1,n-1)^{(1)}$} & $\Bbb Z \d$ & \makecell{$\pm \{ 2\dot\epsilon_1 ,
 \dot\delta_{j} \pm \dot\delta_{s},2\dot\d_j\mid  1 \leq j\neq s \leq n-1\}$\\$+\Bbb Z\d$} & \makecell{$\pm \{ \dot\epsilon_1 \pm \dot\delta_{j}\mid  1 \leq j \leq n\}$\\$+\Bbb Z\d$} \\
 \hline
 \makecell{$D(m,n)^{(1)}$\\
 $(m\neq 1)$} & $\Bbb Z \d$ & \makecell{$\pm \{ \dot\epsilon_{i} \pm \dot\epsilon_{r},
 \dot\delta_{j} \pm \dot\delta_{s},2\dot\d_j\mid 1 \leq i\neq r \leq m,$\\$ 1 \leq j\neq s \leq n
 \}+\Bbb Z\d$} & \makecell{$\pm \{ \dot\epsilon_{i} \pm \dot\delta_{j}\mid 1 \leq i \leq 
 m, 1 \leq j \leq n\}$\\$+\Bbb Z\d$} \\
 \hline
 $F(4)^{(1)}$ & $\Bbb Z \d$ & \makecell{$\pm \{\dot\epsilon, \dot\delta_{i} \pm 
 \dot\delta_{j}, \dot\delta_{i}\mid ~ 1 \leq i \neq j \leq 3  \}$\\$+\Bbb Z\d$} & 
 \makecell{$\pm \{\frac{1}{2} (\dot\epsilon \pm \dot\delta_{1} \pm \dot\delta_{2}\pm 
 \dot\delta_{3} )\}+\Bbb Z\d$} \\
 \hline
 $G(3)^{(1)}$ & $\Bbb Z \d$ & \makecell{$\pm \{\dot\nu , 2 \dot\nu , \dot\epsilon_{i} - \dot\epsilon_{j}, \dot\nu \pm (\dot\epsilon_{i} - \dot\epsilon_{j}) \mid i\neq j$\\$\in \{ 1,2,3 \} \}+\Bbb Z\d$} & \makecell{$\pm \{ 2 \dot\epsilon_{i} - \dot\epsilon_{j} - \dot\epsilon_{t}\mid \{ i,j,t \} = \{ 1,2,3 \} \}$\\$+\Bbb Z\d$} \\
 \hline
 $D(2,1;\lam)^{(1)}$ & $\Bbb Z \d$ & $\{\pm 2\dot\gamma_i \mid i\in \{ 1,2,3 \} 
 \}+\Bbb Z\d $& $\{\pm\dot\gamma_1\pm\dot\gamma_2\pm\dot\gamma_3\} +\Bbb Z\d$\\
 \hline
 \end{tabular}
 \label{Table 2}
\end{table}

\subsection*{Type $X= A(n,n)^{(1)}$} The story of the root system for type $X= A(n,n)^{(1)}$ needs more details. Let $E_{i,j}\in \frak g=A(n,n)$
denote the matrix where the entry in the i-th row and the j-th column is 1. Set
 $$H_i=E_{i,i}-E_{i+1,i+1}, ~~L_i=E_{i+n+1,i+n+1}-E_{i+l+2,i+n+2} \quad (1\leq i\leq n)$$
 and 
 $$\frak h_0:={\rm span} _{\Bbb C}\{H_i,L_i~|~1\leq i\leq n\},$$
$\frak h_0$ is the standard Cartan subalgebra of ${\frak g}_0$. Define
$$
\begin{cases}
\dot\epsilon_i: {\frak h_0}\longrightarrow \Bbb C\\
\indent H_j\mapsto \delta_{i,j}-\delta_{i,j+1}\\
\indent L_j\mapsto 0
\end{cases}
\quad\quad \begin{cases}
\dot\delta_i: {\frak h_0}\longrightarrow \Bbb C\\
\indent L_j\mapsto \delta_{i,j}-\delta_{i,j+1}\\
\indent H_j\mapsto 0
\end{cases}
$$
where  $1\leq i\leq n+1$ and
$1\leq j\leq n$. 
The root system of $\frak g$ is
$$
\dot{R}=\{\dot{\epsilon}_i-\dot{\epsilon}_j,\dot{\delta}_i-\dot{\delta}_j,\dot{\epsilon}_i-\dot{\delta}_j,\dot{\delta}_j-\dot{\epsilon}_i~|~ 1\leq i,j\leq n+1\}.$$
 Set
$${\frak h}:={\frak h_0}\otimes \Bbb C \oplus \Bbb C c_1\oplus \Bbb C c_2\oplus \Bbb Cd_1\oplus \Bbb Cd_2.$$ 
 Now, suppose that
$\dot{\alpha} \in \dot{R}$. Then, define
$$\dot{\alpha}(c_1)=\dot{\alpha}(c_2)=\dot{\alpha}(d_1)=\dot{\alpha}(d_2)=0.$$
Hence, we can consider $\dot{\alpha}$ as a functional on 
$\frak h$. Next, define
$$
\begin{cases}
\sigma: {\frak h}\longrightarrow \Bbb C\\
\indent h,c_1,c_2,d_2\mapsto 0\\
\indent d_1\mapsto 2
\end{cases}
\quad\quad \begin{cases}
\delta: {\frak h}\longrightarrow \Bbb C\\
\indent h,c_1,c_2,d_1\mapsto 0\\
\indent d_2\mapsto 1
\end{cases}
$$
where
 $h\in {\frak h}$. The root system of $A(n,n)^{(1)}$ with respect to $\frak h$ is
$$R=\{\dot{\epsilon}_i-\dot{\epsilon}_j,\dot{\delta}_i-\dot{\delta}_j,\dot{\epsilon}_i-\dot{\delta}_j+\sigma,\dot{\delta}_j-\dot{\epsilon}_i-\sigma~|~ 1\leq i, j\leq n+1\}+\Bbb Z \delta .$$
We can extend the form $(-,-)$ on $\frak g=A(n,n)$ (see (\ref{norm})) to a form  on 
$\frak L$, which is denoted again by $(-,-)$, by the following rules
\begin{align}\label{form Ann}
&(c_i,d_j):=\delta_{i,j}\nonumber\\
&(c_i,c_j)=(d_i,d_j)=(c_i, {\frak g}\otimes \Bbb Ct^k)=(d_i,{\frak g}\otimes \Bbb Ct^k)=\{0\}\\
&(x\otimes t^k, y\otimes t^r):=\delta_{k+r,0}(x,y)\nonumber
\end{align}
where $1\leq i,j\leq 2$, $k,r\in \Bbb Z$ and $x,y\in \frak g$.

Table \ref{Table 3} demonstrates all kinds of roots of $ \frak L$.
\begin{table}[h!]
 \caption{Kinds of roots for type  $X= A(n,n)^{(1)}$}
\centering
 \begin{tabular}{||c| c| c| c||} 
 \hline
 $X^{(1)}$ & $R_{im}$ & $R_{re}$ & $R_{ns}$ \\ [0.5ex] 
 \hline\hline
$A(n-1,n-1)^{(1)}$ & $\Bbb Z \d$ & \makecell{$\{\dot{\epsilon}_i-\dot{\epsilon}_j,\dot{\delta}_i-\dot{\delta}_j\mid 1\leq i\neq j\leq n\}$\\$+\Bbb Z \delta$}& \makecell{$\{\pm (\dot\e_i-\dot\d_j+\sigma)\mid 1\leq i, j\leq n\}$\\$+\Bbb Z \d $}\\
 \hline
 \end{tabular}
 \label{Table 3}
\end{table}

According to Table \ref{Table 2} and Table \ref{Table 3}, if 
$S\subseteq R_{re}$ (or $\subseteq R_{ns}$), then 
$S^\times=S$, because
 $R_{im}=\Bbb Z\d$ for untwisted affine Lie superalgebras. Especially, $R_{re}^\times =R_{re}$ and 
 $R_{ns}^\times=R_{ns}$.

In the end, Table \ref{Table 4} provides the even and odd parts of the root systems of all untwisted affine Lie superalgebras.

\begin{table}[h!]
 \caption{The even and odd part of root systems for all types}
\centering
 \begin{tabular}{||c| c| c||} 
 \hline
 $X^{(1)}$ & $R_0$ & $R_{1}$   \\ [0.5ex] 
 \hline\hline
\makecell{$A(m-1,n-1)^{(1)}$,\\
   ($m\neq n$)} & \makecell{$\{\dot\e_i-\dot\e_r,\dot\d_j-\dot\d_s \mid 1\leq i,r\leq 
   m,$\\ $1\leq j,s\leq n \}+\Bbb Z \d$}& \makecell{$\pm\{\dot\e_i-\dot\d_j \mid 1\leq i\leq m,~1\leq j\leq n\}$\\$+\Bbb Z \d $}\\
 \hline
$A(n,n)^{(1)}$ &\makecell{$\{\dot{\epsilon}_i-\dot{\epsilon}_j,\dot{\delta}_i-\dot{\delta}_j\mid 1\leq i, j\leq n\}$\\$+\Bbb Z \delta$}& \makecell{$\{\pm (\dot\e_i-\dot\d_j+\sigma)\mid 1\leq i, j\leq n\}$\\$+\Bbb Z \d $}\\ 
 \hline
$B(m,n)^{(1)}$ & \makecell{$\pm \{\dot\epsilon_{i},  
\dot\epsilon_{i} \pm \dot\epsilon_{r}, \dot\delta_{j} \pm \dot\delta_{s} \mid 1 \leq i,r \leq 
m,$\\ $~ 1 \leq j,s \leq n, i \neq r\}+\Bbb Z\d$}&  \makecell{$\pm \{ \dot\delta_{j}, \dot\epsilon_{i} \pm\dot\delta_{j} \mid 1 \leq i \leq m, ~ 1 \leq j \leq n\}$\\
$+\Bbb Z\d$}\\
 \hline
 \makecell{$C(n)^{(1)}=$\\
 $D(1,n-1)^{(1)}$} &  \makecell{$\pm \{ 2\dot\epsilon_1 ,
 \dot\delta_{j} \pm \dot\delta_{s}\mid  1 \leq j,s \leq n-1\}$\\$+\Bbb Z\d$} & \makecell{$\pm \{ \dot\epsilon_1 \pm \dot\delta_{j}\mid  1 \leq j \leq n\}$\\$+\Bbb Z\d$} \\
 \hline
 \makecell{$D(m,n)^{(1)}$\\
 $(m\neq 1)$} &  \makecell{$\pm \{ \dot\epsilon_{i} \pm \dot\epsilon_{r},
 \dot\delta_{j} \pm \dot\delta_{s}\mid 1 \leq i,r \leq m,$\\$ 1 \leq j,s \leq n,  i \neq r
 \}+\Bbb Z\d$} & \makecell{$\pm \{ \dot\epsilon_{i} \pm \dot\delta_{j}\mid 1 \leq i \leq 
 m, 1 \leq j \leq n\}$\\$+\Bbb Z\d$} \\
 \hline
 $F(4)^{(1)}$ &  \makecell{$\pm \{ 0, \dot\epsilon, \dot\delta_{i} \pm 
 \dot\delta_{j}, \dot\delta_{i}\mid ~ 1 \leq i \neq j \leq 3  \}$\\$+\Bbb Z\d$} & 
 \makecell{$\pm \{\frac{1}{2} (\dot\epsilon \pm \dot\delta_{1} \pm \dot\delta_{2}\pm 
 \dot\delta_{3} )\}+\Bbb Z\d$} \\
 \hline
 $G(3)^{(1)}$ & \makecell{$\pm \{ 0,  2 \dot\nu , \dot\epsilon_{i} - \dot\epsilon_{j}, \dot\nu \pm (\dot\epsilon_{i} - \dot\epsilon_{j}) \mid \{i,$\\$j,t \} = \{ 1,2,3 \} \}+\Bbb Z\d$} & \makecell{$\pm \{ \dot\nu , 2 \dot\epsilon_{i} - \dot\epsilon_{j} - \dot\epsilon_{t}\mid \{ i,j,t \} = \{ 1,2,3 \} \}$\\$+\Bbb Z\d$} \\
 \hline
 $D(2,1;\lam)^{(1)}$ &  $\{ 0, \pm 2\dot\gamma_i \mid i\in \{ 1,2,3 \} 
 \}+\Bbb Z\d $& $\{\pm\dot\gamma_1\pm\dot\gamma_2\pm\dot\gamma_3\} +\Bbb Z\d$\\
 \hline
 \end{tabular}
 \label{Table 4}
\end{table}


\section{\bf Results }

Throughout this section, assume that $\frak L=\frak L_0\oplus \frak L_1$ represents an untwisted affine Lie superalgebra with the root system $R=R_0\cup R_1$ with respect to the standard Cartan subalgebra $\frak H$ of $\frak L_0$.

A vector (super)space $M$ together with a bilinear map 
$\cdot:\mathfrak {L}\times M\to M$ is called an ${\mathfrak {L}}$-{\it module} if
$$[x,y]\cdot u=x\cdot (y\cdot u)- (-1)^{\bar x\bar y}y\cdot (x\cdot u)$$
for all $x,y$ in 
$\mathfrak {g}$ and $u$ in $M$. 
An $\frak L$-module $M$ have a {\it weight space decomposition} with respect to $\frak H$ (or simply, a weight module)
if $M=\bigoplus_{\lambda \in \frak H ^*}M^\lambda$ where 
$\frak H^*$ is the dual space of $\frak H$, and 
$$M^\lambda :=\{x\in M~|~ hx=\lambda (h)x~ (\forall h\in \frak H)\}~~(\lambda \in \frak H^*).$$
The module $M$ is called a {\it finite weight module} if 
${\rm dim} M^\lambda <\infty$. The set 
$${\rm supp}(M):=\{\lambda \in {\frak H}^* ~|~ M^\lambda \neq {0}\}$$
is called {\it support} of module $M$. 
A subspace $N$ of $\frak L$-module $M$ is called {\it submodule} if 
$\frak L \cdot N\subseteq N$. A nonzero module $M$ is said to be {\it simple} if the only submodules are 
$\{0\}$ and $M$.

Suppose that $M$ is a simple $\frak L$-module and 
$\alpha \in R_{re}$. Then, consider the nonzero root vector corresponding to $\alpha$, say $x_\alpha$, and set
$$N:=\{v\in M~|~x_\alpha ^mv=0~ {\rm for~some~m\in \Bbb N}\}.$$ 
One can check that $N$ is
a submodule of $M$. Since $M$ is simple, either $N=0$ or $N=M$.
If $N=0$, then $x_\alpha$ acts injectively, otherwise $x_\alpha$ acts
locally nilpotently. This leads us to set some notations.
The set of all elements of $R_{re}$ whose corresponding nonzero root vectors act injectively (resp. locally nilpotently) on a weight $\frak L$-module $M$  denotes by $R^{in}_M$ (resp. $R^{ln}_M$).  If there is no ambiguity, we shall use the notation $R^{in}$ (resp. $R^{ln}$) instead of $R^{in}_M$ (resp. $R^{ln}_M$).

Based on Table \ref{Table 2} and  Table \ref{Table 4}, we have $R_0\cap R_{re}\neq \emptyset$ and some roots are located in both $R_{re}$ and $R_1$ (for some types of untwisted affine Lie superalgebra). Hence, $R_{re}\not \subseteq R_0$.
For $\a \in R_{re}$, by \cite[Example 3.4]{Yousofzadeh:Extended affine 2016} and \cite[Lemma 3.6]{Yousofzadeh:Extended affine 2016}, there are
$x\in \frak L^\alpha$, $y\in \frak L^{-\alpha}$ such that triple $(x,y,h:=[x,y])$ satisfies
$$[h,x]=2x~~{\rm and}~~[h,y]=-2y.$$ 
This triple is called an $\frak {sl}_2$-{\it super triple}.
The subsuperalgebra $\frak P$ generated by $\{x,y,h\}$ is isomorphic to $\frak{sl}(1,2)$ provided that 
$\a\in R_{re}\cap R_0$, and $\frak P$ is isomorphic to $\frak{osp}(1, 2)$ if $\a \in R_{re}\cap R_1$, see \cite[Section 2 and Section 3]{Yousofzadeh:Extended affine 2016}.

For real root $\alpha$ (and $h\in \frak H$), we set 
$$r_\alpha :{\frak H}^*\longrightarrow {\frak H}^*,~~\lambda \mapsto 
\lambda-\frac{2(\lambda , \alpha)}{(\alpha , \alpha)}\alpha~(=\lambda -\lambda(h)\alpha).$$

\begin{Lem}\label{3.1}
	Suppose that $M$ is a simple $\frak L$-module having a finite weight space decomposition with respect to $\frak H$ and corresponding representation $\pi$. Assume that $\alpha \in R_{re}\cap R_0$ and choose $x\in \frak L^\alpha$, $y\in \frak L^{-\alpha}$ such that $(x, y, h:=[x,y])$ is an $\frak {sl}_2$-triple. Assume that $x$ and $y$ 	
	act locally nilpotent on $M$. Then, for $\theta_\alpha:=e^{x}e^{-y}e^{x}$, we have
	$$\theta_\alpha(M^\lambda)=M^{r_\alpha (\lambda)}\quad \lambda \in {supp}(M),$$ 
	in particular, $\lambda \in {\rm supp}(M)$if and only if 
	$r_\alpha (\lambda)\in {\rm supp}(M)$.
\end{Lem}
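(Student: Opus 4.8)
The plan is to realize $\theta_\alpha$ as an invertible operator on $M$ implementing the $\mathfrak{sl}_2$-reflection, and to track its effect on $\mathfrak{H}$-weights through a single conjugation identity.

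First I would record that $\theta_\alpha$ is a well-defined bijection of $M$. Since $x$ and $y$ act locally nilpotently, for each $v\in M$ the expressions $e^{\pi(x)}v$ and $e^{\pi(-y)}v$ are finite sums, so $e^{\pi(x)}$ and $e^{\pi(-y)}$ are genuine linear maps with inverses $e^{-\pi(x)}$ and $e^{\pi(y)}$. Hence $\theta_\alpha=e^{\pi(x)}e^{\pi(-y)}e^{\pi(x)}$ is a bijection of $M$ with $\theta_\alpha^{-1}=e^{-\pi(x)}e^{\pi(y)}e^{-\pi(x)}$.

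The crux is the conjugation identity $\theta_\alpha\,\pi(h')\,\theta_\alpha^{-1}=\pi\big(h'-\alpha(h')h\big)$ for $h'\in\mathfrak{H}$. To prove it I would invoke the exponential--adjoint formula $e^{\pi(x)}\pi(w)e^{-\pi(x)}=\pi\big(e^{\operatorname{ad}x}(w)\big)$, which is valid whenever $e^{\operatorname{ad}x}(w)$ is a finite sum; here $x$ is even (as $\alpha\in R_{re}\cap R_0$), so $[\pi(x),\pi(w)]=\pi([x,w])$ and no signs intervene. The finite-dimensional space $V:=\mathfrak{H}+\mathbb{C}x+\mathbb{C}y$ is stable under $\operatorname{ad}x$ and $\operatorname{ad}y$, both of which act nilpotently on it (indeed $(\operatorname{ad}x)^3V=0$), so all three exponentials act as honest polynomial operators on $V$ and the conjugation formula applies termwise. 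The composite $\Theta_\alpha:=e^{\operatorname{ad}x}e^{\operatorname{ad}(-y)}e^{\operatorname{ad}x}$ is then the classical $\mathfrak{sl}_2$ Weyl-group element: it fixes $\ker\alpha\cap\mathfrak{H}$ pointwise and sends $h\mapsto -h$, so on $\mathfrak{H}$ it is the involution $h'\mapsto h'-\alpha(h')h$ (which in particular preserves $\mathfrak{H}$). I expect this reduction---local nilpotency on $V$, the termwise conjugation, and the explicit determination of $\Theta_\alpha|_{\mathfrak{H}}$---to be the main, though routine, obstacle.

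Finally I would apply the identity to $u\in M^\lambda$. Using that $\Theta_\alpha|_{\mathfrak{H}}$ is an involution, so $\Theta_\alpha^{-1}(h')=h'-\alpha(h')h$, we get for every $h'\in\mathfrak{H}$
\begin{align*}
\pi(h')\,\theta_\alpha(u) &= \theta_\alpha\,\pi\big(\Theta_\alpha^{-1}(h')\big)(u)
= \theta_\alpha\big(\lambda(h'-\alpha(h')h)\,u\big)\\
&= \big(\lambda(h')-\alpha(h')\lambda(h)\big)\,\theta_\alpha(u).
\end{align*}
Since $\lambda(h')-\alpha(h')\lambda(h)=(\lambda-\lambda(h)\alpha)(h')=r_\alpha(\lambda)(h')$, this shows $\theta_\alpha(M^\lambda)\subseteq M^{r_\alpha(\lambda)}$. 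As $r_\alpha$ is an involution, the same computation applied to $\theta_\alpha^{-1}$ gives $\theta_\alpha^{-1}\big(M^{r_\alpha(\lambda)}\big)\subseteq M^{\lambda}$, and combining the two inclusions yields $\theta_\alpha(M^\lambda)=M^{r_\alpha(\lambda)}$. Because $\theta_\alpha$ is bijective, $M^\lambda\neq 0$ if and only if $M^{r_\alpha(\lambda)}\neq 0$, which is precisely the assertion that $\lambda\in\operatorname{supp}(M)$ if and only if $r_\alpha(\lambda)\in\operatorname{supp}(M)$.
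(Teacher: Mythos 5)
Your proposal is correct and follows essentially the same route as the paper: the paper's proof of Lemma \ref{3.1} simply says to repeat the proof of \cite[Lemma 3.4]{Yousofzadeh:Finite weight 2020}, which is exactly this classical argument --- exponentiate the locally nilpotent actions of $x$ and $y$, conjugate $\pi(\mathfrak{H})$ by $\theta_\alpha$ via the exponential--adjoint formula, identify the resulting automorphism of $\mathfrak{H}$ with $h'\mapsto h'-\alpha(h')h$, and read off the reflection $r_\alpha$ on weights. Your handling of the details (evenness of $x$, nilpotency of $\operatorname{ad}x$ and $\operatorname{ad}y$ on $\mathfrak{H}+\mathbb{C}x+\mathbb{C}y$, and the two inclusions giving equality of weight spaces) is sound.
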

\begin{proof}
	Repeat the proof of \cite[Lemma 3.4]{Yousofzadeh:Finite weight 2020}.  
\end{proof}

Let $M$ be an $\frak L$-module with a weight space decomposition with respect to $\frak H$. We define 
\begin{align*}
	{\frak B}_M&:=\left\{\alpha \in {\rm span}_{\Bbb Z}{R}~|~\{k\in \Bbb Z^{>0}~|~\lambda +k\alpha \in {\rm supp}(M)\}~{\rm is~finite}~\forall \lambda \in {\rm supp}(M)\right\},\\
	{\frak C}_M&:=\{\alpha \in {\rm span}_{\Bbb Z}{R}~|~
	\alpha +{\rm supp}(M)\subseteq {\rm supp}(M)\},\\
	\overline{{\frak B}}_M&:=\{\alpha \in {\rm span}_{\Bbb Z}{R}~|~t\alpha \in {\frak B}_M~ {\rm for ~some}~t\in \Bbb Z^{>0}\},\\
	\overline{{\frak C}}_M&:=\{\alpha \in {\rm span}_{\Bbb Z}{R}~|~t\alpha \in {\frak C}_M~ {\rm for ~some}~t\in \Bbb Z^{>0}\}.
\end{align*}

\begin{Lem}\label{3.2}
	The following statements hold.
	\begin{itemize}
		\item [(i)] $\alpha \in  {\frak B}_M$ if and only if for all positive integers $t$, $t\alpha \in {\frak B}_M$ if and only if there is a positive integers $t$ such that $t\alpha \in {\frak B}_M$. In particular, ${\frak B}_M=\overline{{\frak B}}_M$.
		\item [(ii)] If $\alpha_1 ,  \cdots, \alpha_n \in 
		{\frak C}_M$ (resp. $\overline{{\frak C}}_M$), then $\alpha_1 +  \cdots + \alpha_n \in 
		{\frak C}_M$ (resp. $\overline{{\frak C}}_M$).	
	\end{itemize}
\end{Lem}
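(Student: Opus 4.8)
The plan is to handle the two parts separately; both reduce to elementary set-manipulations of ${\rm supp}(M)$, together with one pigeonhole argument that is the only nonformal ingredient. For part (i), I would prove the cycle of implications among the three conditions $\alpha\in{\frak B}_M$ (call it (a)), ``$t\alpha\in{\frak B}_M$ for all $t\in\Bbb Z^{>0}$'' (call it (b)), and ``$t\alpha\in{\frak B}_M$ for some $t\in\Bbb Z^{>0}$'' (call it (c)). The implications (b)$\Rightarrow$(a) and (a)$\Rightarrow$(c) are immediate by specializing $t=1$, and (b)$\Rightarrow$(c) is the trivial passage from ``for all'' to ``for some''; so the substance is (a)$\Rightarrow$(b) and (c)$\Rightarrow$(a), which together with (b)$\Rightarrow$(c) close the cycle.

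For (a)$\Rightarrow$(b): fix $t$ and $\lambda\in{\rm supp}(M)$. Since $\lambda+k(t\alpha)=\lambda+(kt)\alpha$, the assignment $k\mapsto kt$ injects $\{k\in\Bbb Z^{>0}\mid \lambda+k(t\alpha)\in{\rm supp}(M)\}$ into the finite set $\{j\in\Bbb Z^{>0}\mid \lambda+j\alpha\in{\rm supp}(M)\}$, so the former is finite. For the crucial converse (c)$\Rightarrow$(a): assume $t\alpha\in{\frak B}_M$ and suppose, for contradiction, that $\{k\in\Bbb Z^{>0}\mid \lambda+k\alpha\in{\rm supp}(M)\}$ is infinite for some $\lambda\in{\rm supp}(M)$. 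By the pigeonhole principle some residue $r$ modulo $t$ is represented by infinitely many such $k$; letting $k_0$ be the least of these and setting $\mu:=\lambda+k_0\alpha\in{\rm supp}(M)$, every larger $k$ in this class has the form $k_0+tj$ with $j\in\Bbb Z^{>0}$, whence $\mu+j(t\alpha)\in{\rm supp}(M)$ for infinitely many $j$, contradicting $t\alpha\in{\frak B}_M$. The equality ${\frak B}_M=\overline{{\frak B}}_M$ is then merely the equivalence (a)$\Leftrightarrow$(c) read through the definition of $\overline{{\frak B}}_M$.

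For part (ii) an induction on $n$ reduces both claims to the case $n=2$. In the ${\frak C}_M$ case, if $\alpha,\beta\in{\frak C}_M$ then $(\alpha+\beta)+{\rm supp}(M)=\alpha+(\beta+{\rm supp}(M))\subseteq\alpha+{\rm supp}(M)\subseteq{\rm supp}(M)$, so $\alpha+\beta\in{\frak C}_M$; taking repeated sums of a single element shows in particular that ${\frak C}_M$ is closed under multiplication by positive integers. In the $\overline{{\frak C}}_M$ case, pick $s,r\in\Bbb Z^{>0}$ with $s\alpha,r\beta\in{\frak C}_M$; then $rs(\alpha+\beta)=r(s\alpha)+s(r\beta)$, each summand lies in ${\frak C}_M$ by the scaling closure just noted, and hence so does the sum by the first case, exhibiting $t=rs$ as a witness for $\alpha+\beta\in\overline{{\frak C}}_M$.

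The single nonformal step is the pigeonhole argument in the direction (c)$\Rightarrow$(a) of part (i); the rest is a direct bookkeeping of how $\alpha$ interacts with ${\rm supp}(M)$, and I anticipate no further obstacle.
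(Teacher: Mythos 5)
Your proof is correct. Note that the paper itself gives no argument for this lemma --- its ``proof'' is a one-line instruction to repeat the proof of Lemma 3.5 of the cited twisted-case paper --- so your self-contained write-up is exactly what is being delegated. The structure you chose is the natural one: the cycle (a)$\Rightarrow$(b)$\Rightarrow$(c)$\Rightarrow$(a) in part (i), with the residue-class pigeonhole argument carrying the only real content in (c)$\Rightarrow$(a) (and your key observation there, that $\mu:=\lambda+k_0\alpha$ again lies in ${\rm supp}(M)$ so that the definition of ${\frak B}_M$ can be applied at $\mu$, is precisely the point that makes the contradiction legitimate); and in part (ii) the translation identity $(\alpha+\beta)+{\rm supp}(M)=\alpha+(\beta+{\rm supp}(M))$ plus the witness $t=rs$ via $rs(\alpha+\beta)=r(s\alpha)+s(r\beta)$ for the $\overline{{\frak C}}_M$ case. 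I see no gap; this is a complete replacement for the outsourced proof.
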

\begin{proof}
	Just repeat the proof of \cite[Lemma  3.5]{Yousofzadeh:Finite weight 2020}.	
\end{proof}

\begin{Pro}\label{3.3}
	Suppose that $M$ is an $\frak L$-module having weight space decomposition with respect to $\frak H$ and suppose that $T$ satisfies one of the following:
 \begin{enumerate}
     \item[(i)] $T= R$;
     \item [(ii)] $T$ is a symmetric closed\footnote{That is $T=-T$ and $(T+T)\cap R\subseteq T$.}  subset of ${R}\setminus {R}_{ns}$ with $\Bbb Z\d\subseteq T$.
 \end{enumerate}
 Set $\mathcal{K}:= \bigoplus_{\a\in T} \frak L^{\a}$ (note that $\frak{H}=\frak{L}^0\subseteq \mathcal{K}$) and assume $W$ is a $\mathcal{K}$-submodule of $M$. Let $\emptyset \neq {\mathcal S} \subseteq {T}$ such that $\mathcal S$ does not contain imaginary roots, ${\mathcal S}\subseteq {\frak B}_W$ and 
	$-{\mathcal S}\subseteq {\frak C}_W$. Then the following hold.
	\begin{itemize}
		\item [(i)] If $\mathcal A$ is a nonempty set of ${\rm supp}(W)$ with 
		$$({\mathcal A}+{\mathcal S}) \cap {\rm supp}(M) \subseteq 
		{\mathcal A},$$
		then for each $\beta \in \mathcal S$, 
		$${\mathcal A}_\beta:=\{\lambda \in {\rm supp}(W)~|~ 
		\lambda +\beta \not \in {\rm supp}(M)\}$$
		is also nonempty with $({\mathcal A}_{\beta}+{\mathcal S}) \cap {\rm supp}(W) \subseteq 
		{\mathcal A}_\beta$.
		\item [(ii)] If $\mathcal S$ is finite and $\mathcal A$ is as in part (i), then there is $\lambda \in \mathcal A$ such that 
		$$(\lambda +{\rm span}_{\Bbb Z^{\geq 0}} {\mathcal S})\cap {\rm supp}(W)=\{\lambda\}.$$
	\end{itemize}
\end{Pro}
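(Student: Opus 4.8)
The plan is to treat the two parts separately, mirroring the corresponding statement of \cite{Yousofzadeh:Finite weight 2020}, and to reduce everything to the two closure properties $\mathcal S\subseteq{\frak B}_W$, $-\mathcal S\subseteq{\frak C}_W$ together with the $\mathcal K$-module structure of $W$, which guarantees $\frak L^{\pm\gamma}\cdot W\subseteq W$ for every $\gamma\in\mathcal S$ (since $\pm\mathcal S\subseteq T$).

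For the nonemptiness in part (i), I would fix some $\mu\in\mathcal A$ and climb the $\beta$-string. Since $\beta\in\mathcal S\subseteq{\frak B}_W$, the set $\{k\in\Bbb Z^{>0}\mid \mu+k\beta\in{\rm supp}(W)\}$ is finite, so there is a largest $k_0\geq 0$ with $\mu+k_0\beta\in{\rm supp}(W)$. An induction on $j$ ($0\leq j\leq k_0$), using $(\mathcal A+\mathcal S)\cap{\rm supp}(M)\subseteq\mathcal A$ and the fact that all intermediate weights lie in ${\rm supp}(W)\subseteq{\rm supp}(M)$, gives $\mu+k_0\beta\in\mathcal A$. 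Now $\mu+(k_0+1)\beta\notin{\rm supp}(W)$ by maximality of $k_0$; and if it were in ${\rm supp}(M)$ it would lie in $(\mathcal A+\mathcal S)\cap{\rm supp}(M)\subseteq\mathcal A\subseteq{\rm supp}(W)$, a contradiction. Hence $\lambda:=\mu+k_0\beta$ lies in ${\rm supp}(W)$ with $\lambda+\beta\notin{\rm supp}(M)$, i.e. $\lambda\in\mathcal A_\beta$, so $\mathcal A_\beta\neq\emptyset$.

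The invariance $(\mathcal A_\beta+\mathcal S)\cap{\rm supp}(W)\subseteq\mathcal A_\beta$ is the heart of the matter. Given $\lambda\in\mathcal A_\beta$ and $\gamma\in\mathcal S$ with $\lambda+\gamma\in{\rm supp}(W)$, I must show $\lambda+\gamma+\beta\notin{\rm supp}(M)$, knowing only $\lambda+\beta\notin{\rm supp}(M)$. The driving observation is that a root vector $x_{-\gamma}\in\frak L^{-\gamma}\subseteq\mathcal K$ maps $M^{\lambda+\gamma+\beta}$ into $M^{\lambda+\beta}=\{0\}$, so $\frak L^{-\gamma}\cdot M^{\lambda+\gamma+\beta}=\{0\}$; it therefore suffices to know that $x_{-\gamma}$ acts injectively on $M^{\lambda+\gamma+\beta}$, equivalently that there is no $\gamma$-lowest weight vector of that weight. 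This injectivity is where the representation theory enters: for $\gamma\in R_{re}$ one invokes the $\frak{sl}_2$-super triple (isomorphic to $\frak{sl}(1,2)$ or to $\frak{osp}(1,2)$) attached to $\gamma$ and the resulting $\gamma$-string structure on $M$, using the locally nilpotent/injective dichotomy encoded by $\gamma\in{\frak B}_W$ and $-\gamma\in{\frak C}_W$; for a nonsingular $\gamma$ (which can occur only when $T=R$) one argues instead through the nonsingular-string axiom $\{\gamma'-\gamma,\gamma'+\gamma\}\cap R\neq\emptyset$. I expect this injectivity step to be the main obstacle, precisely because $M$ is only assumed to be a weight module while the closure data are attached to $W$; transporting ${\rm supp}(W)$-information to the vanishing of a weight space of $M$ is the delicate point, and I would import the argument of \cite{Yousofzadeh:Finite weight 2020} essentially verbatim.

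For part (ii) I would avoid this module-theoretic input entirely and argue combinatorially with the ${\rm supp}(W)$-versions only. Write $\mathcal S=\{\beta_1,\dots,\beta_n\}$ and set $\mathcal A_0:=\mathcal A$, which satisfies $(\mathcal A_0+\mathcal S)\cap{\rm supp}(W)\subseteq\mathcal A_0$ because ${\rm supp}(W)\subseteq{\rm supp}(M)$. Inductively define $\mathcal A_i:=\{\lambda\in\mathcal A_{i-1}\mid \lambda+\beta_i\notin{\rm supp}(W)\}$. The same climb as above (now using $\beta_i\in{\frak B}_W$) shows $\mathcal A_i\neq\emptyset$, while its stability $(\mathcal A_i+\mathcal S)\cap{\rm supp}(W)\subseteq\mathcal A_i$ is clean: if $\lambda\in\mathcal A_i$, $\delta\in\mathcal S$ and $\lambda+\delta\in{\rm supp}(W)$, then $\lambda+\delta\in\mathcal A_{i-1}$ by the inductive stability, whereas $\lambda+\delta+\beta_i\in{\rm supp}(W)$ would yield, via $-\delta\in{\frak C}_W$, that $\lambda+\beta_i=(-\delta)+(\lambda+\delta+\beta_i)\in{\rm supp}(W)$, contradicting $\lambda\in\mathcal A_i$. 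Any $\lambda\in\mathcal A_n\subseteq\mathcal A$ then satisfies $\lambda+\beta_i\notin{\rm supp}(W)$ for every $i$, and a final induction on $\sum_i k_i$ upgrades this to $(\lambda+{\rm span}_{\Bbb Z^{\geq 0}}\mathcal S)\cap{\rm supp}(W)=\{\lambda\}$: if $\lambda+\sum_i k_i\beta_i\in{\rm supp}(W)$ for some nonzero tuple $(k_i)$, choose $j$ with $k_j>0$ and apply $-\beta_j\in{\frak C}_W$ to descend to a shorter nonzero combination still lying in ${\rm supp}(W)$, contradicting the inductive hypothesis. This settles (ii) using only ${\frak B}_W$ and ${\frak C}_W$.
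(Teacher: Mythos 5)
Your proof of part (ii) is correct, and so is the climbing argument for the nonemptiness assertion in part (i) (the fact you use there, that the intermediate weights $\mu+j\beta$, $0\le j\le k_0$, lie in ${\rm supp}(W)$, follows by descending from $\mu+k_0\beta$ with $-\beta\in\frak C_W$; this should be said). Your version of (ii) is moreover well adapted to how the proposition is actually invoked later: in Claim 4 of the proof of Proposition \ref{3.4} only the condition $(X+\mathcal S)\cap{\rm supp}(W)\subseteq X$ is verified, and your argument needs nothing more than that.

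The genuine gap is the invariance $(\mathcal A_\beta+\mathcal S)\cap{\rm supp}(W)\subseteq\mathcal A_\beta$ in part (i), which you do not prove: you reduce it to injectivity of some $x_{-\gamma}\in\frak L^{-\gamma}$ on $M^{\lambda+\gamma+\beta}$ and then defer to \cite{Yousofzadeh:Finite weight 2020}. That reduction cannot be completed, because nothing in the hypotheses controls root vectors on weight spaces of $M$ lying outside $W$; for instance $M$ may contain a one-dimensional direct summand concentrated exactly at the weight $\lambda+\gamma+\beta$, on which every root vector acts by zero. In fact, from the data you allow yourself ($\lambda\in{\rm supp}(W)$, $\lambda+\beta\notin{\rm supp}(M)$, $\lambda+\gamma\in{\rm supp}(W)$) the desired conclusion is false: take an even real root $\dot\alpha\in\dot R$, set $\gamma:=\dot\alpha$, $\beta:=\dot\alpha+\delta$, let $\mu'$ be a weight vanishing on $\frak H\cap[\frak L,\frak L]$ (so a one-dimensional module $\Bbb C_{\mu'}$ exists), put $\nu:=\mu'-\beta$, let $W$ be the Verma module of highest weight $\nu$ and $M:=W\oplus\Bbb C_{\mu'}$. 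One checks that $\mathcal S:=\{\gamma,\beta\}\subseteq\frak B_W$, $-\mathcal S\subseteq\frak C_W$, and that the set $\mathcal A$ of weights reachable from $\nu-\delta$ by $\mathcal S$-steps inside ${\rm supp}(M)$ is nonempty, contained in ${\rm supp}(W)$, and satisfies $(\mathcal A+\mathcal S)\cap{\rm supp}(M)\subseteq\mathcal A$; yet $\lambda:=\nu-\dot\alpha$ belongs to $\mathcal A_\beta$ as literally defined, $\lambda+\gamma=\nu\in{\rm supp}(W)$, while $\lambda+\gamma+\beta=\mu'\in{\rm supp}(M)$.

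The missing idea is that the invariance must be routed through $\mathcal A$ and its ${\rm supp}(M)$-stability (this is the only possible role of that hypothesis), with $\mathcal A_\beta$ read as $\{\lambda\in\mathcal A\mid\lambda+\beta\notin{\rm supp}(M)\}$ --- note that your own climbing argument produces precisely an element of this smaller set. With that reading no representation theory enters at all: if $\lambda\in\mathcal A_\beta$, $\gamma\in\mathcal S$ and $\lambda+\gamma\in{\rm supp}(W)\subseteq{\rm supp}(M)$, then $\lambda+\gamma\in(\mathcal A+\mathcal S)\cap{\rm supp}(M)\subseteq\mathcal A$; and if $\lambda+\gamma+\beta$ were in ${\rm supp}(M)$, it would lie in $(\mathcal A+\mathcal S)\cap{\rm supp}(M)\subseteq\mathcal A\subseteq{\rm supp}(W)$, so $-\gamma\in\frak C_W$ would give $\lambda+\beta=(\lambda+\gamma+\beta)-\gamma\in{\rm supp}(W)\subseteq{\rm supp}(M)$, contradicting $\lambda\in\mathcal A_\beta$. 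This also recovers the full ${\rm supp}(M)$-stability of $\mathcal A_\beta$, which is what makes the iteration underlying part (ii) possible; the $\frak{sl}_2$-super triples, $\frak{osp}(1,2)$ and nonsingular-string considerations you invoke are not needed anywhere in this proposition.
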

\begin{proof}
	Just repeat the proof of \cite[Proposition 3.6]{Yousofzadeh:Finite weight 2020}.	
\end{proof}

Recall that if $S$ is a subset of $R$ and  $f : {\rm span}_{\Bbb R} S \to \Bbb R$ is a linear functional, then the decomposition
$$ S = S^+\cup S^\circ\cup  S^-$$
where
$$S^\pm=S^\pm_f:=\{\a\in S\mid \pm f(\a)>0\} \quad {\rm and}\quad S^\circ=S^\circ_f:=\{\a\in S\mid f(\a)=0\}$$
is said to be a {\it triangular decomposition} for $S$.

Now, we have the following result, which has an important role in obtaining our main result, Theorem \ref{3.9}.

\begin{Pro}\label{3.4}
Suppose that $M$ is an $\frak L$-module having weight space decomposition with respect to $\frak H$ and suppose that $T$ satisfies one of the following:
 \begin{enumerate}
     \item[(i)] $T= R$;
     \item [(ii)] $T$ is a symmetric closed subset of ${R}\setminus {R}_{ns}$ with $\Bbb Z\d\subseteq T$.
 \end{enumerate}
 Set $\mathcal{K}:= \bigoplus_{\a\in T} \frak L^{\a}$ (note that $\frak{H}=\frak{L}^0\subseteq \mathcal{K}$) and assume $W$ is a $\mathcal{K}$-submodule of $M$.
	Suppose that 
	$T=T^+\cup T^0\cup T^-$ is a triangular decomposition for $T$ with corresponding linear functional $\zeta :{\rm span}_\Bbb R T\longrightarrow \Bbb R$ where $\zeta (\delta)\neq 0$. Set
	$$T_{re}:=T\cap R_{re},~~T_{re} ^\pm:=T^\pm \cap R_{re},~{\rm and}~\dot{T}=\{\dot \a \in \dot R~|~ (\dot \a +\Bbb Z \d)\cap T\neq \emptyset\}.
 $$
	Assume that  $T^+_{re}\subseteq {\frak B}_W, ~ T^-_{re}\subseteq {\frak C}_W$ and $\dot T_{re}\neq \emptyset$. If $p\in \Bbb Z^{>0}$ and $\lambda \in 
	{\rm supp}(W)$ with $(\lambda +\Bbb Z^{>0}p\delta)\cap 
	{\rm supp}(W)=\emptyset$, then there is $\mu\in {\rm supp}(W)$ such that 
	$$\left(\mu +(T^+_{re}\cup \Bbb Z^{>0}\delta)\right)\cap {\rm supp}(W)=\emptyset.$$
\end{Pro}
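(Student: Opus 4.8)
The plan is to reduce the statement to the extremal principle of Proposition~\ref{3.3} and then to promote ``maximality along finitely many real directions'' to the full conclusion by a $\delta$-column argument. For concreteness I assume $\zeta(\delta)>0$; the opposite sign is analogous, with the given $p\delta$-maximality of $\lambda$ taking over the role played below by the real roots.

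\smallskip
\noindent\emph{Step 1 (the key lemma: $\delta\in{\frak B}_W$).} Since $\dot T_{re}\neq\emptyset$ and $\dot T=-\dot T$ (because $T=-T$), fix $\dot\alpha\in\dot T_{re}$ and choose $\beta=\dot\alpha+a\delta$ and $\gamma=-\dot\alpha+b\delta$ in $T^+_{re}$; this is possible because $\zeta(\delta)>0$ puts $\dot\alpha+m\delta$ (resp. $-\dot\alpha+m\delta$) in $T^+_{re}$ for all large $m$, and $\Bbb Z\delta\subseteq T$ together with closedness of $T$ keeps the whole upward ray in $T$. Then $\beta+\gamma=q\delta$ with $q:=a+b>0$. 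If $\nu+kq\delta\in{\rm supp}(W)$ for some $k\geq1$, then applying $-\gamma\in T^-_{re}\subseteq{\frak C}_W$ exactly $k$ times gives $\nu+kq\delta-k\gamma=\nu+k\beta\in{\rm supp}(W)$; as $\beta\in T^+_{re}\subseteq{\frak B}_W$, only finitely many such $k$ occur. Hence $q\delta\in{\frak B}_W$, and by Lemma~\ref{3.2}(i) we get $\delta\in{\frak B}_W$. In particular every $\nu\in{\rm supp}(W)$ has a finite $\delta$-column, so its top $\nu+j_0(\nu)\delta$ is well defined.

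\smallskip
\noindent\emph{Step 2 (a jointly maximal weight).} For each $\dot\alpha\in\dot T_{re}$ let $\beta_0(\dot\alpha)$ be the element of $(\dot\alpha+\Bbb Z\delta)\cap T^+_{re}$ of least $\delta$-coefficient, and set ${\mathcal S}:=\{\beta_0(\dot\alpha)\mid\dot\alpha\in\dot T_{re}\}$. This is a finite subset of $T^+_{re}$ containing no imaginary root, with $-{\mathcal S}\subseteq{\frak C}_W$. By Step~1 the set ${\mathcal A}$ of $\delta$-maximal weights of $W$ is a nonempty subset of ${\rm supp}(W)$ (for instance it contains the top of the $\delta$-column through the given $\lambda$). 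The plan is to apply Proposition~\ref{3.3}(ii) to ${\mathcal A}$ and ${\mathcal S}$ to produce $\mu\in{\mathcal A}$ with $(\mu+{\rm span}_{\Bbb Z^{\geq0}}{\mathcal S})\cap{\rm supp}(W)=\{\mu\}$; such a $\mu$ is simultaneously $\delta$-maximal and satisfies $\mu+\beta_0(\dot\alpha)\notin{\rm supp}(W)$ for every $\dot\alpha\in\dot T_{re}$.

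\smallskip
\noindent\emph{Step 3 (upgrade to the full statement).} I claim such a $\mu$ already works. The part $\Bbb Z^{>0}\delta$ is exactly $\delta$-maximality. Now take an arbitrary $\beta'=\dot\alpha+m\delta\in T^+_{re}$, where $m\geq m_0$, the $\delta$-coefficient of $\beta_0(\dot\alpha)$. If $m=m_0$ then $\mu+\beta'=\mu+\beta_0(\dot\alpha)\notin{\rm supp}(W)$. If $m>m_0$ and $\mu+\beta'\in{\rm supp}(W)$, then applying $-\beta_0(\dot\alpha)\in{\frak C}_W$ once yields $\mu+(m-m_0)\delta\in{\rm supp}(W)$ with $m-m_0\geq1$, contradicting the $\delta$-maximality of $\mu$. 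Hence $(\mu+(T^+_{re}\cup\Bbb Z^{>0}\delta))\cap{\rm supp}(W)=\emptyset$, as required.

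\smallskip
\noindent\emph{Main obstacle.} Steps~1 and~3 are routine; the crux is the legitimacy of Step~2, namely verifying the hypothesis $({\mathcal A}+{\mathcal S})\cap{\rm supp}(M)\subseteq{\mathcal A}$ of Proposition~\ref{3.3} for the $\delta$-maximal set ${\mathcal A}$. The condition that $\nu+\beta$ again be $\delta$-maximal, i.e. $(\nu+\beta+\Bbb Z^{>0}\delta)\cap{\rm supp}(W)=\emptyset$, is immediate from $-\beta\in{\frak C}_W$. The genuinely delicate point is that one is allowed to pass from ${\rm supp}(M)$ to ${\rm supp}(W)$ along $\beta\in{\mathcal S}$, which can fail for a general $\mathcal K$-submodule and must instead be read off from the local structure of the $\frak{sl}(1,2)$- or $\frak{osp}(1,2)$-triple attached to $\beta$: here $T^+_{re}\subseteq{\frak B}_W$ (local nilpotence upward), $T^-_{re}\subseteq{\frak C}_W$ (injectivity downward), and the given $p\delta$-maximality of $\lambda$ are precisely what make the argument go through, exactly as in the proof of \cite[Proposition~3.6]{Yousofzadeh:Finite weight 2020}. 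I expect this verification, rather than Steps~1 and~3, to be where the real work lies.
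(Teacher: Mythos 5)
Your proof is correct, and in its second half it coincides with the paper's own argument: the paper also applies Proposition \ref{3.3}(ii) to the set of $\delta$-maximal weights of $W$ together with ${\mathcal S}=\{\dot{\alpha}+t_{\dot{\alpha}}\delta \mid \dot{\alpha}\in\dot{T}_{re}\}$, and its Claims 4 and 5 are exactly your Steps 2 and 3. The genuine difference is in the first half. The paper reaches nonemptiness of the $\delta$-maximal set through the auxiliary sets ${\mathcal P}$ and ${\mathcal A}$: the $p\delta$-maximality of the given $\lambda$ is consumed in Claim 1 (showing $\lambda\in{\mathcal A}$), and the finiteness of ${\mathcal P}$, which encodes $p$, drives Claim 2. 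Your Step 1 replaces all of this by the pairing trick: with $\beta=\dot{\alpha}+a\delta$ and $\gamma=-\dot{\alpha}+b\delta$ in $T^+_{re}$ and $\beta+\gamma=q\delta$, $q>0$, converting $q\delta$-columns into $\beta$-strings via $-\gamma\in T^-_{re}\subseteq{\frak C}_W$ gives $q\delta\in{\frak B}_W$, hence $\delta\in{\frak B}_W$ by Lemma \ref{3.2}(i). This is both shorter and stronger than what the paper proves (finite upward $\delta$-columns for \emph{every} weight of $W$, not only for weights in ${\mathcal A}$), and it exposes that for $\zeta(\delta)>0$ the $(\lambda,p)$-hypothesis is needed only to guarantee ${\rm supp}(W)\neq\emptyset$. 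One caveat: your parenthetical claim that the case $\zeta(\delta)<0$ is ``analogous'' is not right as stated, since your trick then bounds only downward columns. In fact that case is vacuous: if $\zeta(\delta)<0$, then for all large $m$ both $\dot{\alpha}+m\delta$ and $-\dot{\alpha}+m\delta$ lie in $T^-_{re}\subseteq{\frak C}_W$, so every $\nu\in{\rm supp}(W)$ satisfies $\nu+N\delta\in{\rm supp}(W)$ for all sufficiently large $N$, contradicting the existence of $(\lambda,p)$. (The paper's ``use $-\delta$ instead of $\delta$'' is equally loose on this point.)

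Your ``main obstacle'' paragraph, however, misjudges where the difficulty lies. The closure property you verified --- if $\nu$ is $\delta$-maximal, $\beta\in{\mathcal S}$ and $\nu+\beta\in{\rm supp}(W)$, then $\nu+\beta$ is again $\delta$-maximal, by $-\beta\in{\frak C}_W$ --- is precisely what the paper's Claim 4 checks before invoking Proposition \ref{3.3}(ii); the paper never verifies the ${\rm supp}(M)$-version either. The occurrence of ${\rm supp}(M)$ in the hypothesis of Proposition \ref{3.3} is a looseness of that statement as transcribed from \cite{Yousofzadeh:Finite weight 2020}: the proof behind it runs entirely inside ${\rm supp}(W)$ (one exhausts the finitely many directions of ${\mathcal S}$ one at a time, using ${\mathcal S}\subseteq{\frak B}_W$ to terminate each string inside the given set and $-{\mathcal S}\subseteq{\frak C}_W$ to keep already-exhausted directions exhausted), and only the ${\rm supp}(W)$-closure is used. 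In particular, no $\frak{sl}(1,2)$- or $\frak{osp}(1,2)$-structure theory enters, and the $p\delta$-maximality of $\lambda$ plays no role whatsoever in this verification; in the paper that hypothesis is used only in Claims 1 and 2, which your Step 1 bypasses. So the ``real work'' you anticipated in Step 2 is not there: your argument is complete to the same standard as the paper's, and, apart from the $\zeta(\delta)<0$ caveat above, it is the cleaner route.
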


\begin{proof}
Assume that $\zeta (\delta)> 0$. 
Fix $p$ and $\lambda$ as mentioned.
	Let  $\dot{\beta}\in \dot R_{re}$. Then 
	for large enough $k>0$, we have
	$$0<\zeta (\dot{\beta})+k\zeta (\delta)=\zeta (\dot{\beta}+k\delta).$$
	Throughout the proof, $t_{\dot{\beta}}$ shall denote the smallest positive integer with the property $\zeta (\dot{\beta}+t_{\dot{\beta}}\delta)>0$. 
 Define
	\begin{align*}
		&{\mathcal P}:=\{\dot{\alpha}+(t_{\dot{\alpha}}+s)\delta~|~
		\dot{\alpha}\in \dot{T}_{re},~ 0\leq s\leq p\}\cap T,\\
		&{\mathcal S}:=\{\dot{\alpha}+t_{\dot{\alpha}}\delta~|~\dot{\alpha}\in \dot{T}_{re}\},\\
		&{\mathcal A}:=\left\{\mu\in {\rm supp}(W)~|~
		\{\alpha \in T_{re}^+~|~
		\mu+\alpha\in{\rm supp}(W)\}\subseteq {\mathcal P}\right\}.
	\end{align*}
	Clearly, ${\mathcal S}\subseteq {\mathcal P}\subseteq T_{re}^+$. Furthermore, if $\mu_0\in\mathcal A$, 
	then one can actually infer that the set
	$$\{\alpha \in T^+_{re}~|~ \mu_0 +\alpha \in 
	{\rm supp}(W)\}\quad\quad\quad(*)$$
	is finite. We now break the proof into the following claims.
	
	{\bf Claim 1.} $\mathcal A \neq \emptyset$. To see this, we show that  $\lambda \in \mathcal A$. Let $\alpha \in T_{re}^+$ with 
	$\lambda +\alpha \in {\rm supp}(W)$. We must prove that 
	$\alpha \in \mathcal P$. Since 
	$\alpha \in T_{re}^+$, we can assume that 
	$\alpha =\dot{\alpha}+m\delta$ for some $\dot{\alpha}\in \dot{T}_{re}$ and $m\in \Bbb Z$ with $m\geq t_{\dot{\alpha}}$.
	We can write $m-t_{\dot{\alpha}}=pq+r$ where $q\in \Bbb Z^{\geq 0}$ and 
	$0\leq r\leq p-1$. Assume that $q\neq 0$.
	Then, we have
	\begin{align*}
		&\lambda +\dot{\alpha}+(t_{\dot{\alpha}}+r)\delta +qp\delta=
		\lambda +\dot{\alpha}+t_{\dot{\alpha}}\delta +(m-t_{\dot{\alpha}})\delta=\\
		&\lambda +\dot{\alpha}+(t_{\dot{\alpha}} +m-t_{\dot{\alpha}})\delta=
		\lambda +(\dot{\alpha}+m\delta)=\lambda +\alpha \in {\rm supp}(W).
	\end{align*}
	Clearly, $-\left(\dot{\alpha}+(t_{\dot{\alpha}}+r)\delta\right)\in T^-_{re}$. 
	By the assumption 
	$T^-_{re}\subseteq \frak C_M$, hence the element
	\begin{align*}
		&-\left(\dot{\alpha}+(t_{\dot{\alpha}}+r\right)\delta)+\lambda +\alpha=
		-(\dot{\alpha}+(t_{\dot{\alpha}}+r)\delta)+\lambda +\dot{\alpha}+(t_{\dot{\alpha}}+r)\delta +qp\delta=\lambda +qp\delta  
	\end{align*}
	is contained in ${\rm supp}(W)$. On the other hand, $pq>0$. It follows that $\lambda +qp\delta \in (\lambda +\Bbb Z^{>0}p\delta)\cap 
	{\rm supp}(W)=\emptyset$, a contradiction. Thus $q=0$. This implies that 
	$\alpha =\dot{\alpha}+(t_{\dot{\alpha}}+r)\delta \in \mathcal P$, and so $\lambda \in \mathcal A$. 
	
	{\bf Claim 2.} For any $\mu \in \mathcal A$, the set 
	$$Q:=\{m\delta ~|~ m\in \Bbb Z^{>0},~~\mu+m\delta \in {\rm supp}(W)\}$$
	is finite. Suppose on the contrary that for some $\mu_0\in \mathcal A$,
	there are infinitely many $m\delta \in \Bbb Z^{>0}\delta$ with 
	$$\mu_0+m\delta \in {\rm supp}(W).$$
Suppose that $\dot\a_*\in \dot T_{re}$. Then, $\dot\a_*+t_{\dot\a_*}\d\in T^+_{re}$.
	Thus, by choosing infinitely many
$n\d\in Q$ with
$n\geq  t_{\dot\a_*}$, we have $\mu_0+n\d\in {\rm supp}(W)$. As
$-(\dot\a_*+t_{\dot\a_*}\d)\in T^-_{re} \subseteq {\frak C}_M$, we get 
$$-(\dot\a_*+t_{\dot\a_*}\d)+\mu+n\d=\mu+(-\dot\a_*+(n-t_{\dot\a_*}))\d\in {\rm supp}(W)$$
for infinitely many $n$. So, 
$-\dot\a_*+(n-t_{\dot\a_*})\d\in T^+_{re}$
 for sufficiently large numbers $n$.
	This contradicts the fact $(*)$.
	
	{\bf Claim 3.} There is $\eta\in {\rm supp}(W)$ such that $\eta+m\delta \not \in {\rm supp}(W)$ for all $m\in \Bbb Z^{>0}$. 
	To see this, by Claim 1, pick $\mu_0 \in {\mathcal A}\subseteq {\rm supp}(W)$. By 
	Claim 2, we can choose the greatest $N\in \Bbb Z^{>0}$ such that
	$\mu_0+N\delta \in {\rm supp}(W)$. Set $\eta=\mu_0+N\delta$. It is clear that 
	$\eta+m\delta \not\in {\rm supp}(W)$ for all $m\in\Bbb Z^{>0}$.

	{\bf Claim 4.} There is $\eta\in X$ such that 
	$(\eta+{\rm span}_{\Bbb Z^{\geq 0}}{\mathcal S})\cap  {\rm supp}(W)=\{\eta\}$ where
	$$X:=\{\mu\in {\rm supp}(W)~|~\forall m\delta \in Z^{>0}\delta,~~
	\mu+m\delta\not \in  {\rm supp}(W)\}.$$
First, we note that, by Claim 3, $X$ is a nonempty set.  
	We apply Proposition  \ref{3.3}(ii) to prove this claim. 
	Assuming this proposition, we must have 
	$$(\eta+{\rm span}_{\Bbb Z^{\geq 0}}{\mathcal S})\cap  {\rm supp}(W)=\{\eta\},$$
	as claimed. Now, let us show how to apply this proposition. 
		It is clear that 
	$\mathcal S$ is a nonempty subset of $T$. 
	Note that $R_{im}=\Bbb Z\delta$ and $R_{im}^\pm:= R_{im}\cap R^\pm
	={\Bbb Z}^{{\gtrless} 0}\delta$, because $\zeta (\delta)>0$,  and $\zeta (k\delta)=k\zeta (\delta)$ for all
	$k\in\Bbb Z$. Thus, $\mathcal S$ does not contain imaginary roots.
	According to the assumption and material mentioned above, we have ${\mathcal S}\subseteq {\mathcal P}\subseteq T_{re}^+ \subseteq \frak B_W$ and $T_{re}^-\subseteq \frak C_M$, and hence $-{\mathcal S}\subseteq \frak C_M$.  It only remains to show that 
	$$(X+{\mathcal S})\cap {\rm supp}(W)\subseteq X$$
	is true so that we must meet all the conditions of Proposition  \ref{3.3}(ii). Assuming the contrary,  there are  $\eta \in X$ and  $\beta \in \mathcal S$ such that $\eta +\beta \in {\rm supp}(W)$ and $\eta +\beta \not \in X$. Hence, there is  $ m\delta \in Z^{>0}\delta$ with $\eta +\beta +m\delta \in {\rm supp}(W)$. Since 
	$-\beta \in \frak C_W$, we can deduce that  
	$$\eta +m\delta=\eta +\beta +m\delta-\beta \in {\rm supp}(W),$$
	which contradicts the fact that $\eta \in X$.
	
	{\bf Claim 5.} There is  $\mu \in {\rm supp}(W)$ such that 
	$$\left(\mu +(T^+_{re}\cup \Bbb Z^{>0}\delta)\right)\cap {\rm supp}(W)=\emptyset.$$
	To prove this, we can deduce from Claim 4 that there is at least one $\mu$ with the following property
	$$\left(\mu+(\Bbb Z^{>0}\delta \cup{\rm span}_{\Bbb Z^{\geq 0}}{\mathcal S})\right)\cap  {\rm supp}(W)=\{\mu\}\quad\quad\quad (**)$$
	Now, suppose on the contrary that there is  
	$\alpha \in T^+_{re}\cup \Bbb Z^{>0}\delta$ with $\mu +\alpha \in {\rm supp}(W)$.
	If $\alpha \in \Bbb Z^{>0}\delta$, then $\mu +\alpha =\mu$ by $(**)$. This implies that $\alpha =0$, a contradiction.
	Hence $\alpha \in T^+_{re}$, and so 
	$\alpha =\dot{\alpha}+m\delta$ for some $\dot{\alpha}\in \dot{T}^\times _{re}$ and $m\geq t_{\dot{\alpha}}$.  Since $-(\dot{\alpha}+t_{\dot{\alpha}}\delta)
	\in T^-_{re}\subseteq \frak C_W$, we have
	$$
	\mu+(m-t_{\dot{\alpha}})\delta=\mu+\alpha-(\dot{\alpha}+t_{\dot{\alpha}}\delta)\in {\rm supp}(W).$$
	Thus $\mu+(m-t_{\dot{\alpha}})\delta =\mu$ by $(**)$. This implies that $m=t_{\dot{\alpha}}$. Hence, $\alpha \in \mathcal S$. This  again leads us to
	$\mu+\alpha=\mu$ by $(**)$, a contradiction. 
	Therefore,  there is no
	$\alpha \in T^+_{re}\cup \Bbb Z^{>0}\delta$ with $\mu +\alpha \in {\rm supp}(W)$, and the proof is complete.\\
 If $\zeta (\d)<0$, then we use $-\d$ instead of $\d$ and repeat the above process.
\end{proof}

\begin{Pro}\label{3.5}
	Assume that $M$ is an $\frak L$-module, $\zeta$ is a linear functional on ${\rm span}_{\Bbb R}R$ with corresponding triangular decomposition $R=R^+\cup R^0\cup R^-$, and  $\zeta (\delta)>0$ {\rm (}$\zeta (\sigma)=0$, for the case $\frak L=A(n,n)^{(1)}${\rm )}. Set
$${\mathcal A}:=\{0\neq v\in M~|~ {\frak L}^\alpha v=\{0\}, \forall \alpha \in R^+\cap (R_{re}\cup \Bbb Z \d)\}$$
and 
$$B:=\{v\in {\mathcal A}~|~\forall \dot{\alpha}\in \dot{R}_{ns}~~\exists N\in \Bbb Z^{\geq 0}~;~{\frak L}^{\dot{\alpha}+r_{\dot\a}\sigma+n\delta}v=\{0\}~~~~(\forall n\geq N)\}.
$$
in which for $\dot{\alpha}\in \dot{R}_{ns}$, 
$$
r_{\dot\a}:=\begin{cases}
1&\quad \dot\a=\dot\e_i-\dot\d_j ~~{\rm and}~~\frak L=A(n,n)^{(1)},\\
-1&\quad \dot\a=\dot\d_j-\dot\e_i ~~{\rm and}~~\frak L=A(n,n)^{(1)},\\
0&\quad {\rm otherwise}.
\end{cases}
$$
If $B$ is nonempty, then 
	$$M^{\frak L^+}=\{v\in M~|~\frak L^\alpha v=\{0\}, ~~\forall \alpha \in R^+\}\neq \{0\}.$$	
\end{Pro}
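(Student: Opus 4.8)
The plan is to start from an arbitrary $v\in B$ and, staying inside $B$ at every step, keep enlarging the collection of positive nonsingular roots annihilating the current vector until none remain; the terminal vector will then lie in $M^{\frak L^+}$. What turns this into an honest induction is that for every $v\in B$ the set
$$D(v):=\{\gamma\in R^+\cap R_{ns}\mid \frak L^\gamma v\neq\{0\}\}$$
is finite. Indeed, there are only finitely many $\dot\alpha\in\dot R_{ns}$, and the defining condition of $B$ kills $\frak L^{\dot\alpha+r_{\dot\alpha}\sigma+n\delta}v$ for all large $n$, while positivity of the root (together with $\zeta(\delta)>0$, $\zeta(\sigma)=0$) bounds $n$ from below, leaving only finitely many surviving values. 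The measure I will make strictly decrease is thus $|D(v)|\in\Bbb Z^{\geq 0}$, and $D(v)=\emptyset$ together with $v\in\mathcal A$ will mean exactly $v\in M^{\frak L^+}$.

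First I would record two structural facts, to be read off Tables \ref{Table 2}--\ref{Table 4}: for a nonsingular root $\gamma$ the space $\frak L^\gamma$ is one-dimensional, say spanned by $e_\gamma$, and $2\gamma\notin R$, so $\frak L^{2\gamma}=\{0\}$; since $[e_\gamma,e_\gamma]\in\frak L^{2\gamma}$, the module identity forces $e_\gamma^2$ to act as zero. Now fix $v\in B$ with $D(v)\neq\emptyset$, choose $\gamma\in D(v)$ with $\zeta(\gamma)$ maximal, and set $v':=e_\gamma v$, which is nonzero because $\frak L^\gamma=\Bbb C e_\gamma$ and $\frak L^\gamma v\neq\{0\}$. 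I would then check that $v'\in\mathcal A$: for $\beta\in R^+\cap(R_{re}\cup\Bbb Z\delta)$ and $f\in\frak L^\beta$ one has $fv'=\pm e_\gamma(fv)+[f,e_\gamma]v=[f,e_\gamma]v$, with $[f,e_\gamma]\in\frak L^{\beta+\gamma}$ and $\zeta(\beta+\gamma)>\zeta(\gamma)$; by maximality $\beta+\gamma$ cannot be a positive nonsingular root on which $v$ fails to vanish, and in the remaining cases ($\beta+\gamma$ a positive real or imaginary root, or not a root at all) one has $\frak L^{\beta+\gamma}v=\{0\}$ by $v\in\mathcal A$.

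Next I would verify $v'\in B$: for $\dot\alpha'\in\dot R_{ns}$ and $g\in\frak L^{\mu}$ with $\mu=\dot\alpha'+r_{\dot\alpha'}\sigma+n\delta$ and $n$ large, $gv'=\pm e_\gamma(gv)+[g,e_\gamma]v$, where $gv=0$ since $\mu$ has large $\delta$-part and $v\in B$, and $[g,e_\gamma]\in\frak L^{\mu+\gamma}$ again has large $\delta$-part, hence annihilates $v$ (by the $B$-condition if $\mu+\gamma$ is nonsingular, by $v\in\mathcal A$ if it is real or imaginary, trivially if it is not a root). Finally I would establish the strict decrease $D(v')\subsetneq D(v)$: for $\mu\in R^+\cap R_{ns}$ and $g\in\frak L^\mu$ the term $[g,e_\gamma]v$ vanishes exactly as above (maximality excludes $\mu+\gamma\in D(v)$), so $gv'=\pm e_\gamma(gv)$ and $gv'\neq\{0\}$ forces $\mu\in D(v)$, giving $D(v')\subseteq D(v)$; moreover $e_\gamma v'=e_\gamma^2v=0$, so $\gamma\notin D(v')$. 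Thus $|D(v')|\leq|D(v)|-1$, and iterating finitely many times produces a nonzero $w\in B$ with $D(w)=\emptyset$, i.e.\ $w\in M^{\frak L^+}$, which proves the proposition.

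The main obstacle is concentrated in the interplay between the $\zeta$-maximal choice of $\gamma$ and the two structural facts about nonsingular roots. The maximality is precisely what prevents any new positive nonsingular root from entering $D(v')$ (so that the second commutator term is always controlled), and $e_\gamma^2=0$ is what makes $\gamma$ leave $D$; this last point is where the special geometry of the isotropic odd nonsingular roots is used, through $2\gamma\notin R$ and $\dim\frak L^\gamma=1$. Checking that both facts hold uniformly across all untwisted types is the delicate part, especially for $A(n,n)^{(1)}$, where the nonsingular roots carry the extra $\sigma$-shift and one must confirm that the $\sigma$- and $\delta$-coefficients of sums such as $\beta+\gamma$ and $\mu+\gamma$ remain compatible with the root system as tabulated; everything else is a routine application of the module identity.
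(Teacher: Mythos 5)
Your proof is correct and follows essentially the same strategy as the paper's: your main step (hit $v$ with a root vector attached to a $\zeta$-maximal positive nonsingular root in $D(v)$, then use the supermodule identity to check that the only surviving term is the commutator one, which dies by maximality together with the $\mathcal A$- and $B$-conditions) is precisely the content of Claim~2 of the paper's proof, where the conditions (1)--(3) imposed on $\alpha\in C_v$ play the role of your global $\zeta$-maximality on $D(v)$.

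The one point where you genuinely diverge is termination. The paper stops after Claims~1--2 and imports Claims~3--6 of \cite[Proposition 3.8]{Yousofzadeh:Finite weight 2020} wholesale; you instead run an explicit induction on the strictly decreasing counter $|D(v)|$, and this is what forces you to invoke the two structural facts: $\dim \frak L^{\gamma}=1$ for $\gamma\in R_{ns}$, and $2\gamma\notin R$, whence $e_\gamma^2$ acts as zero on $M$ (since $\gamma$ is odd, $2e_\gamma^2$ acts as $[e_\gamma,e_\gamma]\in\frak L^{2\gamma}=\{0\}$). Both facts do hold uniformly for the untwisted types, but be aware they are not literally ``read off'' Tables \ref{Table 2}--\ref{Table 4}, which list roots and not multiplicities: $2\gamma\notin R$ is checked type by type from the tables, while one-dimensionality of nonsingular root spaces comes from Kac's structure theory ($\frak g_{\dot\alpha}$ is one-dimensional for every basic classical simple $\frak g$ except $A(1,1)$, and for $A(n,n)^{(1)}$ the $\sigma$-component built into the nonsingular roots separates the contributions of $\frak g_+$ and $\frak g_-$, so even the $n=1$ case has one-dimensional root spaces). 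With that justification supplied, your argument is complete and has the advantage of being self-contained, whereas the paper's appeal to the twisted-case claims keeps its proof parallel to the twisted setting at the cost of not being readable on its own.
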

\begin{proof}
	Note that
	$${\mathcal A}=\{0\neq v\in M~|~ \frak L^{n\delta}v=
	{\frak L}^\alpha v=0, \forall \alpha \in R^+\cap R_{re},~n\in \Bbb Z^{>0}\}.$$	
	Let  $0\neq \beta\in R$. If $\b \in R_{ns}$, then 
	$\beta=\dot{\beta}+r_{\dot\b}\sigma+n\delta$ 
	in which $\dot{\beta}\in \dot{R}_{ns}$ and $n\in \Bbb Z$. 
	Since $\zeta (\delta)>0$, we can find   $k\in \Bbb Z$ such that
	$$0<\zeta (\beta)+k\zeta (\delta)=\zeta (\beta+k\delta).$$
 The same argument holds for $\b \not\in R_{ns}$. Throughout the proof, $m_{\dot{\beta}}$ shall denote the smallest integer with the property 
	$$\zeta (\dot{\beta}+m_{\dot{\beta}}\delta)>0,$$
	and we define $\b_{\dot{\a}}:=\dot{\a}+r_{\dot\a}\sigma +m_{\dot{\a}}\delta$,
	and  we set $\Phi$ as the set of all such $\b_{\dot{\a}}$s. 
	We now break the proof into the following claims.
	
	{\bf Claim 1.} $B$ is equal to 
	$$B':=\{v\in{\mathcal A}~|~\exists N\in \Bbb Z^{\geq 0};~ {\frak L}^{\alpha+n\delta}v=0~~
	(\alpha\in \Phi\cap R_{ns},~n\geq N)\}.$$
	To see this, let $v\in B$. Thus, for each $\dot{\alpha}\in \dot{R}_{ns}$, there is 
	 $N_{\dot{\alpha}}\in \Bbb Z^{\geq 0}$ with ${\frak L}^{\dot \a+r_{\dot\a}\sigma+n\delta}v=0$ for all $n\geq N_{\dot{\alpha}}$.
	Set $N:=\max\{N_{\dot{\alpha}}-m_{\dot{\alpha}}~|~\dot{\alpha}\in \dot{R}_{ns}\}$. Then, 
	${\frak L}^{\beta_{\dot{\alpha}}+n\delta}v=0$ for all $n\geq N$ and 
	$\dot{\alpha}\in \dot{R}_{ns}$. This means $B\subseteq B'$.
	Conversely, suppose $v\in B'$ and pick $N\in \Bbb Z ^{\geq 0}$ with
	${\frak L}^{\beta_{\dot{\alpha}}+n\delta}v=0$ for all $n\geq N$ and 
	$\dot{\alpha}\in \dot{R}_{ns}$.
	Hence,  for each $\dot{\alpha}\in \dot{R}_{ns}$ and 
	$n\geq N+m_{\dot{\alpha}}$,
	we have ${\frak L}^{\beta_{\dot{\alpha}}+n\delta}v=0$, that is $v\in B$.
	
	Claim 1 leads us to define the 
	$$n_v:=\min\{N\in \Bbb Z^{\geq 0}~|~{\frak L}^{{\alpha}+n\delta}v=0~~(\alpha\in \Phi\cap R_{ns},~n\geq N)\}$$
	and
	$$C_v:=\{\alpha+t\delta~|~\alpha\in \Phi\cap R_{ns},~0\leq t<n_v\} \subseteq R_{ns}.$$
	
	{\bf Claim 2.} Assume $v\in B$, $N\in \Bbb Z^{\geq 0}$ and $\alpha \in C_v$ with the following properties
	\begin{itemize}
		\item [(1)] ${\frak L}^{{\alpha}+N\delta}v\neq 0$,
		\item[(2)] If $\alpha'\in C_v$ and ${\frak L}^{{\alpha'}+N\delta}v\neq 0$, then $\zeta (\alpha')\leq \zeta (\alpha)$,
		\item [(3)] For all $m\in \Bbb Z^{\geq 1}$ and $\alpha'\in C_v$,
		${\frak L}^{{\alpha'}+N\delta+m\delta}v=0$.
	\end{itemize}
	Then for $0\neq w\in {\frak L}^{{\alpha}+N\delta}v$, $w\in B$.\\	
	To prove this, we first show that ${\frak L}^{m\delta}w=0$ for all $m\in \Bbb Z^{\geq 1}$. Now, we have 
	$${\frak L}^{m\delta}w\subseteq {\frak L}^{m\delta}{\frak L}^{\alpha+N\delta}v\subseteq \underbrace{{\frak L}^{\alpha+(N+m)\delta}v}_{=0,~{\rm by~(3)}}+
	{\frak L}^{\alpha +N\delta}\underbrace{{\frak L}^{m\delta}v}_{=0 ~(v\in \mathcal A)}=0.
	$$
	Now, let $\beta \in R_{re}$ with $\zeta (\beta)>0$. 
	Since $v\in \mathcal A$, we have ${\frak L}^{\beta}v=0$. Thus
	$${\frak L}^{\beta}w\subseteq {\frak L}^{\beta}{\frak L}^{\alpha +N\delta}v\subseteq {\frak L}^{\alpha+\beta+N\delta}v+{\frak L}^{\alpha+N\delta}{\frak L}^{\beta}v={\frak L}^{\alpha+\beta+N\delta}v.$$
	If $\alpha+\beta+N\delta$ is not a root, then ${\frak L}^{\alpha+\beta+N\delta}v=0$, and so ${\frak L}^{\beta}w=0$. Assume that 
	$\alpha+\beta+N\delta \in R_{re}$. 
	Since $\alpha \in C_v$, one can show that $\zeta (\alpha)>0$. On the other hand, $\zeta(\beta)>0$ by the assumption. Thus 
	$$\zeta (\alpha+\beta+N\delta)=\zeta(\alpha)+\zeta(\beta)+N\zeta (\delta) >0.$$
	It follows that $\alpha+\beta+N\delta\in R^+\cap R_{re}=R_{re}^+$, and hence ${\frak L}^{\alpha+\beta+N\delta}v=0$ (because $v\in\mathcal A$). This implies that ${\frak L}^{\beta}w=0$.\\	
	Now, assume that $\alpha+\beta+N\delta\in R_{ns}$. This forces that $\alpha+\beta \in R_{ns}$. Thus $\alpha +\beta =\dot{\gamma}+ r_{\dot\gamma}\sigma +k\delta$ where $\dot{\gamma}\in \dot{R}_{ns}$ and $k\in \Bbb Z$. Since $\zeta (\alpha+\beta)>0$,   we can write 
	$\alpha +\beta=\dot{\gamma}+r_{\dot\gamma}\sigma+(m_{\dot{\gamma}}+m)\delta$ where $m\in \Bbb Z^{\geq 0}$. If 
	$m=0$, then $\alpha +\beta \in C_v$. However, 
	$\zeta (\alpha +\beta)=\zeta (\alpha)+\zeta (\beta)> \zeta (\alpha)$. 
	By (2), we get ${\frak L}^{\alpha+\beta+N\delta}v= 0$, and so ${\frak L}^{\beta}w=0$. If $m>0$, then by (3), ${\frak L}^{\alpha+\beta+N\delta}v= 0$, and so ${\frak L}^{\beta}w=0$.
	Thus, ${\frak L}^{\beta}w=0$ for all $0\neq \beta \in R^+\cap R_{re}$.
	
	Now, we show that $w\in B=B'$ (see Claim 1). Thus by the above argument, it is enough to prove that there is a positive integer $p$ such that for all $\eta \in \Phi \cap R_{ns}$ and $n\geq p$, ${\frak L}^{\eta+n\delta}w=0$. Since $v\in B=B'$, we can choose $p\in \Bbb Z^{>0}$ such that ${\frak L}^{\eta+n\delta}v=0$ for all $\eta \in \Phi \cap R_{ns}$ and $n\geq p$. Hence for all $\eta \in \Phi \cap R_{ns}$ and $n\geq p$,
	$${\frak L}^{\eta+n\delta}w\subseteq {\frak L}^{\eta+n\delta}{\frak L}^{\alpha+N\delta}v\subseteq{\frak L}^{\eta+\alpha+n\delta+N\delta}v+{\frak L}^{\alpha+N\delta}\underbrace{{\frak L}^{\eta+n\delta}}_{=0}v={\frak L}^{\eta+\alpha+n\delta+N\delta}v.$$
	Since $C_v\subseteq R_{ns}$, it follows that $\alpha$ and $\eta$ belong to $R_{ns}$. 
If $\eta+\alpha+n\delta+N\delta$ is not a root, then ${\frak L}^{\eta+\alpha+n\delta+N\delta}v=0$, and so 
	${\frak L}^{\eta+n\delta}w=0$. Hence, assume that $\eta+\alpha+n\delta+N\delta\in R$. 
 We know that the sum of two nonsingular roots is either a real root or an imaginary root. 
 If $\eta+\alpha+n\delta+N\delta\in R_{im}$, then we must have 
 $\eta+\a=0$.
Since $\a,\eta\in \Phi$, it follows that $\zeta(\a+\eta)>0$, which contradicts the condition $\a+\eta=0$.
Thus,  $\eta+\alpha+n\delta+N\delta\in R_{re}$. On the other hand,  
	$\zeta (\eta+\alpha+n\delta+N\delta)>0$, that is $\eta+\alpha+n\delta+N\delta\in R^+_{re}$. This implies that  ${\frak L}^{\eta+\alpha+n\delta+N\delta}v=0$, because $v\in \mathcal A$. Therefore, 
	${\frak L}^{\eta+n\delta}w=0$.
	
	Now, just repeat Claim 3, Claim 4, Claim 5 and Claim 6 of 
	\cite[Proposition 3.8]{Yousofzadeh:Finite weight 2020}.
\end{proof}

We recall that a weight module $H$ over untwisted affine Lie superalgebra  $\frak L$ has {\it shadow}, if 
$$R_{re} =R^{in}\cup R^{ln},~~
R^{ln}={\frak B}_H\cap R_{re},~~{\rm and}~~
R^{in}={\frak C}_H\cap R_{re}.$$

The following provides an example of modules having shadows.

\begin{Pro}\label{3.6}
	The following statements hold.
	\begin{itemize}
		\item [(i)] If $\frak L$-module $M$ is simple, then 
		$R_{re}=R^{in}\cup R^{ln}$.
		\item [(ii)] If finite weight $\frak L$-module $M$ satisfies the condition $R_{re}=R^{in}\cup R^{ln}$, then $M$ has shadow. 
	\end{itemize}
In particular, all simple finite weight $\frak L$-modules have shadow.
\end{Pro}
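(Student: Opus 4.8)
The plan is to treat the two parts separately and then combine them. For part (i), fix $\beta\in R_{re}$ and a nonzero root vector $x\in{\frak L}^\beta$; since real root spaces of an affine Lie superalgebra are one-dimensional, injectivity and local nilpotency are genuine properties of $\beta$ (independent of the scalar chosen for $x$). The point indicated just before the statement is that
$$N:=\{v\in M\mid x^m v=0\ \text{for some}\ m\in{\Bbb N}\}$$
is an ${\frak L}$-submodule. To justify this I would first note that, because $\beta$ is real, every $\beta$-string in $R$ is finite, so $\operatorname{ad}(x)$ acts locally nilpotently on ${\frak L}$; then, for $z\in{\frak L}^\gamma$ and $v\in N$, the super-Leibniz expansion of $x^n(z\cdot v)$ as a signed combination of $(\operatorname{ad}(x)^j z)\cdot(x^{n-j}v)$ shows $x^n(z\cdot v)=0$ once $n$ is large, whence $z\cdot v\in N$. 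Simplicity of $M$ then forces $N=0$ or $N=M$: the first case says $x$ is injective, i.e. $\beta\in R^{in}$, and the second says $x$ is locally nilpotent, i.e. $\beta\in R^{ln}$. Hence $R_{re}=R^{in}\cup R^{ln}$, and these two sets are moreover disjoint whenever $M\neq 0$.

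For part (ii) I would record three facts about a finite weight module $M$ satisfying $R_{re}=R^{in}\cup R^{ln}$. \emph{First}, $R^{in}\subseteq{\frak C}_M$: if $\beta\in R^{in}$ and $\lambda\in\operatorname{supp}(M)$, then $x$ restricts to an injection $M^\lambda\hookrightarrow M^{\lambda+\beta}$, so $\lambda+\beta\in\operatorname{supp}(M)$, giving $\beta+\operatorname{supp}(M)\subseteq\operatorname{supp}(M)$. \emph{Second}, $R^{ln}\subseteq{\frak B}_M$: this is the substantive step. Here I would restrict $M$ to the rank-one subsuperalgebra ${\frak P}=\langle x,y,h\rangle$ (isomorphic to ${\frak{sl}}(1,2)$ or ${\frak{osp}}(1,2)$) attached to $\beta$, fix $\lambda$, and study the string $\bigoplus_{k}M^{\lambda+k\beta}$, whose weight spaces are finite-dimensional. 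Using the Casimir operator of ${\frak P}$, which is locally finite on this string precisely because the weight spaces are finite-dimensional, one decomposes the string into generalized Casimir-eigenspaces, each a ${\frak P}$-submodule; on each such eigenspace the raising operator $x$ becomes an isomorphism between consecutive weight spaces once the weight is large, so local nilpotency of $x$ forces each eigenspace to be bounded above. The top-weight vectors are then genuine highest weight vectors, and their distinct Casimir eigenvalues would — if highest weights of arbitrarily large size occurred — force some fixed weight space of $M$ to be infinite-dimensional, contradicting the finite weight hypothesis. Hence the $\beta$-string is bounded above, i.e. $\{k\in{\Bbb Z}^{>0}\mid\lambda+k\beta\in\operatorname{supp}(M)\}$ is finite, so $\beta\in{\frak B}_M$. \emph{Third}, ${\frak B}_M\cap{\frak C}_M\cap R_{re}=\emptyset$ when $M\neq 0$: if $\beta\in{\frak C}_M$ then $\lambda+k\beta\in\operatorname{supp}(M)$ for all $k\geq 0$, so that counting set is infinite and $\beta\notin{\frak B}_M$.

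Finally I would assemble these. Since $R^{in}$ and $R^{ln}$ partition $R_{re}$, the first and second facts give $R^{in}\subseteq{\frak C}_M\cap R_{re}$ and $R^{ln}\subseteq{\frak B}_M\cap R_{re}$; conversely, if $\beta\in{\frak B}_M\cap R_{re}$ lay in $R^{in}$ it would lie in ${\frak C}_M$ by the first fact, contradicting the third, so $\beta\in R^{ln}$, and symmetrically ${\frak C}_M\cap R_{re}\subseteq R^{in}$. Combined with the decomposition from part (i) this yields $R^{ln}={\frak B}_M\cap R_{re}$ and $R^{in}={\frak C}_M\cap R_{re}$, i.e. $M$ has shadow. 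The ``in particular'' claim is then immediate: a simple finite weight module satisfies the hypothesis of (i), hence $R_{re}=R^{in}\cup R^{ln}$, and (ii) applies. The main obstacle is the second fact, $R^{ln}\subseteq{\frak B}_M$; everything else is formal, and it is exactly this rank-one analysis that uses finite-dimensionality of the weight spaces and leans on the representation theory of ${\frak{sl}}(1,2)$ and ${\frak{osp}}(1,2)$ developed in \cite{Yousofzadeh:Finite weight 2020}.
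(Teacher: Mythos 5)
Your proposal is correct and takes essentially the same route as the paper: the paper's own proof of Proposition \ref{3.6} simply says to repeat the proof of \cite[Proposition 4.4]{Yousofzadeh:Finite weight 2020}, and that argument is precisely what you reconstruct --- part (i) via the submodule $N$ of locally $x_\beta$-nilpotent vectors together with simplicity, and part (ii) via the formal facts $R^{in}\subseteq{\frak C}_M$ and ${\frak B}_M\cap{\frak C}_M\cap R_{re}=\emptyset$ combined with the rank-one reduction to $\frak{sl}_2$- and $\frak{osp}(1,2)$-strings with finite-dimensional weight spaces (Casimir/highest-weight analysis) to obtain $R^{ln}\subseteq{\frak B}_M$. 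The only substantive step, as you correctly identify, is that string-boundedness argument, and your sketch of it is sound.
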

\begin{proof}
	Repeat the proof of \cite[Proposition 4.4]{Yousofzadeh:Finite weight 2020}.
\end{proof}

\begin{Lem}\label{3.7}
	Suppose that $M$ has shadow and $\alpha \in R_{re}$. Then the following statements hold. 
	\begin{itemize}
		\item [(i)] $\alpha \in {\frak C}_M$ if and only if $t\alpha \in {\frak C}_M$ for some $t\in \Bbb Z^{>0}$.
		\item[(ii)] If either $\pm\alpha\in R^{ln}$ or 
		$\pm\alpha  \in R^{in}$, then for $\gamma \in R_{re}$, $\gamma \in R^{in}$ if and only if $r_\alpha (\gamma)\in R^{in}$.
	\end{itemize}	
\end{Lem}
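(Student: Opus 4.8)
The plan is to push everything through the set $\frak C_M$ and then read off the conclusion from the shadow identities $R^{in}=\frak C_M\cap R_{re}$ and $R^{ln}=\frak B_M\cap R_{re}$. The one elementary fact I will use repeatedly is that a nonzero element of $\frak C_M$ cannot lie in $\frak B_M$: if $\eta\in\frak C_M$, then iterating $\eta+{\rm supp}(M)\subseteq{\rm supp}(M)$ shows $\lambda+k\eta\in{\rm supp}(M)$ for every $k\in\Bbb Z^{>0}$ and every $\lambda\in{\rm supp}(M)$, and since $M\neq0$ the set ${\rm supp}(M)$ is nonempty, so the witnessing set in the definition of $\frak B_M$ is infinite.

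For part (i) the forward direction is trivial (take $t=1$), so I would concentrate on the converse. Assuming $t\alpha\in\frak C_M$, I would use the shadow hypothesis to split on whether $\alpha\in R^{in}$ or $\alpha\in R^{ln}$. The first case is exactly the desired conclusion $\alpha\in\frak C_M$; in the second case $\alpha\in\frak B_M$, so Lemma \ref{3.2}(i) gives $t\alpha\in\frak B_M$, which contradicts $t\alpha\in\frak C_M$ by the fact just noted. Hence only the first case can occur.

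For part (ii) I would first record that $r_\alpha$ is a form-preserving involution of ${\frak H}^*$ mapping $R$ onto $R$ and $R_{re}$ onto $R_{re}$, so that $r_\alpha(\gamma)\in R_{re}$ and, by the shadow identities, it is enough to prove $\gamma\in\frak C_M\iff r_\alpha(\gamma)\in\frak C_M$; being an involution, $r_\alpha$ lets me reduce this equivalence to a single implication. Writing $r_\alpha(\gamma)=\gamma-n\alpha$ with $n=\langle\gamma,\alpha\rangle$, I would treat the two alternatives of the hypothesis separately. If $\pm\alpha\in R^{in}$, then $\pm\alpha\in\frak C_M$, so ${\rm supp}(M)$ is stable under translation by $\alpha$ and by $-\alpha$, hence under all of $\Bbb Z\alpha$; therefore $r_\alpha(\gamma)+{\rm supp}(M)=\gamma-n\alpha+{\rm supp}(M)=\gamma+{\rm supp}(M)$ and the two conditions defining membership in $\frak C_M$ are literally identical. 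If instead $\pm\alpha\in R^{ln}$, the root vectors of $\pm\alpha$ act locally nilpotently, and I would invoke Lemma \ref{3.1} to get $r_\alpha({\rm supp}(M))={\rm supp}(M)$ --- directly when $\alpha\in R_0$, and by applying it to the even real root $2\alpha$ (whose root vectors are proportional to the squares of those of $\pm\alpha$, hence locally nilpotent, and for which $r_{2\alpha}=r_\alpha$) when $\alpha\in R_1$. Applying the support-preserving bijection $r_\alpha$ to the inclusion $\gamma+{\rm supp}(M)\subseteq{\rm supp}(M)$ then yields $r_\alpha(\gamma)+{\rm supp}(M)\subseteq{\rm supp}(M)$, and the reverse implication follows symmetrically.

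The routine parts I would not belabour are the standard facts that $r_\alpha$ permutes $R$ and $R_{re}$ and the exact scalars relating $x_{2\alpha}$ to $x_\alpha^{2}$. The point that needs genuine care is the appeal to Lemma \ref{3.1} in the locally nilpotent case: it is stated for simple modules, whereas here I only know that $M$ has shadow, so I would check that its proof uses nothing beyond the local nilpotency of the two root vectors (which is precisely what makes $\theta_\alpha=e^{x}e^{-y}e^{x}$ a well-defined invertible operator intertwining the weight spaces $M^\lambda$ and $M^{r_\alpha(\lambda)}$), and hence remains valid for an arbitrary shadow module.
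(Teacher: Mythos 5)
Your proposal is correct and takes essentially the same route as the paper's own proof: for (i) the shadow dichotomy combined with the incompatibility of $\frak B_M$ and $\frak C_M$ (the paper just re-derives this incompatibility by iterating $t\alpha\in\frak C_M$ where you cite Lemma \ref{3.2}(i)), and for (ii) the reduction of odd real roots to $2\alpha$ with $r_{2\alpha}=r_\alpha$, translation-stability of ${\rm supp}(M)$ in the injective case (the paper phrases this via Lemma \ref{3.2}(ii)), and the reflection operator $\theta_\alpha$ of Lemma \ref{3.1} in the locally nilpotent case. Your closing remark is well taken: the paper also invokes Lemma \ref{3.1} under the shadow hypothesis alone, which is legitimate precisely because its proof uses only the local nilpotency of $x$ and $y$, not simplicity.
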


\begin{proof}
	(i). If  $\alpha \in {\frak C}_M$, then by Lemma \ref{3.2}(ii), 
	$$\underbrace{\alpha+\cdots +\alpha}_{t-{\rm times}}=t\alpha\in \frak C_M$$
	for any $t\in \Bbb Z^{>0}$.	 To prove the converse, let $t\alpha \in {\frak C}_M$ for some $t\in \Bbb Z^{>0}$. Since $\alpha \in R_{re}=R^{in}\cup R^{ln}$, we have $\alpha \in R^{in}$ or 
	$\alpha \in  R^{ln}$. If $\alpha \in  R^{ln}$, then $\alpha \in \frak B_M$. This implies that there are finitely many positive integers $k_1, \cdots, k_m$ with $\eta +k_i\alpha \in {\rm supp}(M)$, $1\leq i\leq m$, for some $\eta \in {\rm supp}(M)$. As $t\alpha \in {\frak C}_M$, we have 
	$$t\alpha+\eta +k_1\alpha=\eta +(t+k_1)\alpha \in {\rm supp}(M).$$
	Again, as $t\alpha \in {\frak C}_M$, we have 
	$$t\alpha+\eta +(t+k_1)\alpha=\eta +(2t+k_1)\alpha \in {\rm supp}(M).$$
	This process can continue unabated. Thus for large enough $i\in \Bbb N$, $\eta +(it+k_1)\alpha$ is distinct from
	$\eta +k_1\alpha$, $\eta +k_2\alpha$, $\cdots$, and $\eta +k_m\alpha$, which is contradiction. Thus $\alpha \in R^{in}\subseteq \frak C_M$.\\	
	(ii). We first note that, according to Table \ref{Table 2} and  Table \ref{Table 4}, in types $B(m,n)^{(1)}$ and $G(3)^{(1)}$, some roots belong to both $R_{re}$ and $R_1$.
 Let $\a \in R_{re}\cap R_1$. It is clear that $2\a\in R_{re}\cap R_0$. On the other hand,  there are $x\in \frak L^\a$ and $y\in \frak L^{-\a}$ such that
${\rm span}_{\Bbb C}\{x, y,h := [x, y], [x, x], [y, y]\}$
is a Lie superalgebra isomorphic to $\frak {osp}(1, 2)$ with $\a(h) =2$ (we refer to the material before Lemma \ref{3.1}).
 This leads us to the $\frak{sl}_2$-triple $(\dfrac{1}{4}[x,x],\dfrac{-1}{4}[y,y], \dfrac{1}{2}h)$ for $2\a$, and consequently $r_\a=r_{2a}$. Thus, by (i), it is enough to prove the statement (2) for $\a\in R_{re}\cap R_0$.
 As well, we note that if $\a, \gamma\in R_{re}$ then $r_\a(\gamma)\in R_{re}$.\\
	Now, assume that $\pm\alpha\in R^{ln}$.
Then,
	\begin{align*}
		\gamma \in R^{in}&\Longrightarrow \gamma \in \frak C_M \Longrightarrow 
		\forall n\in \Bbb Z^{\geq 0}~\forall \lambda\in {\rm supp}
		(M),~\lambda+n\gamma \in{\rm supp}(M)\\
  &\xRightarrow{{\rm Lemma}~\ref{3.1}}
	 \forall n\in \Bbb Z^{\geq 0}~\forall \lambda\in {\rm supp}
		(M),~r_\alpha(\lambda)+nr_\alpha(\gamma) \in{\rm supp}(M)\\
		&\Longrightarrow r_\alpha(\gamma) \in \frak C_M \Longrightarrow r_\alpha(\gamma) \in R^{in}.
	\end{align*}
	Because the opposite direction of the arrows is also correct, the proof of the case $\pm\alpha\in R^{ln}$ is complete.
	
	Now, assume that $\pm\alpha\in R^{in}$. If $\gamma\in R^{in}$, then
	by Lemma \ref{3.2}(ii), $r_\alpha(\gamma)=\gamma +m\alpha \in \frak C_M$. Hence, $r_\alpha(\gamma)\in R^{in}$. Conversely, assume that
	$\gamma +m\alpha=r_\alpha(\gamma)\in R^{in}$.
  Since $\pm\alpha\in R^{in}$, we have $\pm m\a\in \frak C_M$ by Lemma \ref{3.2}(ii).
 Using Lemma \ref{3.2}(ii) again, $\gamma=\underbrace{-m\a}_{\in \frak C_m}+\underbrace{\gamma +m\alpha}_{\in \frak C_m}\in \frak C_M$.
 So, $\gamma \in R^{in}$.
\end{proof}

\begin{The}\label{3.8}
	Suppose that $M$ is an $\frak L$-module having shadow. Then 
	\begin{itemize}
		\item [(i)] $(R^{ln}+R^{ln})\cap R_{re} \subseteq R^{ln}$.
		\item [(ii)] $(R^{ln}+2R^{ln})\cap R_{re} \subseteq R^{ln}$.
	\end{itemize}
\end{The}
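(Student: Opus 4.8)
The plan is to argue both parts by contradiction, using the shadow property to pass between the action-theoretic sets $R^{in},R^{ln}$ and the support-theoretic sets ${\frak B}_M,{\frak C}_M$. In both parts write $\gamma$ for the relevant sum ($\gamma=\alpha+\beta$ in (i), $\gamma=\alpha+2\beta$ in (ii)), assume $\gamma\in R_{re}$, and suppose $\gamma\notin R^{ln}$; since $M$ has shadow, $R_{re}=R^{in}\cup R^{ln}$, so $\gamma\in R^{in}={\frak C}_M\cap R_{re}$, i.e. $\gamma\in{\frak C}_M$. The first thing I would record is that $R^{in}\cap R^{ln}=\emptyset$ for a nonzero module (an injective operator cannot be locally nilpotent), so $\alpha,\beta\in R^{ln}$ forces $\alpha,\beta\notin{\frak C}_M$. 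The two engines are then additive closure of ${\frak C}_M$ (Lemma \ref{3.2}(ii)) and the halving equivalence $t\alpha\in{\frak C}_M\Leftrightarrow\alpha\in{\frak C}_M$ for real roots (Lemma \ref{3.7}(i)); every contradiction will take the shape ``a positive multiple of $\alpha$ or $\beta$ lies in ${\frak C}_M$''.

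For (i) I would first dispose of the mixed cases. As $-\alpha\in R_{re}$, shadow gives $-\alpha\in R^{in}$ or $-\alpha\in R^{ln}$. If $-\alpha\in R^{in}={\frak C}_M\cap R_{re}$, then $\beta=\gamma+(-\alpha)\in{\frak C}_M$ by Lemma \ref{3.2}(ii), whence $\beta\in R^{in}$, contradicting $\beta\in R^{ln}$; the symmetric argument rules out $-\beta\in R^{in}$. This reduces matters to the key case, call it \textbf{Case B}, in which $\pm\alpha,\pm\beta\in R^{ln}$.

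In Case B I would invoke reflection invariance of $R^{in}$: since $\pm\alpha\in R^{ln}$ and $\pm\beta\in R^{ln}$, Lemma \ref{3.7}(ii) shows $R^{in}$ is stable under both $r_\alpha$ and $r_\beta$, so the whole orbit $\langle r_\alpha,r_\beta\rangle\gamma$ lies in $R^{in}\subseteq{\frak C}_M$. As ${\frak C}_M$ is additively closed, any nonnegative-integer combination of these reflected copies of $\gamma$ again lies in ${\frak C}_M$, and the goal is to realise such a combination as a positive multiple of $\alpha$ (or $\beta$): for example $\gamma+r_\beta(\gamma)=2\alpha-\langle\alpha,\beta\rangle\beta$, which equals $2\alpha$ when $(\alpha,\beta)=0$, while when $\langle\alpha,\beta\rangle=-1$ one has $r_\beta(\gamma)=\alpha$ outright. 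Producing $c\alpha\in{\frak C}_M$ for some $c\in\mathbb Z^{>0}$ and then applying Lemma \ref{3.7}(i) gives $\alpha\in{\frak C}_M$, i.e. $\alpha\in R^{in}$, contradicting $\alpha\in R^{ln}$. The main obstacle is precisely this last step: checking that a positive multiple of $\alpha$ or $\beta$ always lies in the nonnegative-integral span of the orbit. This is a purely rank-two question about the real roots $\alpha,\beta,\gamma$, which I would settle by inspecting the finitely many allowed configurations of $\{\alpha,\beta,\alpha+\beta\}\subseteq R_{re}$ in Tables \ref{Table 2}--\ref{Table 4} (equivalently the values of $\langle\alpha,\beta\rangle,\langle\beta,\alpha\rangle$); since $r_\alpha,r_\beta$ fix $\delta$, this combinatorics really takes place in the finite system $\dot R$. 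Alternatively, in Case B all four vectors $x_{\pm\alpha},x_{\pm\beta}$ act locally nilpotently, so $M$ is integrable, hence locally finite, over the finite-dimensional rank-two subsuperalgebra they generate, which forces $x_\gamma$ to act locally nilpotently; this is the route of \cite{Yousofzadeh:Finite weight 2020}.

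Part (ii) follows the same pattern with $\gamma=\alpha+2\beta$. If $\alpha+\beta\in R_{re}$, I would simply apply (i) twice: $\alpha+\beta\in R^{ln}$, and then $\gamma=(\alpha+\beta)+\beta\in R^{ln}$. Otherwise, assuming $\gamma\in{\frak C}_M$, the mixed cases are handled exactly as before, now using halving to absorb the factor $2$: if $-\alpha\in R^{in}$ then $2\beta=\gamma+(-\alpha)\in{\frak C}_M$, so $\beta\in{\frak C}_M$ by Lemma \ref{3.7}(i), a contradiction; if $-\beta\in R^{in}$ then $-2\beta\in{\frak C}_M$ by Lemma \ref{3.2}(ii) and $\alpha=\gamma+(-2\beta)\in{\frak C}_M$, again a contradiction. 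The surviving case $\pm\alpha,\pm\beta\in R^{ln}$ is then resolved by the same reflection-plus-closure argument applied to $\gamma=\alpha+2\beta$, with the same finite rank-two verification as the only remaining technical point.
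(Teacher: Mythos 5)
Your overall architecture is sound, and it is essentially the argument this paper defers to (its own proof of Theorem~\ref{3.8} is only the instruction to repeat \cite[Theorem 4.7]{Yousofzadeh:Finite weight 2020}, whose ingredients are exactly the lemmas you invoke): contradiction via the shadow identities, disjointness of $R^{in}$ and $R^{ln}$, additive closure of ${\frak C}_M$ (Lemma~\ref{3.2}(ii)) plus halving (Lemma~\ref{3.7}(i)) for the mixed cases, and $r_\alpha,r_\beta$-invariance of $R^{in}$ (Lemma~\ref{3.7}(ii)) for the case $\pm\alpha,\pm\beta\in R^{ln}$. The mixed cases and the reduction of (ii) to (i) are correct. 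The genuine gap sits exactly where you hedge: the untwisted systems are not all reduced. In $B(m,n)^{(1)}$ and $G(3)^{(1)}$ both $\dot\delta_j,2\dot\delta_j$ (resp.\ $\dot\nu,2\dot\nu$) are real roots, so Case B must cover $\alpha=\dot\alpha+m\delta$, $\beta=\dot\alpha+n\delta$ with $m\neq n$ and $\gamma=\alpha+\beta=2\dot\alpha+(m+n)\delta\in R_{re}$. There $\langle\alpha,\beta\rangle=\langle\beta,\alpha\rangle=2$, and neither of your identities produces a positive multiple of $\alpha$ or $\beta$: $r_\beta(\gamma)=\gamma-4\beta$ is not $\pm\alpha,\pm\beta$, and $\gamma+r_\beta(\gamma)=2\alpha-2\beta=2(m-n)\delta$ is imaginary. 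In every other admissible configuration a single reflection or your two-term sum does work, so this non-reduced case is the whole content of the ``finite verification'' you postponed.

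Moreover, both of your proposed ways of disposing of it fail precisely there. First, the combinatorics does not descend to $\dot R$: modulo $\delta$ one has $\dot\gamma=2\dot\alpha$ and halving would finish instantly, but the actual root $\gamma$ is not a rational multiple of $\alpha$ when $m\neq n$, so producing an element of ${\frak C}_M$ that is literally $c\alpha$ (not $c\alpha+e\delta$) requires exact identities in ${\frak H}^*$ with the $\delta$-coefficients tracked. Second, the subsuperalgebra generated by $\frak L^{\pm\alpha},\frak L^{\pm\beta}$ is not finite dimensional here: $[\frak L^{\alpha},\frak L^{-\beta}]\subseteq\frak L^{(m-n)\delta}$, and repeated brackets yield root spaces $\frak L^{\pm\dot\alpha+k\delta}$ for infinitely many $k$; over such an affine-type subalgebra, local nilpotence of generators does not give local finiteness, so the integrability shortcut is unavailable. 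The case can nevertheless be closed inside your own framework by a longer combination: $r_\beta(\gamma)=-2\dot\alpha+(m-3n)\delta$ lies in $R^{in}\subseteq{\frak C}_M$ by Lemma~\ref{3.7}(ii), hence $3\gamma+r_\beta(\gamma)=4\dot\alpha+4m\delta=4\alpha\in{\frak C}_M$ by Lemma~\ref{3.2}(ii), and Lemma~\ref{3.7}(i) with the shadow identity ${\frak C}_M\cap R_{re}=R^{in}$ forces $\alpha\in R^{in}$, contradicting $\alpha\in R^{ln}$. With this case added (and, in part (ii), noting that by the root-string property the only configuration not already covered by applying (i) twice is $\dot\beta=-\dot\alpha$, which is settled by $r_\beta(\gamma)=\gamma-2\beta=\alpha$), your proof closes.
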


\begin{proof}
	Repeat the proof of \cite[Theorem 4.7]{Yousofzadeh:Finite weight 2020} and use the above results (presented for the untwisted cases) in the body of that proof.
%
\end{proof}

Now we state the main theorem of this note.

\begin{The}\label{3.9}
	Suppose that $M$ is an $\frak L$-module having shadow. Then, 
	for each $\beta \in R_{re}$, one of the following will happen.
	\begin{itemize}
		\item [(i)] $\beta +\Bbb Z\delta \subseteq R^{ln}$.
		\item [(ii)] $\beta +\Bbb Z\delta \subseteq R^{in}$.
		\item [(iii)] there exist $m\in \Bbb Z$ and $t\in \{-1,0,1\}$ such that for $\gamma:=\beta +m\delta$, 
		\begin{align*}
			&\gamma +\Bbb Z^{\geq 1}\delta\subseteq R^{in},~~\gamma +\Bbb Z^{\leq 0}\delta\subseteq R^{ln}\\
			&-\gamma +\Bbb Z^{\geq t}\delta\subseteq R^{in},~~-\gamma +\Bbb Z^{\leq t-1}\delta\subseteq R^{ln}
		\end{align*}
		\item [(iv)] there exist $m\in \Bbb Z$ and $t\in \{-1,0,1\}$ such that for $\eta:=\beta +m\delta$, 
		\begin{align*}
			&\eta +\Bbb Z^{\leq -1}\delta\subseteq R^{in},~~	\eta +\Bbb Z^{\geq 0}\delta\subseteq R^{ln}\\
			&-\eta +\Bbb Z^{\leq -t}\delta\subseteq R^{in},~~-\eta +\Bbb Z^{\geq 1-t}\delta\subseteq R^{ln}.
		\end{align*}
	\end{itemize}
\end{The}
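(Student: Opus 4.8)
The plan is to analyze, for a fixed $\beta\in R_{re}$, the single $\delta$-string $\beta+\Bbb Z\delta$, which lies entirely in $R_{re}$, together with its negative $-\beta+\Bbb Z\delta$; I write $I_\beta:=\{k\in\Bbb Z\mid \beta+k\delta\in R^{in}\}$ and $L_\beta:=\Bbb Z\setminus I_\beta$, and similarly $I_{-\beta}$. The two strings are coupled by reflections: since $\delta$ is orthogonal to every root, $\langle \beta+l\delta,\beta+k\delta\rangle=2$ whenever $\beta+k\delta\in R_{re}$, whence $r_{\beta+k\delta}(\beta+l\delta)=-\beta+(l-2k)\delta$; in particular $r_\beta$ interchanges the $\beta$-string and the $-\beta$-string index-by-index. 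The argument then splits into two parts: first, that along each string the pattern of $R^{in}$ versus $R^{ln}$ switches at most once (monotonicity), which fixes the gross shape of (i)--(iv); and second, that the reflection identity together with Lemma \ref{3.7}(ii) controls the relative position of the two switches, giving the bound $t\in\{-1,0,1\}$.

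For the monotonicity step I would use that $M$ has shadow, so $R^{in}=\mathfrak C_M\cap R_{re}$ and $R^{ln}=\mathfrak B_M\cap R_{re}$. The content is that the line $\beta+\Bbb R\delta$ meets the shift-monoid $\mathfrak C_M$ (and, dually, the bounded-direction set $\mathfrak B_M$) in a ray rather than a bounded segment, so that $I_\beta$ is one of $\emptyset$, $\Bbb Z$, an up-ray $\{k\ge N\}$, or a down-ray $\{k\le N\}$, with $L_\beta$ the complementary ray. Morally this says the recession directions of $\mathrm{supp}(M)$ form a convex set; to make it rigorous I would feed the closure and scaling properties of Lemma \ref{3.2} into the support-extraction machinery of Proposition \ref{3.3} and Proposition \ref{3.4} (and, where a genuine highest/lowest-weight vector is needed, Proposition \ref{3.5}), exactly as in the bounded-in-$\delta$ analysis carried out there. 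This is the main obstacle: unlike the coupling step it does not follow from root combinatorics alone (e.g. Theorem \ref{3.8}), because $\beta+k\delta$ cannot be written as a sum of two real roots lying in the same string, so one is forced to argue on $\mathrm{supp}(M)$ itself. Once it is available, the empty and full cases are (i) and (ii), while a genuine switch produces a hybrid string, oriented either ln-below/in-above or in-below/ln-above.

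For the coupling step, suppose we are in the hybrid case with $I_\gamma=\{l\ge 1\}$ for $\gamma:=\beta+m\delta$ (case (iii); case (iv) is the mirror image under $\delta\mapsto-\delta$). Applying monotonicity to $-\gamma$ shows $I_{-\gamma}$ is again a ray; it can be neither trivial nor of the opposite orientation, since in each such case some reflecting root $\alpha=\gamma+k\delta$ would have $\pm\alpha$ of a single type, and then Lemma \ref{3.7}(ii) through $r_{\gamma+k\delta}$ would equate $I_\gamma$ with a shift of $I_{-\gamma}$ by $2k$, which an up-ray and a down-ray (or a proper ray and all/none) cannot satisfy. Hence $I_{-\gamma}=\{l\ge t\}$ for some $t$, and it remains to bound $t$. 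If $t\ge 2$ then $\pm\gamma\in R^{ln}$, so Lemma \ref{3.7}(ii) with $\alpha=\gamma$ gives $\gamma+l\delta\in R^{in}\iff -\gamma+l\delta\in R^{in}$, i.e. $I_{-\gamma}=I_\gamma=\{l\ge1\}$ and $t=1$, a contradiction. If $t\le -2$ then $\alpha:=\gamma+\delta$ satisfies $\alpha\in R^{in}$ and $-\alpha=-\gamma-\delta\in R^{in}$ (its index $-1\ge t$), so Lemma \ref{3.7}(ii) through $r_{\gamma+\delta}$ gives $\gamma+l\delta\in R^{in}\iff -\gamma+(l-2)\delta\in R^{in}$, forcing $t=-1$, again a contradiction. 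Thus $t\in\{-1,0,1\}$, the values $t=\pm1$ arising precisely when some reflecting root has $\pm$ of a single type and $t=0$ being the remaining admissible value; translating the index sets back through $\gamma=\beta+m\delta$ yields the displayed inclusions of (iii), and the same argument in the opposite orientation yields (iv).
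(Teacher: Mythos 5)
Your coupling step is fine as far as it goes: once both $I_\gamma$ and $I_{-\gamma}$ are known to be up-rays, feeding the reflections $r_{\gamma+k\delta}$ through Lemma \ref{3.7}(ii) does pin the offset to $t\in\{-1,0,1\}$, and this parallels how the paper itself exploits Lemma \ref{3.7}(ii). The genuine gap is your first step, the ``monotonicity'' claim that $I_\beta$ is always $\emptyset$, $\Bbb Z$, or a ray. You identify it as the main obstacle but never prove it; you only propose to feed Lemma \ref{3.2} into Propositions \ref{3.3}, \ref{3.4} and \ref{3.5}. Those propositions cannot be invoked here, and the obstruction is not cosmetic: their hypotheses require a set $\mathcal S$ with $\mathcal S\subseteq \frak B_W$ and $-\mathcal S\subseteq \frak C_W$ (respectively a triangular decomposition with $T^+_{re}\subseteq \frak B_W$ and $T^-_{re}\subseteq \frak C_W$). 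But in precisely the configurations monotonicity must exclude, every root $\alpha$ of the offending string has $-\alpha$ of the same type as $\alpha$, and since under the shadow hypothesis $R^{ln}=\frak B_M\cap R_{re}$ and $R^{in}=\frak C_M\cap R_{re}$ are disjoint, no admissible $\mathcal S$ meeting that string exists. The appeal is therefore circular: those propositions only become usable after the in/ln pattern is known to be coherently oriented, which is what is to be proved.

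Moreover, no purely combinatorial argument can close this gap, so the vagueness is not repairable by more care with the same tools. Consider, for a real root $\beta$ with $2\beta\notin R$, the period-two pattern $\pm\beta+k\delta\in R^{in}$ for $k$ even and $\pm\beta+k\delta\in R^{ln}$ for $k$ odd. Restricted to this pair of strings, the pattern is invariant under every reflection permitted by Lemma \ref{3.7}(ii) (each admissible $r_{\beta+k\delta}$ sends $\beta+l\delta$ to $-\beta+(l-2k)\delta$, preserving parity), and it is stable under Lemma \ref{3.2}(ii) combined with shadow and under Theorem \ref{3.8}: every nonnegative integer combination of its ``in'' roots that is again a real root of these strings is an ``in'' root, and every sum of the form ${\rm ln}+{\rm ln}$ or ${\rm ln}+2\,{\rm ln}$ landing in $R_{re}$ is an ``ln'' root. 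What rules this pattern out is exactly the support argument the paper runs at the start of its proof of Theorem \ref{3.9}: if $\gamma\in R^{ln}$, $\gamma+\delta\in R^{in}$, $-\gamma\in R^{ln}$ and $-\gamma-\delta\in R^{in}$ (which is what the alternating pattern produces at $\gamma=\beta+\delta$), then Lemma \ref{3.7}(ii) yields $-\gamma+\delta,\ \gamma-\delta\in R^{in}$, and for any $\eta\in{\rm supp}(M)$ one picks $k\geq 1$ with $\eta+2k\gamma\notin{\rm supp}(M)$ (possible since $\gamma\in\frak B_M$) and then shifts by the injective roots $\pm\gamma+\delta$, $\pm(\gamma+\delta)$ — all in $\frak C_M$, so membership in ${\rm supp}(M)$ is preserved in both directions — to conclude $\eta\notin{\rm supp}(M)$, i.e.\ $M=0$, a contradiction. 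Your proposal contains no substitute for this argument. Incidentally, your diagnosis of the difficulty is also off: it is not that $\beta+k\delta$ ``cannot be written as a sum of two real roots lying in the same string,'' since the paper freely uses multi-term decompositions mixing the two strings, such as $\gamma=(\gamma-n\delta)+n(-\gamma)+n(\gamma+\delta)$, via Lemma \ref{3.2}(ii) and shadow; indeed the paper never proves monotonicity as a standalone step at all, but fixes one switch, removes the bad configuration by the support argument above, and then derives the entire two-string pattern case by case from such decompositions together with Lemma \ref{3.7}(ii) and Theorem \ref{3.8}.
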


\begin{proof}
	We can write $\beta=\dot{\beta}+n\delta$ in which $\dot{\beta}\in \dot{R}_{re}$ and $n\in \Bbb Z$. Clearly, 
	$$(\beta+\Bbb Z\delta)\cap R=\beta+\Bbb Z\delta\subseteq R_{re}=R^{ln}\cup R^{in}.$$ 
	Thus, if (i) and (ii) do not occur, then
	$$\gamma:=\beta +k\delta \in R^{ln},~~\gamma +\delta=\beta+(k+1)\delta\in R^{in}\quad\quad (*)$$
	or
	$$\gamma:=\beta +k\delta \in R^{in},~~\gamma +\delta=\beta+(k+1)\delta\in R^{ln} \quad\quad (**)$$
	for some $k\in \Bbb Z$.
	We claim that if $(*)$  holds, then (iii)  happens.
Before starting the main body of the proof of the claim, we show that the situation $$-\gamma \in R^{ln},{\rm and}~~-\gamma -\delta \in R^{in}$$
 cannot happen. Suppose on the contrary that
	$-\gamma \in R^{ln}$ and $-\gamma -\delta \in R^{in}$. Then, according to $(*)$, we have $\pm\gamma\in R^{ln}$ or $\pm(\gamma+\delta)\in R^{in}$. 
	Thus by Lemma \ref{3.7}(ii)
	\begin{align*}
		&r_{\gamma}(\gamma+\delta)=\gamma+\delta-\frac{2(\gamma, \gamma+\delta)}{(\gamma,\gamma)}\gamma=-\gamma+\delta\in R^{in},\\
		&r_{\gamma}(-\gamma-\delta)=-\gamma-\delta-\frac{2(\gamma, -\gamma-\delta)}{(\gamma,\gamma)}\gamma=\gamma-\delta\in R^{in}.
	\end{align*}
Let $\mu \in {\rm supp}(M)$. As $-\gamma+\d\in R^{in}$, it follows that $\mu+(-\gamma+\d)\in {\rm supp}(M)$. 
On the other hand, since 
	$\gamma -\delta\in R^{in}\subseteq \frak C_M$, we have $\mu\in {\rm supp}(M)$. So, by similar arguments,   we can easily show that
	$$\mu+(-\gamma+\delta)\in {\rm supp}(M)\Leftrightarrow
	\mu\in {\rm supp}(M)\Leftrightarrow
	\mu+(\gamma+\delta)\in {\rm supp}(M).$$
	Now, let  $\eta \in {\rm supp}(M)$. Since $\gamma \in R^{ln}\subseteq \frak B_M$, we can choose $k\in \Bbb Z^{\geq 1}$ such that 
	\begin{align*}
		&\eta+2k\gamma \not \in {\rm supp}(M) \Longrightarrow\\
		&\eta+2k\gamma+2k(-\gamma+\delta)\not \in {\rm supp}(M) \Longrightarrow\\
		&\eta+2k\delta\not \in {\rm supp}(M) \Longrightarrow\\
		&\eta+k(\gamma+\delta)+k(-\gamma+\delta)\not \in {\rm supp}(M) \Longrightarrow\\
		&\eta \not \in {\rm supp}(M),
	\end{align*}
	which is a contradiction.  We now proceed with the remaining cases for $-\gamma$  and $-\gamma -\delta$. 
	
	{\bf Case 1.} $-\gamma \in R^{ln}$ and $-\gamma -\delta \in R^{ln}$.\\
Assume that there is $p\in\Bbb Z$ such that $\gamma+p\d\in R^{in}$.  AS 
$\pm\gamma \in R^{ln}$, it follows from Lemma \ref{3.7}(ii) that 
$$r_\gamma(\gamma+p\d)=\gamma+p\d-\dfrac{2(\gamma,\gamma)}{(\gamma,\gamma)}\gamma
=-\gamma+p\d\in R^{in}.$$
 Again by Lemma \ref{3.7}(ii),
 $$r_\gamma(-\gamma+p\d)=-\gamma+p\d-\dfrac{2(-\gamma,\gamma)}{(\gamma,\gamma)}\gamma
=\gamma+p\d\in R^{in}.$$
Thus, we can prove that
	\begin{align}\label{3.111}
		&\gamma+p\delta \in R^{in} \Leftrightarrow -\gamma+p\delta \in R^{in},
	\end{align}
	for all $p\in \Bbb Z$. Particularly, 
	$$\gamma+\delta \in R^{in} \Leftrightarrow -\gamma+\delta \in R^{in}.$$
 On the other hand, 
	if $\pm \gamma +2\delta\in R^{ln}$, then
	by Theorem \ref{3.8}
	$$\gamma+3\delta=\underbrace{-\gamma-\delta}_{\in R^{ln}}+\underbrace{2(\gamma+2\delta)}_{\in R^{ln}}\in R^{ln},$$
and so by Lemma \ref{3.2}(ii)
	$$-\gamma+3\delta=\underbrace{-\gamma+\delta}_{\in R^{in}}+\underbrace{\gamma+\delta}_{\in R^{in}}+\underbrace{(-\gamma+\delta)}_{\in R^{in}}\in \frak C_M\cap R_{re}=R^{in}.$$
	This contradicts (\ref{3.111}). Hence, $\pm \gamma +2\delta\in R^{in}$.
		Now, assume that $n\in \Bbb Z^{\geq 1}$. Then  by Lemma \ref{3.2}(ii)
		$$\pm\gamma +2n\delta=\underbrace{\pm\gamma+2\delta}_{\in R^{in}}+\underbrace{(n-1)(\gamma+\delta)}_{\in R^{in}}+\underbrace{(n-1)(-\gamma+\delta)}_{\in R^{in}}\in \frak C_M\cap R_{re}^\times=R^{in}.$$
			Furthermore,	by Lemma \ref{3.2}(ii)
	$$\pm\gamma+(1+2n)\delta=\underbrace{\pm\gamma+\delta}_{\in R^{in}}
	+\underbrace{n(\gamma+\delta)}_{\in R^{in}}+\underbrace{n(-\gamma+\delta)}_{\in R^{in}}\in \frak C_M\cap R_{re}=R^{in}.$$
	Now, assume that $n\in \Bbb Z^{\leq 0}$. If $\pm \gamma +2n\delta\in R^{in}$, then by Lemma \ref{3.2}(ii)
	$$\pm\gamma=\underbrace{\pm\gamma+2n\delta}_{\in R^{in}}+\underbrace{(-n(\gamma +\delta))}_{\in R^{in}}+\underbrace{(-n(-\gamma+\delta))}_{\in R^{in}}\in \frak C_M\cap R_{re}=R^{in}.$$
	This contradicts the assumption $\pm\gamma \in R^{ln}$. Hence, 
	$\pm \gamma +2n\delta\in R^{ln}$. Therefore, we have showed that
	\begin{align}
		&\underbrace{\cdots,~ \gamma-2\delta,~ \gamma -\delta,~ \gamma}_{\in R^{ln}}~,~ \underbrace{\gamma +\delta, ~\gamma +2\delta,~ \cdots}_{\in R^{in}},\\
		&\underbrace{\cdots,~ -\gamma-2\delta,~ -\gamma -\delta,~ -\gamma}_{\in R^{ln}}~,~ \underbrace{-\gamma +\delta, ~-\gamma +2\delta,~ \cdots}_{\in R^{in}} \quad ({\rm accutully}~ t=1).
	\end{align}
	This means that (iii) holds.
	
	{\bf Case 2.} $-\gamma \in R^{in}$ and $-\gamma -\delta \in R^{in}$.\\	
	Assume that $\gamma-n\delta\in R^{in}$ for some $n\in\Bbb Z^{>0}$. Then by Lemma \ref{3.2}(ii) and $(*)$
	$$\gamma=\underbrace{\gamma-n\delta}_{\in R^{in}}+\underbrace{n(-\gamma)}_{\in R^{in}}+\underbrace{n(\gamma+\delta)}_{\in R^{in}}\in \frak C_M\cap R_{re}=R^{in}.$$
	This contradicts the assumption $\gamma \in R^{ln}$, see ($*$). Thus 
	$\gamma-n\delta\in R^{ln}$ for all $n\in\Bbb Z^{\geq 0}$.
Based on $(*)$, we have $\gamma +\delta\in R^{in}$. 
So  by Lemma \ref{3.7},
$$r_{\gamma+\delta}(\gamma)=\gamma-\frac{2(\gamma,\gamma+\delta)}{(\gamma+\delta,\gamma+\delta)}(\gamma+\delta)=\gamma-2(\gamma+\delta)=-\gamma -2\delta$$
belongs to $R^{ln}$, because $\gamma \in R^{ln}$. 
If $-\gamma-n\d\in R^{in}$ for $n\geq 3$, then consider the following equation
$$-\gamma-2\delta=-\gamma-n\delta+(n-2)(-\gamma)+(n-2)(\gamma+\delta).$$
By Lemma \ref{3.2}(ii) and $(*)$, the right hand side of the equation belongs to $R^{in}$, while the left hand side of the equation belongs to $R^{ln}$, a contradiction. Thus, $-\gamma-n\delta\in R^{ln}$ for all $n\in\Bbb Z^{\geq 3}$.\\
	Now, assume that $n\in \Bbb Z^{\geq 0}$. Then by Lemma \ref{3.2}(ii) and $(*)$
	\begin{align*}
		&-\gamma+n\delta=\underbrace{(n+1)(-\gamma)}_{\in R^{in}}+\underbrace{n(\gamma+\delta)}_{\in R^{in}}\in R^{in},\\
		&-\gamma+(n+2)\delta=\underbrace{(n+1)(-\gamma)}_{\in R^{in}}+\underbrace{(n+2)(\gamma+\delta)}_{\in R^{in}}\in R^{in}.
	\end{align*}
 Note that by Lemma \ref{3.2}(ii), we have 
 $$\gamma+n\delta=\underbrace{(n-1)(-\gamma)}_{\in R^{in}}+\underbrace{n(\gamma+\delta)}_{\in R^{in}}\in R^{in}$$
 in which $n\geq 1$.
	Therefore, we show that
	\begin{align}
		&\underbrace{\cdots,~ \gamma-2\delta,~ \gamma -\delta,~ \gamma}_{\in R^{ln}}~,~ \underbrace{\gamma +\delta, ~\gamma +2\delta,~ \cdots}_{\in R^{in}},\\
		&\underbrace{\cdots,~ -\gamma-2\delta}_{\in R^{ln}}~,~ \underbrace{ -\gamma -\delta,~ -\gamma,~-\gamma +\delta, ~-\gamma +2\delta,~ \cdots}_{\in R^{in}} \quad ({\rm accutully}~~t=-1).
	\end{align}
	This means that (iii) holds.
	
	{\bf Case 3.} $-\gamma \in R^{in}$ and $-\gamma -\delta \in R^{ln}$.\\	
	Let $n\in \Bbb Z^{\geq 0}$. Then for $n\geq 0$, we have
	\begin{align*}
		&-\gamma+n\delta=\underbrace{(n+1)(-\gamma)}_{\in R^{in}}+\underbrace{n(\gamma+\delta)}_{\in R^{in}}\in R^{in},\\
		&\gamma+(n+1)\delta=\underbrace{n(-\gamma)}_{\in R^{in}}+\underbrace{(n+1)(\gamma+\delta)}_{\in R^{in}}\in R^{in}.
	\end{align*}
	Assume that $\gamma -k\delta\in R^{in}$ for some $k\in \Bbb Z ^{\geq 1}$. Since $-\gamma+k\d=-(\gamma-k\delta)\in R^{in}$ (above), we have 
	$$-\gamma+2k\delta=\gamma-2\gamma+2k\delta=\gamma-\frac{2(\gamma-k\delta,\gamma)}{(\gamma , \gamma)}(\gamma-k\delta)=r_{\gamma-k\delta}(\gamma)\in R^{ln}$$
	by Lemma \ref{3.7}.
	However, according to the above, the left side belongs to $R^{in}$, which is a contradiction. Thus, $\gamma -n\delta\in R^{ln}$ for all $n\in \Bbb Z ^{\geq 0}$. On the other hand, by using the above and Theorem \ref{3.8}
	$$-\gamma-(n+1)\delta=(\gamma-(n-1)\delta)+2(-\gamma-\delta)\in {R^{ln}}+2R^{ln}\subseteq R^{ln},$$
	for all $n\in \Bbb Z ^{\geq 1}$. Therefore, we show that
	\begin{align}
		&\underbrace{\cdots,~ \gamma-2\delta,~ \gamma -\delta,~ \gamma}_{\in R^{ln}}~,~ \underbrace{\gamma +\delta, ~\gamma +2\delta,~ \cdots}_{\in R^{in}},\\
		&\underbrace{\cdots,~ -\gamma-2\delta,~-\gamma -\delta}_{\in R^{ln}}~,~ \underbrace{-\gamma,~-\gamma +\delta, ~-\gamma +2\delta,~ \cdots}_{\in R^{in}} \quad ({\rm accutully}~~t=0).
	\end{align}
	This means that (iii) holds.\\	
	Regarding (iv), in $(**)$, we have
	$$\eta:=\gamma+\delta\in R^{ln},~~\eta+(-\delta)=\gamma\in R^{in}.$$
	This implies that $(**)$ turns to $(*)$, and we get (iv). The proof is complete.
\end{proof}

The above theorem allows us to choose a name for a root according to its behavior correspondence action. The real  
\begin{itemize}
	\item [-] root  $\a$ is {\it fully locally nilpotent}, if 
	$\alpha+\Bbb Z\delta \subseteq R^{ln}$,
	\item [-] root $\a$ is {\it full-injective}, if 
	$\alpha+\Bbb Z\delta \subseteq R^{in}$,
		\item [-] roots $\pm \a$ are {\it up-nilpotent hybrid}, if there is positive integer $m$ such that 
		$$\pm \a +\Bbb Z^{\geq m}\d\subseteq R^{ln}~~{\rm and} ~~\pm \a +\Bbb Z^{\leq -m}\d\subseteq R^{in},$$ 
			\item [-] roots $\pm \a$ are {\it down-nilpotent hybrid}, if there is positive integer $m$ such that 
			$$\pm \a +\Bbb Z^{\geq m}\d\subseteq R^{in}~~{\rm and} ~~\pm \a +\Bbb Z^{\leq -m}\d\subseteq R^{ln}.$$ 
\end{itemize}
A real root $\a$ is said to be {\it hybrid} if $\a$ is either down-nilpotent hybrid or up-nilpotent hybrid, otherwise it is called {\it tight}.
A subset $T$ of $R$ is called a {\it full-locally nilpotent} (resp. {\it full-injective}, {\it down-nilpotent hybrid},
{\it up-nilpotent hybrid}, {\it hybrid, tight}) {\it set}, if all elements of $T_{re}:=  T \cap R_{re}$ are full-locally nilpotent (resp.
full-injective, down-nilpotent hybrid, up-nilpotent hybrid, hybrid, tight).

\begin{remark}\label{3.10}
  Suppose that $\frak g$ is a subalgebra of $\frak L$ containing $\frak H$ with corresponding root system $S$ and $M$
is an $\frak H$-weight $\frak g$-module having shadow. If 
 for each $\a\in S^\times$, $S$ satisfies the condition $\a+\Bbb Z \d\subseteq S$, then by a straightforward check, Theorem \ref{3.9} holds, that is, each real root $\b\in S$ satisfies one of the following:
 \begin{itemize}
		\item [(i)] $\beta +\Bbb Z\delta \subseteq S^{ln}$.
		\item [(ii)] $\beta +\Bbb Z\delta \subseteq S^{in}$.
		\item [(iii)] there exist $m\in \Bbb Z$ and $t\in \{-1,0,1\}$ such that for $\gamma:=\beta +m\delta$, 
		\begin{align*}
			&\gamma +\Bbb Z^{\geq 1}\delta\subseteq S^{in},~~\gamma +\Bbb Z^{\leq 0}\delta \subseteq S^{ln}\\
			&-\gamma +\Bbb Z^{\geq t}\delta \subseteq S^{in},~~-\gamma +\Bbb Z^{\leq t-1}\delta \subseteq S^{ln}
		\end{align*}
		\item [(iv)] there exist $m\in \Bbb Z$ and $t\in \{-1,0,1\}$ such that for $\eta:=\beta +m\delta$, 
		\begin{align*}
			&\eta +\Bbb Z^{\leq -1}\delta \subseteq S^{in},~~	\eta +\Bbb Z^{\geq 0}\delta \subseteq S^{ln}\\
			&-\eta +\Bbb Z^{\leq -t}\delta \subseteq S^{in},~~-\eta +\Bbb Z^{\geq 1-t}\delta\subseteq S^{ln}.
		\end{align*}
	\end{itemize}
 Hence, we are again able to define a full-locally nilpotent (resp. full-injective, down-nilpotent hybrid,
up-nilpotent hybrid, hybrid) subset of $S$ in exactly the same way as above.
\end{remark}

We recall that since the form on $\frak L$ restricted to $\frak H$ is nondegenerate, symmetric, and bilinear, we can deduce that for each $\a\in \frak H^*$, there is a unique 
$t_\a\in \frak H$ with $\a(h)=(t_\a,h)$ for all $h\in \frak H$.

To prove the next result, we need to recall some materials from the literature:
\begin{itemize}
    \item [(i)] An {\it extended affine Lie algebra} is a triple $(\frak K, (-, -), \frak h)$ consisting of two Lie algebras $0\neq \frak h\subseteq \frak K$ and a symmetric bilinear form 
    $(-,-):\frak K\times \frak K\longrightarrow \Bbb C$ such that
\begin{itemize}
    \item [(a)] the form $(-, -)$ is nondegenerate and invariant,
   \item [(b)] $\frak h$ is finite dimensional, abelian, and self-centralizing. Moreover,
$${\rm ad}_h:\frak K\longrightarrow \frak K, ~~{\rm ad}_h(x)=[h,x]$$
    is diagonalizable for all $h\in \frak h$ (this leads us to the root system $T$, and we are able to define $T_{re}:=\{\a\in T\mid (\a,\a)\neq 0\}$ and $T_{\circ}:=\{\a\in T\mid (\a,\a)=0\}\cup \{0\}$),
\item [(c)]    ${\rm ad}_x:\frak K\longrightarrow \frak K$ is locally nilpotent for all $x\in \frak K^\a$ and $\a\in T_{re}$,
\item [(d)] $T_{re}$ is irreducible, 
\item [(e)] the subgroup $G:={\rm span}_{\Bbb Z}(T_\circ)$ in the group $(\frak h^*,+)$ is a free abelian group of finite rank ($G\cong \Bbb Z^n$ as a group for some $n\in \Bbb N\cup\{0\}$),\\
\item [(f)]
$T_\circ$ has no isolated roots, that is, given $\b\in T_\circ$ there exists $\a\in T_{re}$ such that $\a+\b\in T$.
\end{itemize}
 \item [(ii)] Let $\frak K$ be an extended affine Lie algebra with root system $T$. The Lie algebra $\frak K$ is called {\it tame} if $\frak K_c^\perp\subseteq \frak K_c$ in which 
 $$\frak K_c:=\langle \frak K^\a\mid \a\in T_{re}\rangle_{\rm subalgebra}~~{\rm and}~~\frak K_c^\perp:=\{x\in \frak K\mid (x,\frak K_c)=\{0\}\}.$$
\item [(iii)] Let $\frak K$ be an extended affine Lie algebra. The rank of the free abelian group $G$ in axiom (e) is called the {\it nullity} of $\frak K$.
\end{itemize}

\begin{The}[\cite{ABGP}, Theorem 2.23]\label{3.11}
 A Lie algebra $\frak K$ (over $\Bbb C$) is isomorphic to an affine (Kac-Moody) Lie algebra
if and only if $\frak K$ is isomorphic to a tame extended affine Lie algebra of nullity one.    \end{The}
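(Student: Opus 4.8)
The plan is to prove the two implications separately. The forward direction (an affine Kac--Moody Lie algebra is a tame extended affine Lie algebra of nullity one) is a direct verification against axioms (a)--(f) together with the definitions of tameness and nullity, while the converse (reconstructing the affine algebra from the axioms) is a structural recognition problem and is where the real work lies. Throughout I fix $\frak K$, its form $(-,-)$, and $\frak h$ as in the statement, and write $T$, $T_{re}$, $T_\circ$, $\frak K_c$ for the associated data.

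For the forward implication I realize $\frak K=\dot{\frak g}\otimes\Bbb C[t^{\pm1}]\oplus\Bbb Cc\oplus\Bbb Cd$ (in the untwisted case, with its $\sigma$-twisted analogue in general), where $\dot{\frak g}$ is finite dimensional simple, equipped with its standard normalized invariant form and Cartan subalgebra $\frak h=\dot{\frak h}\oplus\Bbb Cc\oplus\Bbb Cd$. Axioms (a)--(c) are the classical facts that this form is nondegenerate and invariant, that $\frak h$ is finite dimensional, abelian and self-centralizing with $\ad_h$ diagonalizable, and that real root vectors are $\ad$-nilpotent. Writing $\dot\Delta$ for the (irreducible) root system of $\dot{\frak g}$, one has $T_{re}=\{\dot\alpha+n\delta\mid\dot\alpha\in\dot\Delta,\ n\in\Bbb Z\}$ and $T_\circ=\Bbb Z\delta$; irreducibility of $\dot\Delta$ gives (d), while $G=\sspan_{\Bbb Z}(T_\circ)=\Bbb Z\delta\cong\Bbb Z$ is free of rank one, so the nullity is $1$ and (e) holds. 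Axiom (f) holds since $n\delta+(\dot\alpha-n\delta)=\dot\alpha\in T$. Finally $\frak K_c=\dot{\frak g}\otimes\Bbb C[t^{\pm1}]\oplus\Bbb Cc$, and as $c$ pairs nontrivially only with $d\notin\frak K_c$, one computes $\frak K_c^\perp=\Bbb Cc\subseteq\frak K_c$, so $\frak K$ is tame.

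For the converse, let $(\frak K,(-,-),\frak h)$ be a tame extended affine Lie algebra of nullity one. By axiom (e), $G\cong\Bbb Z$; I fix a generator $\delta$ of $G$, so that the radical of $(-,-)$ on $\sspan_{\Bbb R}T$ is the one-dimensional space $\Bbb R\delta$ and $T_\circ\subseteq\Bbb Z\delta$. Passing to the quotient by this radical, $T_{re}$ projects onto a finite root system $\dot\Delta$, irreducible by (d); lifting, every real root has the form $\dot\alpha+n\delta$, and the $\delta$-degree endows the core $\frak K_c$ with a $\Bbb Z$-grading. The essential step is then to identify the centerless core $\frak K_c/Z(\frak K_c)$: by the structure theory of extended affine Lie algebras it is a centerless Lie torus of nullity one, and such Lie tori are precisely the (possibly twisted) loop algebras $L(\dot{\frak g},\sigma)$ of a finite dimensional simple Lie algebra $\dot{\frak g}$, which in turn correspond to affine Kac--Moody algebras by Kac's classification of finite-order automorphisms. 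Tameness, via $\frak K_c^\perp\subseteq\frak K_c$, forces $Z(\frak K_c)$ to be one-dimensional, so that $\frak K_c$ is the universal (affine) central extension $\hat{L}(\dot{\frak g},\sigma)$, and nullity one forces $\frak K/\frak K_c$ to be spanned by a single degree derivation $d$; assembling these pieces recovers the affine Kac--Moody realization $X^{(k)}$.

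The main obstacle is the recognition step in the converse: showing that the $\Bbb Z$-graded centerless core is genuinely a loop algebra of a finite dimensional simple Lie algebra, and that tameness pins the central extension down to the one-dimensional affine one. The forward direction and the initial reductions in the converse are routine; the content of the theorem is concentrated in this loop-algebra recognition together with the finite-order-automorphism classification that matches it to $X^{(k)}$.
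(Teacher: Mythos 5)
First, a point about the comparison itself: the paper does not prove this statement at all --- it is imported verbatim, with citation, as Theorem 2.23 of \cite{ABGP}, and is only used later as a black box (in the proof of Proposition \ref{3.12}). So your attempt can only be measured against the proof in \cite{ABGP}, not against anything in this note. Measured that way, your forward direction is fine: the verification of axioms (a)--(f), the computation $\frak K_c^{\perp}=\Bbb Cc$, and the nullity count are exactly the routine checks, and you carry them out correctly.

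The converse, however, contains a genuine gap, and it sits precisely where you locate ``the real work.'' The step ``by the structure theory of extended affine Lie algebras [the centerless core] is a centerless Lie torus of nullity one, and such Lie tori are precisely the (possibly twisted) loop algebras $L(\dot{\frak g},\sigma)$'' is not an admissible citation from the standpoint of this theorem: in the standard literature (e.g.\ Neher's lectures, cited in this paper as \cite{Neher}) the statement that nullity-one centerless Lie tori are exactly loop algebras is \emph{attributed to} \cite{ABGP}, i.e.\ it is essentially the theorem you are trying to prove, so invoking it makes the argument circular. The independent routes to that recognition (Allison--Berman--Faulkner--Pianzola's multiloop realization via descent, Neher--Yoshii structure theory) postdate \cite{ABGP} and require additional verifications (e.g.\ that the centerless core is a Lie torus at all, and is fgc) that you do not address; the actual proof in \cite{ABGP} instead works from the axioms, first identifying the nullity-one extended affine root system with an affine root system and then matching Chevalley-type generators of the core against an affine generalized Cartan matrix. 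A second, more local, inaccuracy: one-dimensionality of $Z(\frak K_c)$ is not forced by tameness alone. Tameness gives the \emph{lower} bound --- $t_{\delta}$ lies in $\frak K_c^{\perp}$, hence by tameness in $\frak K_c$, so the core has nonzero center (without tameness one has the standard counterexample $L(\dot{\frak g})\oplus\Bbb C t_{\delta}\oplus\Bbb C d$ with centerless core) --- while the \emph{upper} bound comes from Garland's theorem that the universal central extension of a loop algebra has one-dimensional center, since the core is a perfect central extension of $L(\dot{\frak g},\sigma)$. As a roadmap your proposal is sound, but as a proof it defers its crux to a result that is logically downstream of the theorem itself.
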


\begin{Pro}\label{3.12}
Suppose that $S$ is a nonempty symmetric closed subset of the root system $R$ of $\frak L$ with $S_{re} :=
R_{re} \cap S\neq \emptyset$. Set
$$S_0 := S \cap R_0,~~{\rm and}~~\dot S:=\{\dot\a\in \dot R~|~(\dot \a +\Bbb Z \d)\cap S_0\not =\emptyset\}.$$
If for $0\neq \dot \a\in \dot S$, $(\dot\a+\Bbb Z \d)\cap R_0\subseteq S$, then the following statements hold:
\begin{itemize}
    \item [(i)] $\dot S$ is a finite root system so that we may assume with irreducible components $\dot S (1), \cdots, \dot S(k)$ of $\dot S$.
\item[(ii)] $\frak L(i):=\frak H +\sum_{\a\in S(i)^\times} \frak L ^\a +\sum_{\a,\b\in S(i)^\times} [\frak L^\a,\frak L^\b] $ is an affine Lie algebra (up to a central space) in which $S(i):=(\dot S(i) +\Bbb Z\d) \cap R_0$ for $1\leq i\leq k$.  
\end{itemize}
\end{Pro}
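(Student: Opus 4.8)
The plan is to prove (i) from the closure and symmetry of $S$ together with the translation hypothesis, and to derive (ii) from the Allison--Berman--Gao--Pianzola characterisation recalled in Theorem~\ref{3.11}, by exhibiting each $\frak L(i)$ as a nullity-one tame extended affine Lie algebra modulo a central space.

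For (i), recall first that every even root of $\frak L$ has the shape $\dot\a+n\d$ with $\dot\a$ an even, hence real, root of $\dot R$; thus $\dot S\setminus\{0\}\subseteq \dot R_{re}\cap \dot R_0$, so $\dot S$ is finite, reduced, and spans its own $\Bbb R$-span, and it is symmetric because $S=-S$. The only substantial axiom is reflection-closure, and here I would isolate the standard fact that a symmetric closed subset of a root system is closed under reflections in its real roots: for $\a\in S_{re}$ and $\b\in S$ the $\a$-string through $\b$ is an unbroken chain $\b-p\a,\dots,\b+q\a$ of roots whose mirror endpoint is $r_\a(\b)$, and adding $\pm\a\in S$ one step at a time while invoking closure of $S$ forces every member of the chain, in particular $r_\a(\b)$, to lie in $S$. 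Applying this inside $R$: given $0\neq\dot\a,\dot\b\in\dot S$ the hypothesis $(\dot\a+\Bbb Z\d)\cap R_0\subseteq S$ supplies even real roots $\a=\dot\a+m\d,\ \b=\dot\b+n\d\in S$, and since $(\d,\cdot)=0$ the reflection $r_\a(\b)\in S\cap R_0$ has finite part $r_{\dot\a}(\dot\b)$, whence $r_{\dot\a}(\dot\b)\in\dot S$. Finally $0\in\dot S$: pick $\gamma\in S_{re}$, replace it by the even real root $2\gamma\in S$ if $\gamma$ is odd, so that $\dot\gamma,-\dot\gamma\in\dot S$; adding two even translates $\dot\gamma+m\d,\ -\dot\gamma+m'\d\in S$ with $m+m'\neq0$ gives $(m+m')\d\in S_0$ by closure, i.e. $0\in\dot S$. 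Hence $\dot S$ is a finite root system and splits into irreducible components $\dot S(1),\dots,\dot S(k)$.

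For (ii), fix $i$ and note $S(i)=\dot S(i)+\Bbb Z\d$ consists entirely of even real roots, so $S(i)^\times=S(i)$ and $\frak L(i)$ is an ordinary (even) Lie subalgebra: closure under the bracket follows from $[\frak L^\a,\frak L^\b]\subseteq\frak L^{\a+\b}$ together with the observation that $\a+\b$ is either $0$, an imaginary root $k\d$, or again lies in $S(i)$ because $\dot S(i)$ is a closed component. I would then verify axioms (a)--(f) of an extended affine Lie algebra for $(\frak L(i),(-,-),\frak H)$: $\frak H=\frak L(i)^0$ is finite-dimensional, abelian and self-centralising with diagonalisable adjoint action; the root system is $T=S(i)\cup\Bbb Z\d\cup\{0\}$ with real part $T_{re}=S(i)$ on which root vectors act ad-locally-nilpotently (finite $\a$-strings), and $T_{re}$ is irreducible since $(\dot\a+m\d,\dot\b+n\d)=(\dot\a,\dot\b)$ reduces any orthogonal splitting to one of the irreducible $\dot S(i)$; moreover $T_\circ=\Bbb Z\d\cup\{0\}$ has no isolated roots, as $\dot\a+(m+k)\d\in S(i)$, and $G=\sspan_{\Bbb Z}(T_\circ)=\Bbb Z\d$ is free of rank one, giving nullity one.

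The delicate points, which I expect to be the main obstacle, are the nondegeneracy axiom (a) and tameness, and this is exactly where the phrase ``up to a central space'' enters. The form restricted to $\frak L(i)$ is degenerate, its radical being a toral central subspace $\frak z\subseteq\frak H$ complementary to the span of the coroots $t_\a$ $(\a\in S(i))$ together with $c$ and $d$; passing to $\frak L(i)/\frak z$ restores nondegeneracy. For tameness one identifies the core $\frak K_c=\langle\frak L^\a\mid\a\in S(i)\rangle$ and checks $\frak K_c^\perp\subseteq\frak K_c$ modulo $\frak z$, which holds because the only part of $\frak L(i)$ pairing trivially with all real root spaces is $\frak z$ itself. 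Granting these, $\frak L(i)/\frak z$ is a tame extended affine Lie algebra of nullity one, so Theorem~\ref{3.11} identifies it with an affine Kac--Moody Lie algebra; hence $\frak L(i)$ is an affine Lie algebra up to the central space $\frak z$, as claimed.
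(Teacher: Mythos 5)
Part (i) of your proposal is sound and is essentially the verification the paper leaves to the reader: the paper only records the starting point that $\dot S\neq\{0\}$ (using closedness and $2(R_{1}\cap R_{re})\subseteq R_{0}$ when the chosen root of $S_{re}$ is odd) and then asserts that ``being a finite root system $\dot S$ is easily verified''; your string argument for reflection-closure and your check that $0\in\dot S$ supply exactly those details.

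Part (ii) follows the paper's overall strategy (exhibit a tame extended affine Lie algebra of nullity one and invoke Theorem \ref{3.11}), but the step your entire reduction rests on is false. You claim that the form restricted to $\frak L(i)$ is degenerate, with radical a central subspace $\frak z\subseteq\frak H$ complementary to ${\rm span}\{t_{\dot\alpha}\}+\Bbb Cc+\Bbb Cd$, and you pass to $\frak L(i)/\frak z$ ``to restore nondegeneracy''. In fact the form on $\frak L(i)$ is already nondegenerate: $\frak L(i)$ contains all of $\frak H$ by definition, the form is nondegenerate on $\frak H$, each $\frak L^{\alpha}$ with $\alpha\in S(i)^{\times}$ pairs nondegenerately with $\frak L^{-\alpha}\subseteq\frak L(i)$ (symmetry of $S(i)$), and the degree-$k\delta$ component of $\frak L(i)$ for $k\neq0$ equals $\bigl(\sum_{\dot\alpha\in\dot S(i)^{\times}}\Bbb Ct_{\dot\alpha}\bigr)\otimes t^{k}$, which pairs nondegenerately with the degree-$(-k\delta)$ component. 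Hence the radical is zero; your $\frak z$, although central, is not contained in the radical, so the form does not descend to $\frak L(i)/\frak z$ at all, and the quotient you propose carries no invariant nondegenerate form to feed into Theorem \ref{3.11}. The genuine obstruction is the one you brush past: tameness fails for $(\frak L(i),(-,-),\frak H)$, because the orthogonal complement $T_{i}$ of $\Bbb Cc\oplus\Bbb Cd\oplus\sum_{\dot\alpha\in\dot S(i)^{\times}}\Bbb Ct_{\dot\alpha}$ inside $\frak H$ (together with $c_{1},d_{1}$ in type $A(n,n)^{(1)}$) is orthogonal to the core yet not contained in it. The paper resolves this by a splitting, not a quotient: it sets $\frak H(i):=\Bbb Cc_{2}\oplus\Bbb Cd_{2}\oplus\sum_{\dot\alpha\in\dot S(i)^{\times}}\Bbb Ct_{\dot\alpha}$, shows that $T_{i}$ is central in $\frak L(i)$ so that $\frak L(i)=T_{i}\oplus\frak k(i)$ where $\frak k(i):=\frak H(i)+\sum_{\alpha\in S(i)^{\times}}\frak L^{\alpha}+\sum_{\alpha,\beta\in S(i)^{\times}}[\frak L^{\alpha},\frak L^{\beta}]$, and then verifies nondegeneracy, the extended affine axioms, nullity one, and tameness for $\frak k(i)$ with respect to the \emph{smaller} Cartan $\frak H(i)$ (steps S1--S8). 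Your argument can be repaired by adopting this splitting — take $\frak z=T_{i}$ (plus $\Bbb Cc_{1}\oplus\Bbb Cd_{1}$ in type $A(n,n)^{(1)}$) and work inside the complementary ideal with the restricted form — but as written the central step fails.
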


\begin{proof}
  (i)   Choose an element $\a=\dot\a+m\d\in S_{re}$ for some $m\in \Bbb Z$. Note that $\dot\a\neq 0$. If $\a\in R_0$, then $\dot\a\in \dot S$. If not, according to Table \ref{Table 2}, Table \ref{Table 3} and Table \ref{Table 4}, we have $\a\in R_1\cap R_{re}\subseteq R_0$. Since $S$ is closed and $2(R_1\cap R_{re})\subseteq R_0$, we have $2\a \in S\cap R_0$. Hence, we get $\dot S\neq \{0\}$. 
  Being a finite root system $\dot S$ is easily verified.\\
  (ii) We prove the statement for type $A(n,n)^{(1)}$ and the proof can be repeated for type $X\neq A(n,n)^{(1)}$ with a similar process. So, we use the notations related to type $A(n,n)^{(1)}$ in the proof process freely.\\
 Assume that $(rc_2+sd_2,\Bbb Cc_2\oplus \Bbb Cd_2)=\{0\}$  where $r,s\in \Bbb C$. 
  Since $(rc_2+sd_2,c_2)=0$ and $(rc_2+sd_2,d_2)=0$, we have $r=s=0$.
  Thus, the restriction of form on $\Bbb Cc_2\oplus \Bbb Cd_2$ is nondegenerate. 
According to (i), $\dot S(i)$ is an irreducible finite root system. If 
$\sum_{\dot\a\in \dot S(i)}\frak L^{\dot \a}$ is a simple Lie algebra, then $\sum_{\dot\a\in \dot S(i)}\Bbb Ct_{\dot\a}$ is contained in the Cartan subalgebra. Hence, form $(-,-)$ is 
  nondegenerate on $\sum_{\dot\a\in \dot S(i)}\Bbb Ct_{\dot\a}$.
Otherwise, the Lie algebra $\sum_{\dot\a\in \dot S(i)}\frak L^{\dot \a}$ is of type $BC(n)$, and it contains $\sum_{\dot\a\in B(n)}\frak L^{\dot \a}$. Hence, 
$\sum_{\dot\a\in \dot S(i)}\Bbb Ct_{\dot\a}\subseteq \sum_{\dot\a\in B(n)}\frak L^{\dot \a}$. This leads us that the form $\fm$ is nondegenerate on $\sum_{\dot\a\in \dot S(i)}\Bbb Ct_{\dot\a}$. Recalling (\ref{form Ann}), we have 
\begin{align}\label{hhhh}
  &  (\Bbb Cc_2\oplus \Bbb Cd_2, \sum_{\dot\a\in \dot S(i)^\times}\Bbb Ct_{\dot\a})=0.
\end{align}
Let $\a=\dot \a +k\d \in S(i)$ for some $k\in \Bbb Z$. Note that 
$t_\a=r_1c_1+r_2c_2+s_1d_1+s_2d_2+h$ (we identify $h\otimes 1$ by $h$).
This implies that 
$$
\a(u)=(t_\a ,u)=\begin{cases}
    s_1\qquad \qquad & u=c_1\\
s_2\qquad\qquad &u=c_2\\
r_1\qquad\qquad &u=d_1\\
r_2\qquad\qquad & u=d_2\\
(h,h')\qquad &u=h
\end{cases}
$$
and 
$$
\a(u)=(\dot\a+k\d)(u)=\begin{cases}
    0\qquad \qquad & u=c_1\\
0\qquad\qquad &u=c_2\\
0\qquad\qquad &u=d_1\\
k\qquad\qquad & u=d_2\\
\dot\a(h')\qquad &u=h
\end{cases}.
$$
Thus
$$
\begin{cases}
   s_1=0\\
   s_2=0\\
   r_1=0\\
   r_2=k\\
   (h,h')=\dot\a(h')=(t_{\dot\a},h')\Rightarrow h=t_{\dot\a}
\end{cases}
$$
So, $t_\a=r_2c_2+h=kc_2+t_{\dot\a}$.
Therefore, actually, with the help of (\ref{hhhh}), we have shown that
$$\Bbb Cc_2\oplus \Bbb Cd_2 + \sum_{\a\in \dot S(i)}\Bbb Ct_{\dot\a}= \Bbb Cc_2\oplus \Bbb Cd_2\oplus \sum_{\dot\a\in \dot S(i)^\times}\Bbb Ct_{\dot\a}.$$
Define 
$$\frak H(i) :=\Bbb Cc_2\oplus \Bbb Cd_2\oplus \sum_{\dot\a\in \dot S(i)^\times}\Bbb Ct_{\dot\a}.$$
It follows that the form is nondegenerate on $\frak H(i)$ and so, there is an orthogonal complement $T_i$ for $\frak H(i)$ in $\frak H$. Now, for $\a\in S(i)$, $x\in \frak L^\a$ and $h\in T_i,$ we have
\begin{align*}
[h,x]=\a(h)x=(t_\a,h)x\in(\frak H(i),T_i)x=\{0\}
,\end{align*}
that is, $T_i$ is contained in the center of $\frak L(i).$ 
Define 
\[\fk(i):=\frak H(i)+\summ{\alpha \in S{(i)}^\times}\frak{L}^{\alpha}+\summ{\alpha,\b \in S{(i)}^\times}[\frak L^\a,\frak L^{\b}].\] 
We show that 
$$\fk(i)=\frak H(i)\op\Bigop{\alpha \in S{(i)}^\times}\frak{L}^{\alpha}\op\Bigop{0\neq k\in\bbbz}(\sum_{ \dot\a\in \dot S(i)^\times}\sum_{t\in\bbbz}[\frak L^{\dot\a+t\d},\frak L^{-\dot\a+(k-t)\d}]).$$
Clearly, $\frak H(i)\op\Bigop{\alpha \in S{(i)}^\times}\frak{L}^{\alpha}\op\Bigop{0\neq k\in\bbbz}(\sum_{ \dot\a\in \dot S(i)^\times}\sum_{t\in\bbbz}[\frak L^{\dot\a+t\d},\frak L^{-\dot\a+(k-t)\d}])\subseteq \fk(i)$. Conversely, assume that 
$u\in [\frak L^\a,\frak L^{\b}]\subseteq \frak L^{\a+\b}$ for some $\alpha,\b \in S{(i)}^\times$.
Let $\a=\dot\a+m\d$ and $\b=\dot\b+n\d$ for some $m,n\in \Bbb Z$. 
 Now, if 
$\dot\a+\dot \b\neq 0$, then $\a+\b\in S(i)^\times$, because $S(i)$ is closed.
So, $\frak L^{\a+\b}\subseteq \Bigop{\alpha \in S{(i)}^\times}\frak{L}^{\alpha}$.
This shows that 
$$u\in \frak H(i)\op\Bigop{\alpha \in S{(i)}^\times}\frak{L}^{\alpha}\op\Bigop{0\neq k\in\bbbz}(\sum_{ \dot\a\in \dot S(i)^\times}\sum_{t\in\bbbz}[\frak L^{\dot\a+t\d},\frak L^{-\dot\a+(k-t)\d}]) .$$
Otherwise, if $\dot\a+\dot \b= 0$, then there is $k\in \Bbb Z$ such that $n=k-m$ and 
$$u\in [\frak L^\a,\frak L^{\b}]=[\frak L^{\dot\a+m\d},\frak L^{-\dot \a+n\d}]=
[\frak L^{\dot\a+m\d},\frak L^{-\dot \a+(k-m)\d}]\subseteq \frak H(i)\op\Bigop{\alpha \in S{(i)}^\times}\frak{L}^{\alpha}\op\Bigop{0\neq k\in\bbbz}(\sum_{ \dot\a\in \dot S(i)^\times}\sum_{t\in\bbbz}[\frak L^{\dot\a+t\d},\frak L^{-\dot\a+(k-t)\d}]).$$
Therefore, the desired equality is achieved.\\
For each element $\a=\dot\a+k\d\in S(i)$ where $k\in \Bbb Z$, we have
$$\a(T_i)=(t_\a, T_i)=(t_{\dot\a}+kd_2, T_i)\subseteq (\frak H(i), T_i)=\{0\}.$$
This forces that if $\a\in S(i)$ and $\b\in S(i)$ are distinct on $\frak H$, then they are distinct on $\frak H(i)$. Hence, the decomposition  
\[\fk(i)=\frak H(i)\op\Bigop{\alpha \in S{(i)}^\times}\frak{L}^{\alpha}\op\Bigop{0\neq k\in\bbbz}(\sum_{ \dot\a\in \dot S(i)^\times}\sum_{t\in\bbbz}[\frak L^{\dot\a+t\d},\frak L^{-\dot\a+(k-t)\d}])\] coincides with  the root space decomposition of $\fk(i)$ with respect to $\frak H(i).$\\
We claim that $\fk(i)$ is an affine Lie algebra. To prove this, we want to apply 
Theorem \ref{3.11}. Therefore, we will provide the details in the following steps:
\begin{itemize}
    \item [\bf S1.] The form $(-, -)$ is nondegenerate and invariant on $\frak k(i)$.\\
Clearly, the invariant is inherited from $\frak L$ to $\frak k(i)$.
   Assume that $x\in \frak k(i)$ and $(x,\frak k(i))=0$. We can write 
   $$x=\underbrace{r_1t_\a+r_2c_2+r_3d_2}_{\in \frak H(i)}+\underbrace{r_4y_{\b}}_{\in \sum_{ \a\in {S(i)^\times}}\frak L^\a}+\underbrace{r_5h\otimes t^l}_{\in \sum_{ \dot\a\in \dot S(i)^\times}\sum_{m,n\in\bbbz}[\frak L^{\dot\a+m\d},\frak L^{-\dot\a+n\d}]}.$$
As $(x,c_2)=(x,d_2)=0$, we get $r_2=r_3=0$. Thus 
$$x=r_1t_\a+r_4y_\b+r_5h\otimes t^l.$$
We know that $t_\a$ and $y_\b$ identify by $h'\otimes t^0$ and $y_{\dot\b}\otimes t^m$ respectively. It follows from 
$(x,h'\otimes t^0)=0$ that $r_1=0$. By choosing the appropriate members of $\frak k(i)$ and repeating the above process, it is concluded that $r_4=r_5=0$. Hence, $x=0$.
    \item [\bf S2.] $\frak H(i)$ is finite dimensional, abelian and self-centralizing. Furthermore, ${\rm ad}_h:\frak k(i)\longrightarrow \frak k(i)$  is diagonalizable for all $h\in \frak H(i)$.\\
Clearly, finite dimensional and abelian are inherited from $\frak H$ to $\frak H(i)$. We have 
$$\frak H(i)\subseteq \{x\in \frak k(i) \mid [x,\frak H(i)]=\{0\}\}.$$
Conversely, assume that $x\in \frak k(i)$ and $[x,\frak H(i)]=\{0\}$. Then, 
$x\in \frak L^\a$ with $\a\neq 0$. Thus for all $h\in \frak H(i)$, we get
$$\a(h)x=[h,x]=-[x,h]=0.$$
It follows that ${\rm ad_h}$ is zero, a contradiction. For the last statement, we just note that ${\rm ad}_h$ is diagonalizable for all $h\in \frak H$.
    \item [\bf S3.]  ${\rm ad}_x:\frak k(i)\longrightarrow \frak k(i)$ is locally nilpotent for all $x\in \frak k^\a$ and $\a\in S(i)_{re}$.\\
For $y\in \frak L^\b$, we have
$$({\rm ad}_x)^l(y)=\underbrace{[x[x,\cdots ,\underbrace{x,\underbrace{[x,y]}_{\in \frak L^{\a+\b}}]}_{\in\frak L^{2\a+\b}}\cdots]}_{\in \frak L^{l\a+\b}}.$$
This is while $l\a+\b$ is not root for sufficiently large $l$.
    \item [\bf S4.]  $S(i)_{re}$ is irreducible. \\
Assume that $S(i)_{re}=A\cup B$ with $(A,B)=0$, $A\neq \emptyset$ and $B\neq\emptyset$ . Then $\dot S(i)=\dot A\cup \dot B$ with 
$(\dot A,\dot B)=0$. Since $\dot S(i)$ is irreducible, it follows that either $\dot A=\emptyset$ or $\dot B=\emptyset$. This leads us to either $A=\emptyset$ or $ B=\emptyset$, a contradiction.
    \item [\bf S5.]  $S(i)_\circ$ has no isolated roots and ${\rm span}_{\Bbb Z}(S(i)_\circ)\cong \Bbb Z$.\\
Note that $S(i)_\circ=\Bbb Z\d$ and $(\dot\a+\Bbb Z\d)\cap R_0\subseteq S$ for $0\neq \a\in \dot S$. So, we have ${\rm span}_{\Bbb Z}(S(i)_\circ)=\Bbb Z\d\cong \Bbb Z$.
    \item [\bf S6.] $(\fk(i))_c=\sum_{ \a\in {S(i)^\times}}\frak L^\a\op\sum_{ \dot\a\in \dot 
S(i)^\times}\sum_{m,n\in\bbbz}
[\frak L^{\dot\a+m\d},\frak L^{-\dot\a+n\d}]$.\\
Note that $R_0\setminus \{0\}\subseteq R_{re}$ according to Table \ref{Table 3} and Table \ref{Table 4}. Thus, 
if $0\neq \a\in S(i)^\times \subseteq R_0$, then $(\a,\a)\neq 0$, and so
$\sum_{ \a\in {S(i)^\times}}\frak L^\a\subseteq (\fk(i))_c$.
Assume that $\dot \a, -\dot\a \in \dot S(i)^\times$. Then $\dot\a+m\d, -\dot\a+n\d \in S(i)^\times\subseteq R_0$ where $m,n\in \Bbb Z$. Clearly, 
$\frak L^{\dot\a+m\d}, \frak L^{-\dot\a+n\d}\subseteq (\fk(i))_c$. As $(\fk(i))_c$ is algebra, 
it follows that $[\frak L^{\dot\a+m\d}, \frak L^{-\dot\a+n\d}]\subseteq (\fk(i))_c$.
So far we have shown that
$$\sum_{ \a\in {S(i)^\times}}\frak L^\a\op\sum_{ \dot\a\in \dot S(i)^\times}\sum_{m,n\in\bbbz}[\frak L^{\dot\a+m\d},\frak L^{-\dot\a+n\d}]\subseteq(\fk(i))_c.$$
To achieve equality, it is enough to note that the materials of $(\fk(i))_c$ are either materials in the form $\frak L^\a$ or a summation of materials in the form of 
$[\frak L^\a,\frak L^\b] \subseteq \frak L^{\a+\b}$.
    \item [\bf S7.]  $\fk(i)=(\fk(i))_c\oplus \Bbb Cc_2$.\\
We note that $\frak H(i) :=\Bbb Cc_2\oplus \Bbb Cd_2\oplus \sum_{\dot\a\in \dot S(i)^\times}\Bbb Ct_{\dot\a}$ and 
$$\fk(i)=\frak H(i)\op\Bigop{\alpha \in S{(i)}^\times}\frak{L}^{\alpha}\op\Bigop{0\neq k\in\bbbz}(\sum_{ \dot\a\in \dot S(i)^\times}\sum_{t\in\bbbz}[\frak L^{\dot\a+t\d},\frak L^{-\dot\a+(k-t)\d}]).$$
Since $[\frak L^{\dot \a +m\d},\frak L^{-\dot\a+l\d}]\subseteq \frak L^{(m+l)\d}=\frak H(i)\otimes t^{m+l}$, we have
\begin{align*}
    [h_1\otimes t^m, h_2\otimes t^l]&=[h_1,h_2]\otimes t^{m+l}+\omega(h_1\otimes t^m,h_2\otimes t^l)\\
    &=[h_1,h_2]\otimes t^{m+l}+\frac{1}{n+1}\delta_{m+l,0}(\delta^*(h_1),h_2)c_1+m\delta_{m+l,0}(h_1,h_2)c_2\\
    &=[h_1,h_2]\otimes t^{m+l}+m\delta_{m+l,0}(h_1,h_2)c_2.
\end{align*}
Moreover, 
$$[d_2,h\otimes t^m]=mh\otimes t^m.$$
With a comparison, it is clear that from $\frak H(i)$, only $\Bbb Cc_2$ is not contained in $(\fk(i))_c$. Hence, $\fk(i)=(\fk(i))_c\oplus \Bbb Cc_2$.
    \item [\bf S8.]  $\frak k(i)$ is tame.\\
 Assume that $x\in (\frak k(i)_c)^\perp$. Then $x=y+rd_2$ when $y\in (\frak k(i))c$ and $r\in \Bbb C$. If $r\neq 0$, then 
 $$0=(x,c_2)=(y+rd_2,c_2)=r(d_2,c_2=r,$$
which is a contradiction. This implies that $x\in (\frak k(i))c$, as desired.
  \end{itemize}
We note that the nullity of $\frak k(i)$ is one, according to S5. 
Therefore, 
$$\frak L(i)=\frak H +\sum_{\a\in S(i)^\times} \frak L ^\a +\sum_{\a,\b\in S(i)^\times} [\frak L^\a,\frak L^\b]=Z\oplus \frak k(i)$$ 
is an affine Lie algebra up to the central space $Z$.
\end{proof}

Keep the notation of Proposition \ref{3.12}. For $i\in \{1,\cdots, k\}$, define

$$P_i:=\begin{cases}
	S(i)^{ln}\cup-S(i)^{in}\cup\Bbb Z^{\geq 0}\delta  & {\rm if}~S(i)~{\rm is~up-nilpotent~hybrid}\\
	S(i)^{ln}\cup-S(i)^{in}\cup\Bbb Z^{\leq 0}\delta  & {\rm if}~S(i)~{\rm is~down-nilpotent~hybrid}
\end{cases}.$$


\begin{remark}\label{3.13}
  Keep the notation of Proposition \ref{3.12}. With an imitation of \cite[Remark 5.5]{Yousofzadeh:Finite weight 2020}, we can suppose that there is 
   \begin{itemize}
       \item[($\dagger$)] a base $\Pi_i$ of $S(i)$ such that the set of positive roots of $S(i)$ with respect to $\Pi_i$ is a subset of $P_i$,
        \item [($\ddagger$)] a map 
   $$\zeta : {\rm span}_{\Bbb R}S(i)\longrightarrow \Bbb R$$
   with $\zeta (\d)>0$ and $P_i=S(i)^+\cup S(i)^\circ$.
   \end{itemize}
 \end{remark}

\begin{Lem}\label{3.14}
Keep the notation of Proposition \ref{3.12}.
Suppose that $i\in \{1,\cdots ,k\}$ and $S(i)=S(i)^+\cup S(i)^\circ\cup S(i)^-$ is a  triangular decomposition for $S(i)$ with corresponding linear functional $\zeta$ such that 
	$\zeta (\delta)>0$, $S(i)^+\cap S_{re}\subseteq S^{ln}$ and 
	$S(i)^-\cap S_{re}\subseteq S^{in}$. Assume that $W$ is an $\frak L_0$-submodule of $M$. Then there is a positive integer $p$ and $\lambda \in {\rm supp}(W)$ with $(\lambda +\Bbb Z^{>0}p\delta)\cap {\rm supp}(W)=\emptyset$.
\end{Lem}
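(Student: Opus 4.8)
The plan is to reduce everything to the single assertion that $\delta\in{\frak B}_W$, and then to extract the required $\lambda$ by maximality. Concretely, suppose we have already shown that $p\delta\in{\frak B}_W$ for some $p\in\Bbb Z^{>0}$. We may assume $W\neq\{0\}$, so ${\rm supp}(W)\neq\emptyset$; fix $\lambda_0\in{\rm supp}(W)$. By the definition of ${\frak B}_W$ the set $\{k\in\Bbb Z^{>0}\mid \lambda_0+kp\delta\in{\rm supp}(W)\}$ is finite, so it has a maximum $k_0$ (take $k_0=0$ if it is empty). Putting $\lambda:=\lambda_0+k_0p\delta\in{\rm supp}(W)$, maximality of $k_0$ forces $(\lambda+\Bbb Z^{>0}p\delta)\cap{\rm supp}(W)=\emptyset$, which is exactly the conclusion. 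Thus the whole problem is to produce $p\in\Bbb Z^{>0}$ with $p\delta\in{\frak B}_W$.

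I would obtain such a $p$ from one pair of opposite real roots of the affine component $S(i)$. By Proposition \ref{3.12}(i), $\dot S(i)$ is a nonzero irreducible finite root system, so I fix $\dot\alpha\in\dot S(i)^\times$; then $-\dot\alpha\in\dot S(i)^\times$ as well, and by the hypothesis of Proposition \ref{3.12} together with $R_{re}=\dot R_{re}+\Bbb Z\delta$ (Tables \ref{Table 2}--\ref{Table 4}) we have $\pm\dot\alpha+m\delta\in S(i)\cap S_{re}$ for every $m\in\Bbb Z$. Since $\zeta(\delta)>0$, the values $\zeta(\dot\alpha+m\delta)$ and $\zeta(-\dot\alpha+m\delta)$ tend to $+\infty$ as $m\to+\infty$, so I may choose $m_1,m_2$ large enough that
$$\gamma_1:=\dot\alpha+m_1\delta\in S(i)^+\cap S_{re},\qquad \gamma_2:=-\dot\alpha+m_2\delta\in S(i)^+\cap S_{re}.$$
Because $W$ has shadow (Remark \ref{3.10}, Proposition \ref{3.6}), we have $S^{ln}\subseteq{\frak B}_W$ and $S^{in}\subseteq{\frak C}_W$; hence the hypotheses $S(i)^+\cap S_{re}\subseteq S^{ln}$ and $S(i)^-\cap S_{re}\subseteq S^{in}$ yield $\gamma_1\in{\frak B}_W$ and, since $-\gamma_2\in S(i)^-\cap S_{re}$, also $-\gamma_2\in{\frak C}_W$.

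The key identity is $\gamma_1+\gamma_2=(m_1+m_2)\delta=:p\delta$, where $p=m_1+m_2>0$ because $\zeta(p\delta)=\zeta(\gamma_1)+\zeta(\gamma_2)>0$ and $\zeta(\delta)>0$. Suppose toward a contradiction that $p\delta\notin{\frak B}_W$. Then there is $\mu\in{\rm supp}(W)$ with $\mu+kp\delta\in{\rm supp}(W)$ for infinitely many $k\in\Bbb Z^{>0}$. Since $-\gamma_2\in{\frak C}_W$, Lemma \ref{3.2}(ii) gives $k(-\gamma_2)\in{\frak C}_W$, so $\mu+kp\delta-k\gamma_2=\mu+k\gamma_1\in{\rm supp}(W)$ for those infinitely many $k$; this contradicts $\gamma_1\in{\frak B}_W$. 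Therefore $p\delta\in{\frak B}_W$ (equivalently $\delta\in{\frak B}_W$ by Lemma \ref{3.2}(i)), and the first paragraph completes the proof.

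I expect the main obstacle to be purely bookkeeping rather than conceptual: one must be sure that the chosen $\gamma_1,\gamma_2$ are genuine elements of $S(i)$ lying in $S_{re}$ (so that the shadow inclusions apply) and that $\dot S(i)^\times\neq\emptyset$, both of which rest on Proposition \ref{3.12} and on $R_{im}=\Bbb Z\delta$ for the untwisted types. The conceptual engine is the interaction between ${\frak B}_W$ (finiteness in the upward direction) and ${\frak C}_W$ (translation-invariance in the downward direction) through the relation $\gamma_1+\gamma_2=p\delta$, which is what transfers real-root information into the imaginary direction $\delta$.
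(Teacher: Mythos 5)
Your overall mechanism is sound, and since the paper's own proof of Lemma \ref{3.14} is nothing but a pointer to \cite[Proposition 2.17]{D-G} and \cite[Lemma 5.1]{Yousofzadeh:Finite weight 2020}, a self-contained argument is genuinely worth having. The reduction of the lemma to the single claim $p\delta\in{\frak B}_W$, the choice $\gamma_1=\dot\alpha+m_1\delta$, $\gamma_2=-\dot\alpha+m_2\delta$ with $\gamma_1+\gamma_2=p\delta$, and the finiteness-versus-translation contradiction via Lemma \ref{3.2}(ii) are all correct; so is your bookkeeping that $\gamma_1,\gamma_2\in S(i)\cap S_{re}$, which indeed rests on the hypothesis of Proposition \ref{3.12}, on $S(i)\subseteq R_0$, and on the facts $R_0=\dot R_0+\Bbb Z\delta$ and $R_0\setminus\Bbb Z\delta\subseteq R_{re}$ visible in Tables \ref{Table 2}--\ref{Table 4}.

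The gap is the sentence ``Because $W$ has shadow (Remark \ref{3.10}, Proposition \ref{3.6}), we have $S^{ln}\subseteq{\frak B}_W$ and $S^{in}\subseteq{\frak C}_W$.'' Neither citation gives $W$ a shadow: Remark \ref{3.10} \emph{assumes} shadow rather than producing it, and Proposition \ref{3.6} applies to simple modules or to finite weight modules satisfying the dichotomy, whereas $W$ is merely an ${\frak L}_0$-submodule (it need not be simple, and in the application inside Proposition \ref{3.15} the ambient module is only assumed to have shadow, not to be finite weight). Worse, the two inclusions require genuinely different arguments, and the ${\frak C}_W$ one cannot be routed through any shadow statement about $M$ at all: knowing $-\gamma_2\in{\frak C}_M$, i.e.\ $-\gamma_2+{\rm supp}(M)\subseteq{\rm supp}(M)$, says nothing about ${\rm supp}(W)$. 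The correct substitutes are: (a) for $\gamma_1$, use the intended standing hypothesis that $M$ has shadow (this is what (\ref{fact}) records, and it is available in every application of the lemma; it is genuinely needed, since local nilpotency by itself imposes no finiteness on support strings) to get $\gamma_1\in S^{ln}\subseteq{\frak B}_M\cap R_{re}$, and then ${\frak B}_M\subseteq{\frak B}_W$ simply because ${\rm supp}(W)\subseteq{\rm supp}(M)$; (b) for $-\gamma_2$, use that $-\gamma_2\in S(i)^-\cap S_{re}\subseteq S^{in}$ together with $S(i)\subseteq R_0$, so that every $0\neq x\in{\frak L}^{-\gamma_2}\subseteq{\frak L}_0$ preserves $W$ and acts injectively on $M$, hence injectively on $W$, giving $-\gamma_2+{\rm supp}(W)\subseteq{\rm supp}(W)$, i.e.\ $-\gamma_2\in{\frak C}_W$. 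Point (b) is the only place where the hypothesis that $W$ is an ${\frak L}_0$-submodule (rather than an arbitrary subspace with a support) enters, so it must be made explicit. With (a) and (b) replacing the shadow claim, your proof is complete and correct.
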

\begin{proof}
It is obtained with appropriate changes in proof \cite[Proposition 2.17]{D-G} or repeat the proof of \cite[Lemma 5.1]{Yousofzadeh:Finite weight 2020}.
\end{proof}

 \begin{Pro}\label{3.15}
 Suppose that $\frak k$ is a subalgebra of $\frak L$ containing $\frak H$ with corresponding root system $T$, and $M$ is a $\frak k$-module having shadow. If 
 \begin{itemize}
     \item [1)]$S$ is a symmetric closed nonempty subset
of $T$,
\item [2)] $S_{re}\neq \emptyset$ and $(S+\Bbb Z\d)\cap R_0\subseteq S$, 
\item [3)]$S$ is hybrid,
 \end{itemize}
then either all real roots of $S$ are
up-nilpotent hybrid or all are down-nilpotent hybrid. 
  \end{Pro}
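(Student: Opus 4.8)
The plan is to argue by contradiction, reducing the whole statement to a single clean mechanism. Since $S$ is hybrid, every root in $S_{re}$ is either up-nilpotent or down-nilpotent, so it suffices to rule out the mixed situation: I assume $S$ contains a down-nilpotent real root $\beta$ and an up-nilpotent real root $\gamma$ and derive a contradiction. The strategy is to show that the coexistence of the two directions forces ${\rm supp}(M)$ to be invariant under translation by $\delta$, and that such $\delta$-invariance is flatly incompatible with the existence of any hybrid root. Notably this route bypasses the affine reduction of Proposition \ref{3.12}, though that machinery could be used instead to invoke a single-affine-algebra statement componentwise.

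First I would extract from $\beta$ and $\gamma$ the fact that $\pm\delta\in\frak C_M$. For the down-nilpotent root $\beta$ one has $\pm\beta+\Bbb Z^{\geq m}\delta\subseteq S^{in}$ for some $m\in\Bbb Z^{>0}$, and since $M$ has shadow these roots lie in $\frak C_M$. Adding $\beta+n_1\delta$ and $-\beta+n_2\delta$ (with $n_1,n_2\geq m$) via Lemma \ref{3.2}(ii) gives $(n_1+n_2)\delta\in\frak C_M$, so $N\delta\in\frak C_M$ for all large $N$. Symmetrically, up-nilpotency of $\gamma$, for which $\pm\gamma+\Bbb Z^{\leq -m'}\delta\subseteq S^{in}\subseteq\frak C_M$, yields $-N\delta\in\frak C_M$ for all large $N$. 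Combining an $N\delta$ and a $-(N+1)\delta$ (and the reverse) once more through Lemma \ref{3.2}(ii) produces both $\delta\in\frak C_M$ and $-\delta\in\frak C_M$; hence ${\rm supp}(M)+\delta\subseteq{\rm supp}(M)$ and ${\rm supp}(M)-\delta\subseteq{\rm supp}(M)$, so ${\rm supp}(M)+\delta={\rm supp}(M)$.

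Next I would observe that this $\delta$-invariance makes membership in $\frak B_M$ and $\frak C_M$ depend only on a root modulo $\Bbb Z\delta$. Indeed, for any $\alpha$ and any $n\in\Bbb Z$, $\delta$-invariance of the support gives $(\alpha+n\delta)+{\rm supp}(M)=\alpha+{\rm supp}(M)$ and $\{k>0\mid \lambda+k(\alpha+n\delta)\in{\rm supp}(M)\}=\{k>0\mid\lambda+k\alpha\in{\rm supp}(M)\}$, so $\alpha\in\frak C_M$ (resp. $\frak B_M$) if and only if $\alpha+n\delta\in\frak C_M$ (resp. $\frak B_M$). Applying this to the hybrid root $\beta$ itself, there are $n_1,n_2$ with $\beta+n_1\delta\in S^{in}\subseteq\frak C_M$ and $\beta+n_2\delta\in S^{ln}\subseteq\frak B_M$; the previous sentence forces $\beta+n_2\delta\in\frak C_M$ as well, hence $\beta+n_2\delta\in\frak B_M\cap\frak C_M$. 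But $\frak B_M\cap\frak C_M=\emptyset$ for a nonzero module, since any $\alpha\in\frak C_M$ yields the infinite chain $\lambda,\lambda+\alpha,\lambda+2\alpha,\dots$ inside ${\rm supp}(M)$, contradicting the finiteness condition defining $\frak B_M$. This contradiction shows the mixed case is impossible, and the proposition follows.

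The conceptual heart, and the step I expect to need the most care, is the passage from ``one up- and one down-nilpotent root coexist'' to ``${\rm supp}(M)$ is $\delta$-periodic''; everything afterward is bookkeeping with Lemma \ref{3.2}(ii) and the definition of shadow. I should also check at the outset that the hypotheses on $S$ (symmetric, closed, $(S+\Bbb Z\delta)\cap R_0\subseteq S$), together with Remark \ref{3.10}, genuinely guarantee that each real root of $S$ carries its full $\delta$-string and is classified as up- or down-nilpotent within $S$, so that the phrases ``down-nilpotent $\beta$'' and ``up-nilpotent $\gamma$'' have their intended meaning and license writing $\pm\beta+\Bbb Z^{\geq m}\delta\subseteq S^{in}$ and $\pm\gamma+\Bbb Z^{\leq -m'}\delta\subseteq S^{in}$.
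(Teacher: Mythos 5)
Your argument is correct, and it takes a genuinely different --- and far more elementary --- route than the paper's proof. The paper reduces to $S\subseteq R_0$, invokes Proposition \ref{3.12} to split $\dot S$ into irreducible components, proves uniformity inside each component by a symmetry/closedness/root-string argument on the partition $S_{re}=K_1\uplus K_2$, and then rules out two components of opposite type by a second, heavier argument combining Remark \ref{3.13}, the support-gap Lemma \ref{3.14}, and the bracket computation $[\frak{L}(j)^{\beta-mp\delta},\frak{L}(j)^{2mp\delta}]=\frak{L}(j)^{\beta+mp\delta}$ inside the affine Lie algebra attached to a component. Your single mechanism --- one down-nilpotent root $\beta$ plus one up-nilpotent root $\gamma$ forces $\pm\delta\in\frak C_M$ by Lemma \ref{3.2}(ii), hence ${\rm supp}(M)+\delta={\rm supp}(M)$, hence membership in $\frak B_M$ and $\frak C_M$ is constant along $\delta$-strings, hence the hybrid root $\beta$ lands in $\frak B_M\cap\frak C_M=\emptyset$ --- disposes of the within-component and cross-component cases at once, with no use of the affine machinery. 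Three small points. First, your closing worry is unnecessary: the inclusions $\pm\beta+\Bbb Z^{\geq m}\delta\subseteq S^{in}$ and $\pm\gamma+\Bbb Z^{\leq -m'}\delta\subseteq S^{in}$ are precisely what the hypothesis that $S$ is hybrid asserts (interpreted via Remark \ref{3.10}), so they come with the hypothesis rather than needing to be re-derived; hypotheses (1)--(2) serve to make hybridity well defined and to feed the paper's componentwise argument, and your proof shows they are otherwise not needed --- indeed you prove the stronger statement that an up-nilpotent and a down-nilpotent hybrid root can never coexist for a module with shadow, wherever they sit. Second, the disjointness $\frak B_M\cap\frak C_M=\emptyset$ requires ${\rm supp}(M)\neq\emptyset$, i.e.\ $M\neq 0$; this is implicit throughout the paper (injectivity and local nilpotency are not mutually exclusive on the zero module), so state it, but it is not a gap. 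Third, what the paper's longer route buys is reusable infrastructure --- the components $S(i)$, the parabolic sets $P_i$, and the functionals of Remark \ref{3.13} --- which Propositions \ref{3.16} and \ref{3.17} then consume; your proof does not produce that, but for the proposition as stated it is complete.
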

\begin{proof}
 We know from Table \ref{Table 2} and Table \ref{Table 4} that if $\a \in R_{re}\cap R_1$, then $2\a \in R_{re}\cap R_0$. Thus, if 
 $\a \in R_{re}\cap R_1$, 
 then by Theorem \ref{3.8}, $\a\in R^{ln}(V)$ if and only if $2\a\in R^{ln}(V)$.
 So,  we can assume $S\sub R_0$ without loss of generality.
 By Proposition \ref{3.13}, we have  $\dot{S}=\{\dot{\alpha}\in \dot R \mid (\dot\a+\bbbz\d)\cap S\neq \emptyset\}$ is a finite root system with irreducible components  $\dot S(1),\ldots, \dot S(k)$.
By the assumption and Remark \ref{3.10} (see Theorem \ref{3.9}), we can write 
$$\hbox{\small $S_{re}= \underbrace{\{\a\in S_{re}\mid \exists N, ~\a+\bbbz^{\geq N}\d\sub  R^{ln}\}}_{:=K_1}\uplus  \underbrace{\{\a\in S_{re}\mid \exists N, ~\a+\bbbz^{\geq N}\d\sub  R^{in}\}}_{:=K_2}$ }$$ in which ``~$\uplus$~'' indicates disjoint union.
In the following two cases, we show that $K_1\cup \{0\}$ and $K_2\cup \{0\}$ are symmetric closed subsets of $S_{re}\cup \{0\}$:
\begin{itemize}
    \item [\bf C1.]  Let $\a,\b \in K_1\cup \{0\}$. Then, we prove that $\a+\b\in K_1\cup \{0\}$. For some $N_1$ and $N_2$
$$\a+\Bbb Z^{\geq N_1}\d\subseteq R^{ln},~~{\rm and}~~\b+\Bbb Z^{\geq N_1}\d\subseteq R^{ln}.$$
Set $N=N_1+N_2$. For $n>N$, by Theorem \ref{3.8}, we have
$$\a+\b+n\d=\underbrace{\a+N_1\d}_{\in R^{ln}}+\underbrace{\b+(n-N_1)\d}_{\in R^{ln}}\in R^{ln}.$$
Note that $\a+\b+n\d\in R$ , because $S$ is closed and $\a+\b\in S\subseteq R_0$. 
On the other hand, by Remark \ref{3.10} (see Theorem \ref{3.9}),  $-\a+\Bbb Z ^{\geq N_3}\subseteq R^{ln}$ for some $N_3$. Therefore, $K_1\cup \{0\}$ is a  symmetric closed subset of $S_{re}\cup \{0\}$.
\item [\bf C2.] Let $\a,\b \in K_2\cup \{0\}$. If $\a+\b =0$, then there is nothing to prove. Assume that $\a+\b \neq 0$. Then, for appropriates $n$ and $m$, $\a+n\d,\b+m\d \in S_{re}\cap R^{in}\subseteq \frak{C}_M$. By Lemma \ref{3.2}(ii), 
$\a+\b+(n+m)\d \in \frak C_M$. As $S$ is closed, we get $0\neq \a+\b+(n+m)\d\in S\subseteq R_0$. Hence, $\a+\b+(n+m)\d \in R_{re}$, and so $\a+\b +(n+m)\d\in \frak C_M\cap R_{re}=R^{in}$. This implies that $K_2$ is closed. 
On the other hand, by Remark \ref{3.10} (see Theorem \ref{3.9}),  $-\a+\Bbb Z ^{\geq N}\subseteq R^{in}$ for some $N$. Thus,  $K_2\cup \{0\}$ is a symmetric closed subset of $S_{re}\cup \{0\}$.
\end{itemize}
Since $S$ is symmetric and closed, we get $\dot S$ is symmetric and closed. This implies that $\dot S(1), \cdots , \dot S(k)$ are symmetric and closed.
 A straightforward check shows that $S(1), \cdots , S(k)$ are symmetric and closed.
So, for $1\leq i\leq n,$ $S(i)\cap (K_1\cup \{0\})$ as well as $S(i)\cap (K_2\cup \{0\})$ are symmetric closed subsets of $S(i)_{re}$.
Assume that  $S(i)\cap (K_1\cup \{0\})\neq \emptyset$ and  $S(i)\cap (K_2\cup \{0\})\neq \emptyset$.
Let $\a\in S(i)\cap (K_1\cup \{0\})$ and $\b \in S(i)\cap (K_2\cup \{0\})$. 
If $(\a,\b)\neq 0$, then by \cite[Lemma 3.7]{Yousofzadeh:Extended affine 2016}, either $\a-\b \in R$ or $\a+\b\in R$. 
Suppose that $0\neq \a+\b$. As $S(i)$ is closed and $S\setminus \{0\}\subseteq R_0\setminus \{0\}\subseteq R_{re}$, we get $\a+\b\in S(i)_{re}$, because $S(i)$ is closed.
It follows that $\a+\b\in K_1$ or $\a+\b \in K_2$. If $\a+\b\in K_1$, then 
$\b=(\a+\b)-\a\in K(1)$ (because $K_1$ is closed and symmetric), a contradiction. 
By a similar argument, state $\a+\b \in K_2$ arrives at a contradiction.
If $\a+\b=0$, then $\b=(\a+\b)-\a)\in K_1$, which is a contradiction. 
For the case where \( \alpha - \beta \in R \), we can make a contradiction in a similar way demonstrated above. Thus $(\a,\b)=0$. So, we can find elements $\dot \a$ and $\dot \b$ in $\dot s(i)$ that have the property $(\dot \a,\dot \b)=0$. It follows that $\dot s(i)$ is not irreducible, and this is a contradiction.
Taking everything into account, we have shown that either $S(i)\cap K_1=\emptyset$ or $S(i)\cap K_2=\emptyset$. 
 This leads us to the fact that 
   $S(i)$ is up-nilpotent hybrid or  down-nilpotent hybrid. \\
 We claim that if $k\geq 2$ and $S(i)$ is 
  	up-nilpotent  (resp. down-nilpotent) hybrid for some $i$, then $S(j)$ is up-nilpotent  (resp. down-nilpotent) hybrid for all $1\leq j \leq k$.  
To prove this, fix an $i\in \{1, \cdots , k\}$ and assume that $S(i)$ is an up-nilpotent hybrid (the other case can be proven similarly).
Suppose on the contrary that $j\neq i$ and $S(j)$ is a down-nilpotent hybrid subset. By Remark \ref{3.13}, there exists a functional $f_i$ on $\text{span}_\mathbb{R} S(i)$ with 
$$P_i = S(i)^+ \cup S(i)^\circ = S(i)^{ln}\cup-S(i)^{in}\cup\Bbb Z^{\geq 0}\delta$$ 
and $f_i (\delta) > 0$.
This implies that $S(i)^+\cap S_{re}\subseteq S^{ln}$ and 
$S(i)^-\cap S_{re}\subseteq S^{in}$. So, by Lemma \ref{3.14},
there exist  $p\in \mathbb{Z}$ and $\mu \in \text{supp}(M)$ such that
  	\begin{align}
  		(\mu +\mathbb{Z}^{>0}p\delta)\cap \text{supp}(M)= \emptyset. \label{II}
  	\end{align}
Let $\beta \in S(j)\cap S_{re}$. Since $S(j)$ is down-nilpotent hybrid, we choose $m > 0$ such that
  	$$\pm \beta -np\delta \in S(j)^{ln} \quad \text{and} \quad \pm \beta +np\delta \in S(j)^{in} \quad (n > m).$$ 
 Assume that $\mu+\beta -mp\delta \in \text{supp}(M)$. Then, as $-\beta +2mp\delta \in S(j)^{in}$, we have 
  	$$\mu+mp\delta= (\mu +\beta-mp\delta)-\beta+2mp\delta \in \text{supp}(M)$$
  	which is a contradiction due to \eqref{II}. Hence, 
\begin{equation}\label{L(j)1}
   \frak{L}(j)^{\beta -mp\delta} M^{\mu}=\{0\} 
\end{equation}
where $\frak{L}(j)$ is the affine Lie algebra up to a central space corresponding to $S(j)$.
On the other hand, since $\beta, \beta+2mp\delta \in S(j)\cap S_{re}$, the root string property\footnote{We refer the reader to \cite{Neher} or \cite [Proposition 3.8]{Yousofzadeh:Extended affine 2016} for more details.} for the affine root system $S(j)$ implies that $2mp\delta \in S(j)$.
So by \eqref{II}, we have 
\begin{equation}\label{L(j)2}
\frak{L}(j)^{2mp\delta} M^{\mu}=\{0\}.
\end{equation}
Suppose that $\b=\dot\b+k\d$ for some $k\in \Bbb Z$ and $0\neq x\in \frak{L}(j)^{\dot\b}$. Then, according to $\b$, we can choose $h\in\frak H$ in such a way that $\b(h)=1$\footnote{To find an appropriate $h$ concerning $\b$, use Table \ref{Table 2}, Table \ref{Table 3}, and the definition of $\frak H$ in Section 2.}, and hence
$$[x\otimes t^{l_\b}, h\otimes t^{2mp}]=-[x,h]\otimes t^{l_\b+2mp}=-\b(h)x\otimes t^{l_\b+2mp}=-x\otimes t^{l_\b+2mp}\neq 0.$$
This implies that $[\frak{L}(j)^{\beta-mp\delta}, \frak{L}(j)^{2mp\delta}]\neq 0$.
 By  the fact that 
 $[\frak{L}(j)^{\beta-mp\delta}, \frak{L}(j)^{2mp\delta}]\subseteq \frak{L}(j)^{\beta+mp\delta}$ and \cite[Proposition 2.5]{Yousofzadeh:Extended affine 2019}, we can conclude that $[\frak{L}(j)^{\beta-mp\delta}, \frak{L}(j)^{2mp\delta}]= \frak{L}(j)^{\beta+mp\delta}$.
  	Therefore, by (\ref{L(j)1}) and (\ref {L(j)2}), we have
  	$$\frak{L}(j)^{\beta+mp\delta} M^{\mu}=[\frak{L}(j)^{\beta-mp\delta}, \frak{L}(j)^{2mp\delta}] M^{\mu}=\{0\}.$$
  This contradicts the fact that $\beta +mp\delta \in S(j)^{in}$. Thus, $S(j)$ is up-nilpotent hybrid, and consequently all real roots of $S$ are
up-nilpotent hybrid.
\end{proof}

A subset $P$ of  $R$ is called {\it parabolic}, if 
$$(P+P)\cap R\subseteq P,~~R=P\cup -P.$$

\begin{Pro}\label{3.16}
	Keep the notation of Proposition \ref{3.12}. Suppose that each $S(i)$ is hybrid. Then each $P_i$ is a proper parabolic subset of $S(i)$.
\end{Pro}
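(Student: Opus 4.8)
The plan is to reduce the claim to the single linear functional that Remark \ref{3.13} attaches to each hybrid $S(i)$. Whether $S(i)$ is up-nilpotent or down-nilpotent hybrid, condition ($\ddagger$) of that remark supplies a map $\zeta\colon{\rm span}_{\Bbb R}S(i)\longrightarrow\Bbb R$ with $\zeta(\delta)>0$ for which
$$P_i=S(i)^+\cup S(i)^\circ=\{\alpha\in S(i)\mid\zeta(\alpha)\geq 0\},$$
where as usual $S(i)^{\pm}=\{\alpha\in S(i)\mid\pm\zeta(\alpha)>0\}$ and $S(i)^\circ=\{\alpha\in S(i)\mid\zeta(\alpha)=0\}$. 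Once $P_i$ is recognized as this nonnegative half-space, both parabolic axioms become one-line computations and properness reduces to exhibiting a single root on which $\zeta$ is negative.

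First I would verify closedness. If $\alpha,\beta\in P_i$ and $\alpha+\beta\in S(i)$, then linearity of $\zeta$ gives $\zeta(\alpha+\beta)=\zeta(\alpha)+\zeta(\beta)\geq 0$, so $\alpha+\beta\in P_i$; hence $(P_i+P_i)\cap S(i)\subseteq P_i$. Next I would check the covering condition $S(i)=P_i\cup(-P_i)$. Because $S(i)$ is symmetric and $\zeta(-\alpha)=-\zeta(\alpha)$, we have $-P_i=\{\alpha\in S(i)\mid\zeta(\alpha)\leq 0\}=S(i)^-\cup S(i)^\circ$, so that $P_i\cup(-P_i)=S(i)^+\cup S(i)^\circ\cup S(i)^-=S(i)$, the trichotomy of the sign of $\zeta$ exhausting all of $S(i)$.

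For properness I would invoke the imaginary direction. By step \textbf{S5} in the proof of Proposition \ref{3.12} we have $S(i)_\circ=\Bbb Z\delta$, so in particular $-\delta\in S(i)$; but $\zeta(-\delta)=-\zeta(\delta)<0$ forces $-\delta\notin P_i$. Therefore $P_i\neq S(i)$, and $P_i$ is a proper parabolic subset of $S(i)$.

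I expect the only genuinely delicate point to be the identification $P_i=\{\alpha\in S(i)\mid\zeta(\alpha)\geq 0\}$ itself, namely that the combinatorially defined $P_i$ (assembled from $S(i)^{ln}$, $-S(i)^{in}$, and the appropriate half of $\Bbb Z\delta$) really coincides with the half-space cut out by the $\zeta$ of Remark \ref{3.13}. This is precisely the content of ($\ddagger$), so the substantive work has been absorbed into that remark; what remains here, the two axioms together with properness, is routine once $\zeta$ is in hand.
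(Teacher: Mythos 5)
Your proof has a genuine circularity problem rather than a computational error. Everything rests on Remark \ref{3.13}($\ddagger$) --- the existence of a functional $\zeta$ on ${\rm span}_{\mathbb R}S(i)$ with $\zeta(\delta)>0$ and $P_i=S(i)^+\cup S(i)^\circ$ --- and, as you concede in your final paragraph, once that identification is granted the two parabolic axioms and properness are one-line checks. But that identification is not an independent input. Remark \ref{3.13} is imported ``with an imitation of'' \cite[Remark 5.5]{Yousofzadeh:Finite weight 2020}, and in that paper the remark is a \emph{consequence} of the statement you are asked to prove: the parabolicity of $P_i$ is established first (\cite[Lemma 5.4]{Yousofzadeh:Finite weight 2020}, the exact analogue of Proposition \ref{3.16}), and the base and the half-space functional are then obtained by applying the structure theory of parabolic subsets of affine root systems to $P_i$. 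Since a half-space description by a linear functional trivially implies closedness and covering, any honest justification of ($\ddagger$) must already contain a proof of Proposition \ref{3.16}; quoting it here inverts the logical order. The same dependence is visible inside the present paper: the proof of Proposition \ref{3.17}, which establishes precisely such a functional description for $P=P_1\cup\cdots\cup P_k$, explicitly invokes the closedness of $P$ (``elements of $\Pi$ belong to $P$, and $P$ is closed''), i.e.\ it uses Proposition \ref{3.16} as an ingredient, not the other way around.

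What the proposition actually requires --- and what the paper means by ``repeat the proof of \cite[Lemma 5.4]{Yousofzadeh:Finite weight 2020}'' --- is a direct verification from the definition $P_i=S(i)^{ln}\cup -S(i)^{in}\cup\mathbb Z^{\geq 0}\delta$ (up-nilpotent case; with $\mathbb Z^{\leq 0}\delta$ in the down-nilpotent case). The covering $S(i)=P_i\cup(-P_i)$ follows from the shadow property: every real root of $S(i)$ lies in $R^{ln}\cup R^{in}$, and the imaginary roots of $S(i)$ lie in $\mathbb Z\delta$. Closedness is the substantive point: sums of two roots of $S(i)^{ln}$ are handled by Theorem \ref{3.8}; sums of two roots of $-S(i)^{in}$ by additivity in $\frak C_M$ (Lemma \ref{3.2}(ii)); mixed sums and sums with $k\delta$ require the trichotomy of Theorem \ref{3.9}/Remark \ref{3.10} together with Proposition \ref{3.3}(ii) and the fact (Proposition \ref{3.15}) that all real roots of $S(i)$ are up-nilpotent hybrid or all are down-nilpotent hybrid. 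Properness also comes from the hybrid structure: for an up-nilpotent hybrid root $\eta$ one has $\eta-k\delta\in R^{in}$ for $k$ large while $-\eta+k\delta\in R^{ln}$, so $\eta-k\delta$ is a real root of $S(i)$ lying in neither $S(i)^{ln}$, nor $-S(i)^{in}$, nor $\mathbb Z^{\geq 0}\delta$. None of this can be bypassed by appealing to Remark \ref{3.13}.
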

\begin{proof}
	Repeat the proof of \cite[Lemma 5.4]{Yousofzadeh:Finite weight 2020}, and use Theorem \ref{3.8}, Proposition \ref{3.3}(ii), Remark \ref{3.10}	(Theorem \ref{3.9}).
\end{proof}

\begin{Pro}\label{3.17}
Keep the notation of Proposition \ref{3.12}.	Suppose that  $S$ is hybrid, and $P:=P_1\cup \cdots \cup P_k$. Then there is a linear functional $\zeta$ on 
	${\rm span}_{\Bbb R}S$ with $\zeta (\d)>0$ such that 
	$$P=\{\alpha \in S~|~\zeta (\alpha)\geq 0\},$$
	in particular, 
	$$\{\alpha\in S\cap R_{re}~|~\zeta (\alpha)>0\}\subseteq R^{ln},~~
	\{\alpha\in S\cap R_{re}~|~\zeta (\alpha)<0\}\subseteq R^{in}.$$	
\end{Pro}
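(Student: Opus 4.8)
The plan is to construct the functional $\zeta$ by gluing together the local functionals $f_i$ whose existence is guaranteed by Remark \ref{3.13}. By hypothesis each $S(i)$ is hybrid, so by Proposition \ref{3.15} each $S(i)$ is either up-nilpotent hybrid or down-nilpotent hybrid, and (since they are the irreducible components of the single hybrid set $S$) they are \emph{all} of the same type; say all are up-nilpotent hybrid, the other case being symmetric. For each $i$, Remark \ref{3.13}($\ddagger$) supplies a linear functional $f_i$ on $\mathrm{span}_{\Bbb R}S(i)$ with $f_i(\delta)>0$ and $P_i=S(i)^+\cup S(i)^\circ$, i.e. $P_i=\{\alpha\in S(i)\mid f_i(\alpha)\geq 0\}$. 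The first step is therefore purely a statement about how the $P_i$ fit together: I would record that $P_i$ is a parabolic subset of $S(i)$ by Proposition \ref{3.16}, so each $f_i$ genuinely cuts out $P_i$ as a closed half-space.

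The main work is to assemble a \emph{single} $\zeta$ on $\mathrm{span}_{\Bbb R}S$ that restricts (up to a positive scalar) to each $f_i$ on $\mathrm{span}_{\Bbb R}S(i)$. Here I would exploit the structure of $\dot S=\dot S(1)\sqcup\cdots\sqcup\dot S(k)$ as a disjoint union of \emph{mutually orthogonal} irreducible components inside the finite root system $\dot S$ (Proposition \ref{3.12}(i)). Because the components are pairwise orthogonal, $\mathrm{span}_{\Bbb R}\dot S=\bigoplus_i\mathrm{span}_{\Bbb R}\dot S(i)$ is an orthogonal direct sum, and the only overlap among the spaces $\mathrm{span}_{\Bbb R}S(i)$ is along the line $\Bbb R\delta$. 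The key consistency requirement is thus only on the $\delta$-direction: I normalize each $f_i$ by a positive scalar so that $f_i(\delta)$ is the same value for all $i$ (possible since each $f_i(\delta)>0$), and then define $\zeta$ on $\Bbb R\delta\oplus\bigoplus_i\mathrm{span}_{\Bbb R}\dot S(i)$ by setting $\zeta(\delta)$ to this common value and $\zeta|_{\mathrm{span}_{\Bbb R}\dot S(i)}$ equal to the $\dot S(i)$-component of the normalized $f_i$. Extending $\zeta$ arbitrarily (say by zero) to a complement of $\mathrm{span}_{\Bbb R}S$ inside the ambient space gives a well-defined linear functional with $\zeta(\delta)>0$ whose restriction to each $\mathrm{span}_{\Bbb R}S(i)$ agrees with a positive multiple of $f_i$. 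Consequently $\{\alpha\in S(i)\mid\zeta(\alpha)\geq 0\}=P_i$ for every $i$, and since $S=\bigcup_i S(i)$ (together with $\Bbb Z\delta$, which all $P_i$ share) and $P=\bigcup_i P_i$, this yields $P=\{\alpha\in S\mid\zeta(\alpha)\geq 0\}$.

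The final \emph{in particular} assertion is then immediate from the definition of $P_i$. For each $i$ the set $P_i$ contains $S(i)^{ln}$ and $-S(i)^{in}$, so on the open half-space $\{\zeta>0\}$ the real roots lie in $\bigcup_i S(i)^{ln}\subseteq R^{ln}$, while on $\{\zeta<0\}$ they lie in $\bigcup_i S(i)^{in}\subseteq R^{in}$; here I use that, by Theorem \ref{3.9} and Remark \ref{3.10}, each real root of $S(i)$ is classified as locally nilpotent or injective in a way compatible with its sign under $f_i$. I expect the main obstacle to be the \textbf{consistency of the gluing}: one must verify that demanding $\zeta$ restrict to a positive multiple of each $f_i$ imposes no contradictory constraint, and this is exactly where the mutual orthogonality of the components $\dot S(i)$ and the fact that $\delta$ is their sole common direction are essential. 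A secondary point requiring care is the uniformity of hybrid type across all $k$ components, which must be invoked from Proposition \ref{3.15} to ensure the signs of $f_i(\delta)$ can be made simultaneously positive; without it the half-spaces $P_i$ could point in incompatible $\delta$-directions and no single $\zeta$ with $\zeta(\delta)>0$ would exist.
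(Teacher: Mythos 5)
Your proposal is correct, but it follows a genuinely different route from the paper's proof. The paper uses only part ($\dagger$) of Remark \ref{3.13}: it takes the bases $\Pi_i=\{\a_{1,i},\dots,\a_{n_i,i},\d-\theta_i\}\subseteq P_i$, asserts that $\Pi:=\bigcup_i\Pi_i\setminus\{\d-\theta_2,\dots,\d-\theta_k\}$ is a basis of ${\rm span}_{\Bbb R}S$, defines $\zeta$ on $\Pi$ by an explicit four-case analysis (according to whether each $\d-\theta_i$ lies in $P\setminus -P$ or in $P\cap -P$, assigning $0$ to simple roots in $P\cap -P$ and calibrated positive values $1/(t_ir_{j,i})$ or $2/(t_ir_{j,i})$ to the rest), and then proves $P=S^+\cup S^\circ$ combinatorially -- one inclusion via closedness of $P$ (Proposition \ref{3.16}), the other via a minimal-height induction. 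You instead start from part ($\ddagger$) of Remark \ref{3.13}, which already exhibits each $P_i$ as $\{\a\in S(i)\mid f_i(\a)\geq 0\}$ for some $f_i$ with $f_i(\d)>0$, and reduce the proposition to a gluing argument in linear algebra: rescale so that all $f_i(\d)$ agree and assemble along ${\rm span}_{\Bbb R}S=\Bbb R\d\oplus\bigoplus_i{\rm span}_{\Bbb R}\dot S(i)$. This is shorter, eliminates both the case analysis and the height induction, and correctly isolates the only genuine multi-component issue, namely consistency along the $\d$-line; like the paper, you rest the uniformity of hybrid type on Proposition \ref{3.15} (the paper cites \ref{3.14} there, evidently a typo for \ref{3.15}). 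What the paper's construction buys in exchange is self-containedness and explicitness: it consumes only the existence of bases inside the $P_i$, not the stronger per-component half-space description ($\ddagger$), and it produces concrete values of $\zeta$ on simple roots. Two points in your write-up should be tightened, though both sit at the same level as steps the paper itself calls a ``straightforward check''. First, pairwise orthogonality of the $\dot S(i)$ alone does not force $\sum_i{\rm span}_{\Bbb R}\dot S(i)$ to be direct when the ambient form is indefinite; here directness holds because the form is nondegenerate on each ${\rm span}_{\Bbb R}\dot S(i)$ (each component lies wholly in the $\dot\epsilon$-span or the $\dot\delta$-span, where the form is definite, and in type $D(2,1;\lam)$ the spans $\Bbb R\dot\gamma_i$ are independent by construction) -- this is exactly the analogue of the paper's unproved claim that $\Pi$ is a basis of ${\rm span}_{\Bbb R}S$. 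Second, in the ``in particular'' step, placing a real root $\a$ with $\zeta(\a)>0$ in $R^{ln}$ needs the one-line exclusion: if instead $\a\in S(i)^{in}$, then $-\a\in -S(i)^{in}\subseteq P_i$ gives $\zeta(\a)\leq 0$, a contradiction; the case $\zeta(\a)<0$ is symmetric.
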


\begin{proof}
By Proposition \ref{3.14}, we can assume that $S$ is up-nilpotent hybrid, and so all $S(i)$ are up-nilpotent hybrid. According to Remark \ref{3.13} (in fact, using affine Lie theory), we can suppose that  $\Pi_i$  is a base  of $S(i)$ as follows:
\begin{align*}
    &\Pi_i =\{\a_{1,i},\a_{2,i},\cdots , \a_{n_i, i}, \d-\theta_i\}\subseteq P_i
\end{align*}
in which each 
\begin{align*}
    &B_i =\{\a_{1,i},\a_{2,i},\cdots , \a_{n_i, i} \}
\end{align*}
is the base of irreducible finite root system $\dot S(i)$ with certain properties,
and $\theta_i$ is the highest root of $\dot S(i)$ with respect to $B_i$. After a reindexing, we may assume that 
\begin{align*}
    &\a_{1,1},\a_{2,1},\cdots , \a_{t_1, 1},\a_{1,2},\a_{2,2}, \cdots, \a_{t_2, 2},\cdots, \a_{1,k},\a_{2,k}, \cdots, \a_{t_k, k}\in P\setminus -P, \\
     &\a_{t_1 +1,1},\a_{t_1+2,1},\cdots , \a_{n_1,1},\a_{t_2 +1,2},\a_{t_2+2,2}, \cdots, \a_{n_2,2},\cdots, \a_{t_k+1, k},\a_{t_k+2,k}, \cdots, \a_{n_k,k}\in P\cap -P.
\end{align*}
 Assume that 
$$\theta_i=\sum_{j=1}^{n_i}r_{j,i}\a_{j,i}$$
where each $r_{j,i}$ is a positive integer, because each $B_i$ is a base. We now proceed in 
the following sequence of cases to define a functional $\zeta$
on ${\rm span}_{\Bbb R}S$ (this gives us a triangular decomposition $S=S^+\cup S^\circ\cup S^-$), and at the end of these cases we show that
$$P=S^+\cup S^\circ=\{\alpha \in S~|~\zeta (\alpha)\geq 0\}.$$
It is a straightforward check that $\Pi:=\bigcup_{i=1}^k\Pi_i\setminus \{\d-\theta_2,\cdots, \d-\theta_k\}$ is a basis for 
${\rm span}_{\Bbb R}S$. Thus, it is enough to define the effect of the functional $\zeta$ on $\Pi$. Before defining $\zeta$, in each of the following cases, we note that $\zeta(\d)>0$ by Remark \ref{3.13}:

{\bf Case 1.} $\d-\theta_1, \cdots, \d-\theta_k\in P\cap -P$: Define 
$$\begin{cases}
    \zeta:{\rm span}_{\Bbb R}S \longrightarrow \Bbb R\\
    \quad \d-\theta_1\mapsto 0\\
    \quad \a_{j,1}\mapsto \dfrac{1}{t_1r_{j,1}},\quad 1\leq j\leq t_1\\
    \quad \a_{j,1} \mapsto 0, \quad t_1+1\leq j\leq n_1\\
\quad \a_{j,i}\mapsto \dfrac{1}{t_ir_{j,i}},\quad 2\leq i\leq k ,~ 1\leq j\leq t_i\\
\quad \a_{j,i}\mapsto 0,\quad 2\leq i\leq k ,~ t_i+1\leq j\leq n_i
\end{cases}$$
Now, for $i\neq 1$, we have 
\begin{align*}
&\zeta(\d-\theta_i)=\zeta(\d)-\zeta(\theta_i)= \zeta(\d-\theta_1)+\zeta(\theta_1)-\zeta(\theta_i)=0+1-1=0.
\end{align*}

{\bf Case 2.} $\d-\theta_1, \cdots, \d-\theta_k\in P\setminus -P$: Define 
$$\begin{cases}
    \zeta:{\rm span}_{\Bbb R}S \longrightarrow \Bbb R\\
    \quad \d-\theta_1\mapsto 1\\
    \quad \a_{j,1}\mapsto \dfrac{1}{t_1r_{j,1}},\quad 1\leq j\leq t_1\\
    \quad \a_{j,1} \mapsto 0, \quad t_1+1\leq j\leq n_1\\
\quad \a_{j,i}\mapsto \dfrac{1}{t_ir_{j,i}},\quad 2\leq i\leq k ,~ 1\leq j\leq t_i\\
\quad \a_{j,i}\mapsto 0,\quad 2\leq i\leq k ,~ t_i+1\leq j\leq n_i
\end{cases}$$
Now, for $i\neq 1$, we have 
\begin{align*}
&\zeta(\d-\theta_i)=\zeta(\d)-\zeta(\theta_i)= \zeta(\d-\theta_1)+\zeta(\theta_1)-\zeta(\theta_i)=1+1-1=0.
\end{align*}

{\bf Case 3.} $\delta-\theta_{1},  \delta-\theta_{l_1}, \cdots, \delta-\theta_{l_p} \in P\setminus-P$, and $\delta-\theta_{i} \in P\cap -P$ for all $i \in\{1, \cdots ,k\}\setminus\{1,l_1,\cdots, l_p\}$: Define 

$$\begin{cases}
    \zeta:{\rm span}_{\Bbb R}S \longrightarrow \Bbb R\\
    \quad \d-\theta_1\mapsto 1\\
    \quad \a_{j,1}\mapsto \dfrac{1}{t_1r_{j,1}},\quad 1\leq j\leq t_1\\
    \quad \a_{j,1} \mapsto 0, \quad t_1+1\leq j\leq n_1\\
\quad \a_{j,i}\mapsto \dfrac{1}{t_ir_{j,i}},\quad i\in\{l_1,\cdots, l_p\} ,~ 1\leq j\leq t_i\\
\quad \a_{j,i}\mapsto 0,\quad i\in\{l_1,\cdots, l_p\} ,~ t_i+1\leq j\leq n_i\\
\quad \a_{j,i}\mapsto \dfrac{2}{t_ir_{j,i}},\quad i\in
\{1,\cdots,k\}\setminus\{1,l_1,\cdots, l_p\} ,~ 1\leq j\leq t_i\\
\quad \a_{j,i}\mapsto 0,\quad i\in
\{1,\cdots,k\}\setminus\{1,l_1,\cdots, l_p\},~ t_i+1\leq j\leq n_i
\end{cases}$$
Now, for $i\in \{l_1,\cdots,l_p\}$, we have 
\begin{align*}
&\zeta(\d-\theta_i)=\zeta(\d)-\zeta(\theta_i)= \zeta(\d-\theta_1)+\zeta(\theta_1)-\zeta(\theta_i)=1+1-1=1
\end{align*}
and for $i\in
\{1,\cdots,k\}\setminus\{l_1,\cdots, l_p\}$
\begin{align*}
&\zeta(\d-\theta_i)=\zeta(\d)-\zeta(\theta_i)= \zeta(\d-\theta_1)+\zeta(\theta_1)-\zeta(\theta_i)=1+1-2=0.
\end{align*}

{\bf Case 4.} $\delta-\theta_{1},  \delta-\theta_{l_1}, \cdots, \delta-\theta_{l_p} \in P\cap -P$, and $\delta-\theta_{i} \in P\setminus -P$ for all $i \in\{1, \cdots ,k\}\setminus\{1,l_1,\cdots, l_p\}$: Define 

$$\begin{cases}
    \zeta:{\rm span}_{\Bbb R}S \longrightarrow \Bbb R\\
    \quad \d-\theta_1\mapsto 0\\
    \quad \a_{j,1}\mapsto \dfrac{2}{t_1r_{j,1}},\quad 1\leq j\leq t_1\\
    \quad \a_{j,1} \mapsto 0, \quad t_1+1\leq j\leq n_1\\
\quad \a_{j,i}\mapsto \dfrac{2}{t_ir_{j,i}},\quad i\in\{l_1,\cdots, l_p\} ,~ 1\leq j\leq t_i\\
\quad \a_{j,i}\mapsto 0,\quad i\in\{l_1,\cdots, l_p\} ,~ t_i+1\leq j\leq n_i\\
\quad \a_{j,i}\mapsto \dfrac{1}{t_ir_{j,i}},\quad i\in
\{1,\cdots,k\}\setminus\{1,l_1,\cdots, l_p\} ,~ 1\leq j\leq t_i\\
\quad \a_{j,i}\mapsto 0,\quad i\in
\{1,\cdots,k\}\setminus\{1,l_1,\cdots, l_p\},~ t_i+1\leq j\leq n_i
\end{cases}$$
Now, for $i\in \{1,l_1,\cdots,l_p\}$, we have 
\begin{align*}
&\zeta(\d-\theta_i)=\zeta(\d)-\zeta(\theta_i)= \zeta(\d-\theta_1)+\zeta(\theta_1)-\zeta(\theta_i)=0+2-2=0
\end{align*}
and for $i\in
\{1,\cdots,k\}\setminus\{l_1,\cdots, l_p\}$
\begin{align*}
&\zeta(\d-\theta_i)=\zeta(\d)-\zeta(\theta_i)= \zeta(\d-\theta_1)+\zeta(\theta_1)-\zeta(\theta_i)=0+2-1=1
\end{align*}
Now, we are ready to show that $P=S^+\cup S^\circ$. Let 
$\a\in S^+\cup S^\circ$. So, we can write $\a$ by some elements of $\Pi$ with positive integers coefficients, because $\Pi$ is a base for $S$.
However, elements of $\Pi$ belong to $P$, and $P$ is closed, which implies that $\a\in P$. Thus $S^+\cup S^\circ\subseteq P$.\\
Conversely, to show $P \subseteq S^+\cup S^\circ$, it is enough to prove that $P\cap S^-=\emptyset$. To see this, assume on the contrary that $P\cap S^-\not=\emptyset$. Then, we pick 
$\a=\sum_{\epsilon \in \Pi}k_\epsilon \epsilon \in P\cap S^-$.
Since $\zeta (\a)<0$, we get that all $k_\epsilon$ are negative integers. Thus, $-\sum k_\epsilon$ is a positive integer. So, we may pick $\a$ such that $-\sum k_\epsilon$ is minimal, i.e. minimal height. Suppose that $-\a\in \Pi$. Then, $-\a=\epsilon$ for some $\epsilon\in \Pi$. As $\zeta(\epsilon)=\zeta(-\a)>0$, we conclude from all the cases defined above for $\zeta$ that 
$\epsilon \in P\setminus -P$. On the other hand, $-\epsilon=\a\in P$, and hence $\epsilon\in -P$. This is a contradiction; consequently, $-\a \not \in \Pi$.
This allows us choosing $\epsilon_0\in \Pi$ such that $\a+\epsilon_0\in S$. It is clear that $\a+\epsilon_0\in S^-\cap P$ and has a smaller height than $\a$, a contradiction. 
Thus $P\cap S^-=\emptyset$ and so
$$P=S^+\cup S^\circ=\{\alpha \in S~|~\zeta (\alpha)\geq 0\}.$$
A straightforward check proves the rest statements.
\end{proof}

It can be seen in Section 2 that the even part of an untwisted affine Lie superalgebra contains two 
or three 
affine Lie subalgebras (in fact, type $D(2,1;\a)^{(1)}$ has three affine Lie subalgebras). We denote by $R(1)$, 
$R(2)$, 
and $R(3)$ the corresponding root system of them.
We set 
$$R(i)^{ln}:=R(i)\cap R^{ln},~~ R(i)^{in}:=R(i)\cap R^{in}.$$ 
Whatever comes for the results and proofs in the following, we consider the following posit:
\begin{equation}\label{convention}
\parbox{5 in}{
{\it We sometimes use the notion $\{i,j,k\}=\{1,2,3\}$.
We assume the status $3$ when the even part of the affine Lie superalgebra has three affine Lie subalgebras.
In the types where the zero part contains two affine Lie subalgebras, 
we can extract the desired one by adapting and reindexing $i$, $j$, and $k$}.}
\end{equation}
Notice that $R(i)\subseteq R_0$, and each $R(i)$  is a symmetric closed subset of $R$, and hence 
$${\mathfrak L}(i):=\bigoplus_{\alpha \in R(i)}\mathfrak L^\alpha$$
is a subsuperalgebra of $\mathfrak L$.

In the rest of this section, we suppose that the $\frak L$-module $M$ is simple and has a finite weight space decomposition with respect to $\frak H$.
We note that $M$ has shadow by Proposition \ref{3.6}. We should also note that in the following proofs, the fact that 
\begin{equation}\label{fact}
    \parbox{5 in}{\it if $\frak k$ is a subalgebra of $\frak L$ with root system $T$, then 
    $T_{re}=T^{ln}\cup T^{in}$, $T^{ln}=\frak B_M\cap T_{re}$ and $R^{in}=\frak C\cap R_{re}$}
\end{equation}
is used.

\begin{Cor}\label{3.18}
Suppose that $i\in \{1,2,3\}$ (recalling (\ref{convention})) and $R(i)=R(i)^+\cup R(i)^0\cup R(i)^-$ is a  triangular decomposition for $R(i)$ with corresponding linear functional $\zeta$ such that 
	$\zeta (\delta)>0$, $R(i)^+\cap R_{re}\subseteq R^{ln}$ and 
	$R(i)^-\cap R_{re}\subseteq R^{in}$. Assume that $W$ is an $\frak L_0$-submodule of $M$. Then there is a positive integer $p$ and $\lambda \in {\rm supp}(W)$ with $(\lambda +\Bbb Z^{>0}p\delta)\cap {\rm supp}(W)=\emptyset$.
\end{Cor}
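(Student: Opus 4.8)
The plan is to recognize Corollary \ref{3.18} as nothing more than the specialization of Lemma \ref{3.14} to the particular choice $S = R_0$. First I would verify that $R_0$ is an admissible input for the machinery of Proposition \ref{3.12}: it is nonempty, it is symmetric since $-R_0 = R_0$, and it is closed because the sum of two even roots is again even, so that $(R_0 + R_0) \cap R \subseteq R_0$. Moreover $R_0 \cap R_{re} \neq \emptyset$ by Tables \ref{Table 2} and \ref{Table 4}, and the standing hypothesis of Proposition \ref{3.12}, namely that $(\dot\alpha + \Bbb Z\delta) \cap R_0 \subseteq S$ for every $0 \neq \dot\alpha \in \dot S$, holds trivially once $S = R_0$.

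With these checks in place, Proposition \ref{3.12} applies and tells us that $\dot{R_0}$ is a finite root system with irreducible components $\dot{R_0}(1), \ldots, \dot{R_0}(k)$, and that the associated sets $S(i) := (\dot S(i) + \Bbb Z\delta) \cap R_0$ are exactly the root systems $R(i)$ of the affine Lie subalgebras of the even part $\frak L_0$ (with $k \leq 3$, in accordance with the convention (\ref{convention})). Under this identification one has $S_{re} = R_0 \cap R_{re}$, while $S^{ln}$ and $S^{in}$ restrict to $R^{ln}$ and $R^{in}$ on the even real roots; in particular the sets $R(i)^{ln} = R(i) \cap R^{ln}$ and $R(i)^{in} = R(i) \cap R^{in}$ defined just above the corollary coincide with $S(i)^{ln}$ and $S(i)^{in}$.

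It then remains only to observe that the hypotheses of the corollary translate verbatim into those of Lemma \ref{3.14}: the triangular decomposition $R(i) = R(i)^+ \cup R(i)^0 \cup R(i)^-$ with $\zeta(\delta) > 0$ becomes $S(i) = S(i)^+ \cup S(i)^\circ \cup S(i)^-$, the conditions $R(i)^+ \cap R_{re} \subseteq R^{ln}$ and $R(i)^- \cap R_{re} \subseteq R^{in}$ become $S(i)^+ \cap S_{re} \subseteq S^{ln}$ and $S(i)^- \cap S_{re} \subseteq S^{in}$, and $W$ is an $\frak L_0$-submodule of $M$ in both formulations. Invoking Lemma \ref{3.14} then produces the positive integer $p$ and the weight $\lambda \in {\rm supp}(W)$ with $(\lambda + \Bbb Z^{>0} p\delta) \cap {\rm supp}(W) = \emptyset$, which is precisely the desired conclusion. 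I expect no genuine obstacle here beyond the bookkeeping of notations; the only point warranting care is confirming that the general symmetric-closed-subset setup of Proposition \ref{3.12}, fed the concrete input $S = R_0$, reproduces exactly the decomposition of $\frak L_0$ into its affine Lie subalgebras with root systems $R(i)$, so that Lemma \ref{3.14} may be applied with no further argument.
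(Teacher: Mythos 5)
Your proposal is correct and follows essentially the same route as the paper: the paper's proof of Corollary \ref{3.18} is simply ``use (\ref{fact}) and apply Lemma \ref{3.14}'', which is exactly your reduction, with the specialization $S=R_0$ in the setting of Proposition \ref{3.12} so that the components $S(i)$ become the root systems $R(i)$ of the affine Lie subalgebras of $\frak L_0$. Your write-up merely makes explicit the bookkeeping (symmetry and closedness of $R_0$, the identification $S(i)=R(i)$, and the translation of the shadow conditions) that the paper leaves implicit.
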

\begin{proof}
Use (\ref{fact}) and apply Lemma \ref{3.14}.
\end{proof}

\begin{Cor}\label{3.19}
	Suppose that $i,j\in \{1,2,3\}$ and $i\neq j$ (recalling (\ref{convention})).
	Then the following statements hold.
	\begin{itemize}
		\item [(i)] If $R(i)$ 
		is up-nilpotent hybrid, then $R(j)$ is either tight or up-nilpotent hybrid.
		\item [(ii)] If $R(i)$ 
		is down-nilpotent hybrid, then $R(j)$ is either tight or down-nilpotent hybrid.
	\end{itemize}
Moreover, if all $R(i)$ are hybrid, then the fact that one of the $R(i)$s is up-nilpotent (resp. down-nilpotent) causes the others to be up-nilpotent (resp. down-nilpotent). 
\end{Cor}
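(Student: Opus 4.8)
My plan is to prove (i) by contradiction, transplanting the closing argument in the proof of Proposition \ref{3.15}; then to deduce (ii) from (i) by the symmetry $i\leftrightarrow j$; and finally to read off the ``moreover'' clause by combining (i) and (ii) with the standing hybridity hypothesis. Throughout I use that $M$ is simple with finite weight spaces, hence has shadow by Proposition \ref{3.6}, that each $R(i)\subseteq R_0$ is symmetric and closed, and that the fact \eqref{fact} lets me read the injective/locally nilpotent behaviour inside each $R(i)$ off of $R^{in},R^{ln}$.

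For (i) I would assume $R(i)$ up-nilpotent hybrid and, aiming at a contradiction, assume $R(j)$ down-nilpotent hybrid. By Remark \ref{3.13} there is a linear functional $\zeta$ on ${\rm span}_{\Bbb R}R(i)$ whose triangular decomposition $R(i)=R(i)^+\cup R(i)^0\cup R(i)^-$ satisfies $\zeta(\delta)>0$, $R(i)^+\cap R_{re}\subseteq R^{ln}$ and $R(i)^-\cap R_{re}\subseteq R^{in}$. Feeding this into Corollary \ref{3.18} with $W=M$ produces $p\in\Bbb Z^{>0}$ and $\mu\in{\rm supp}(M)$ with $(\mu+\Bbb Z^{>0}p\delta)\cap{\rm supp}(M)=\emptyset$.

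Next I would fix $\beta\in R(j)\cap R_{re}$ and, using that $R(j)$ is down-nilpotent hybrid, pick $m>0$ with $\pm\beta-np\delta\in R(j)^{ln}$ and $\pm\beta+np\delta\in R(j)^{in}$ for all $n>m$. The spectral gap above $\mu$ then forces $\frak L(j)^{\beta-mp\delta}M^\mu=\{0\}$; moreover $2mp\delta\in R(j)$ by the affine root-string property (since $\beta,\beta+2mp\delta\in R(j)\cap R_{re}$), and the same gap gives $\frak L(j)^{2mp\delta}M^\mu=\{0\}$. The bracket computation together with \cite[Proposition 2.5]{Yousofzadeh:Extended affine 2019} yields $[\frak L(j)^{\beta-mp\delta},\frak L(j)^{2mp\delta}]=\frak L(j)^{\beta+mp\delta}$, whence $\frak L(j)^{\beta+mp\delta}M^\mu=\{0\}$ --- contradicting $\beta+mp\delta\in R(j)^{in}$. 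So $R(j)$ is not down-nilpotent hybrid, i.e. it is tight or up-nilpotent hybrid. Statement (ii) is then the image of (i) under $i\leftrightarrow j$: the contrapositive of (i) is ``$R(j)$ down-nilpotent hybrid $\Rightarrow R(i)$ not up-nilpotent hybrid'', and relabeling gives (ii). For the moreover clause, if all $R(i)$ are hybrid and some $R(i_0)$ is up-nilpotent, then (i) makes every other $R(j)$ tight or up-nilpotent hybrid; being hybrid it is not tight, so it is up-nilpotent hybrid, and the down-nilpotent case is identical via (ii).

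The hard part will be checking that the closing argument of Proposition \ref{3.15} really transfers: there it ran under the global assumption that $S$ is hybrid, so I must confirm it uses only the up-nilpotency of $R(i)$ (to build $\zeta$, the gap, and $\mu$) and the down-nilpotency of $R(j)$ (to choose $m$ and derive the contradiction), with $M$ itself serving as the submodule $W$ via \eqref{fact} and the shadow property. I would also need to confirm that each affine component $R(j)$ is coherently of a single type, so that ``not down-nilpotent hybrid'' genuinely means ``tight or up-nilpotent hybrid''; this coherence is the structural input obtained by applying Theorem \ref{3.9} (through Remark \ref{3.10}) and Proposition \ref{3.15} to the affine subalgebra $\frak L(j)$. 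Once these points are secured, the remainder is bookkeeping.
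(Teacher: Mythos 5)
Your proposal is correct and takes essentially the same route as the paper: the paper's entire proof of this corollary is ``Use (\ref{fact}) and apply Proposition \ref{3.15}'', and what you write out is precisely the closing claim of the proof of Proposition \ref{3.15} (the functional from Remark \ref{3.13}, the spectral gap from Corollary \ref{3.18} applied with $W=M$, the root-string property, and the bracket identity via \cite[Proposition 2.5]{Yousofzadeh:Extended affine 2019}) transplanted to $R(i)$ and $R(j)$, together with the same contrapositive/relabeling and shadow considerations the paper leaves implicit. The coherence caveat you flag --- that each component $R(j)$ must be of a single type for ``not down-nilpotent hybrid'' to yield ``tight or up-nilpotent hybrid'' --- is resolved by you exactly as the paper resolves it (through Theorem \ref{3.9}, Remark \ref{3.10} and Proposition \ref{3.15}), so your treatment matches the paper's argument and level of rigor.
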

\begin{proof}
Use (\ref{fact}) and apply Proposition \ref{3.15}.
\end{proof}

We conclude the note with the following observation. 

\begin{Cor}\label{3.20}
	Suppose that  $R(1)$, $R(2)$ and $R(3)$ are hybrid (recalling (\ref{convention})), and $P:=P_1\cup P_2\cup P_3$. Then there is a linear functional $\zeta$ on 
	${\rm span}_{\Bbb R}R_0$ (with $\zeta(\d)>0$) such that 
	$$P=\{\alpha \in R_0~|~\zeta (\alpha)\geq 0\},$$
	in particular, 
	$$\{\alpha\in R_0~|~\zeta (\alpha)>0\}\subseteq R^{ln},~~
	\{\alpha\in R_0~|~\zeta (\alpha)<0\}\subseteq R^{in}.$$	
\end{Cor}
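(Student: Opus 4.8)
The plan is to recognize this statement as the special case $S=R_0$ of Proposition~\ref{3.17}, so that the desired functional $\zeta$ is produced by that result and no new construction is needed. First I would verify that $S:=R_0$ meets the running hypotheses of Proposition~\ref{3.12}, whose notation Proposition~\ref{3.17} inherits. The even part $R_0$ is symmetric, since $R_0=-R_0$ is visible in Table~\ref{Table 4}, and it is closed because the sum of two even roots that happens to be a root is again even, so $(R_0+R_0)\cap R\subseteq R_0$. Moreover $(R_0)_{re}=R_{re}\cap R_0\neq\emptyset$, as already observed from Table~\ref{Table 2} and Table~\ref{Table 4}, and the defining condition ``$(\dot\alpha+\mathbb{Z}\delta)\cap R_0\subseteq S$'' for $0\neq\dot\alpha\in\dot S$ holds trivially once $S=R_0$.

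Next I would match up the data. By the discussion preceding the corollary, the even part $R_0$ is the root system of the two or three affine Lie subalgebras sitting inside $\frak L_0$, and these are precisely the affine algebras attached by Proposition~\ref{3.12} to the irreducible components $\dot S(1),\dots,\dot S(k)$ of $\dot S$ (here $\dot S$ is computed with $S_0=R_0$, so it consists of the projections of the finite even roots, whose irreducible components are read off from Table~\ref{Table 1}). Hence $S(i)=R(i)$ and $\mathrm{span}_{\mathbb{R}}S=\mathrm{span}_{\mathbb{R}}R_0$. The hypothesis that $R(1),R(2),R(3)$ are hybrid is then exactly the hypothesis that $S=R_0$ is hybrid in the sense of Proposition~\ref{3.17}, with the convention~(\ref{convention}) absorbing the difference between the cases $k=2$ and $k=3$.

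Finally I would invoke Proposition~\ref{3.17} verbatim: it furnishes a linear functional $\zeta$ on $\mathrm{span}_{\mathbb{R}}S=\mathrm{span}_{\mathbb{R}}R_0$ with $\zeta(\delta)>0$ and $P=P_1\cup\cdots\cup P_k=\{\alpha\in R_0\mid \zeta(\alpha)\geq 0\}$, and the two ``in particular'' containments $\{\alpha\in R_0\mid\zeta(\alpha)>0\}\subseteq R^{ln}$ and $\{\alpha\in R_0\mid\zeta(\alpha)<0\}\subseteq R^{in}$ are exactly the concluding inclusions of that proposition. The only point requiring any care is the bookkeeping identification $S(i)=R(i)$ together with the correct count of irreducible components; since everything else is a direct transfer from Proposition~\ref{3.17}, I do not expect a genuine obstacle here, and this identification is the step I would write out most explicitly.
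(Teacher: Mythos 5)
Your proposal is correct and coincides with the paper's own proof: the paper disposes of Corollary \ref{3.20} simply by citing the shadow-restriction fact (\ref{fact}) and applying Proposition \ref{3.17}, which is exactly your specialization $S=R_0$ (so that $S(i)=R(i)$, $k\in\{2,3\}$ per (\ref{convention}), and $P=P_1\cup\cdots\cup P_k$). The hypothesis verification for Proposition \ref{3.12} and the identification $S(i)=R(i)$ that you write out are precisely the details the paper leaves implicit.
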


\begin{proof}
Use (\ref{fact}) and apply Proposition \ref{3.17}.
\end{proof}

\end{document}